\newtheorem{thm}{Theorem}
\newtheorem{defn}{Definition}
\newtheorem{fact}{Fact}
\newtheorem{lem}[thm]{Lemma}
\newtheorem{prop}[thm]{Proposition}
\newtheorem{cor}[thm]{Corollary}
\newtheorem{question}{Question}
\newtheorem{case}{Case}
\newcommand{\up}{\upharpoonright}
\newcommand{\ra}{\rightarrow}
\newcommand{\mc}{\mathcal}
\newcommand{\ms}{\mathscr}
\newcommand{\sm}{^\smallfrown}
\newcommand{\lo}{<_{ho}}
\newcommand{\m}{MA$_{\omega_1}(S)[S]$ }
\begin{document}

  \title{MA$_{\omega_1}(S)[S]$ does not imply $\mathcal{K}_2$}
  \author{Yinhe Peng}
  \address{Academy of Mathematics and Systems Science, Chinese Academy of Sciences\\ East Zhong Guan Cun Road No. 55\\Beijing 100190\\China}
\email{pengyinhe@amss.ac.cn}

  \author{Liuzhen Wu}
  \address{Academy of Mathematics and Systems Science, Chinese Academy of Sciences\\ East Zhong Guan Cun Road No. 55\\Beijing 100190\\China}
\email{lzwu@math.ac.cn}

\thanks{Peng was partially supported by NSFC No. 11901562 and a program of the Chinese Academy of Sciences. Wu was partially supported by NSFC No. 11871464.}

\subjclass[2010]{03E02, 03E35, 03E65}
\keywords{MA$_{\omega_1}$, $\ms{K}_2$, $\mc{K}_2$,  partition relation, complete coherent Suslin tree,  MA$_{\omega_1}(S)$, MA$_{\omega_1}(S)[S]$}

  \begin{abstract}
  We construct a model in which MA$_{\omega_1}(S)[S]$ holds and $\mathcal{K}_2$ fails. This shows that MA$_{\omega_1}(S)[S]$ does not imply $\mathcal{K}_2$ and answers an old question of Larson and Todorcevic in \cite{lt}. We also investigate different strong colorings in models of MA$_{\omega_1}(S)[S]$.

  \end{abstract}
  
    \maketitle

  \section{Introduction}
  The method of iterated forcing was introduced   by Solovay and Tennenbaum in \cite{ma} to prove the consistency of no Suslin trees. Then Martin formulated an axiom MA$_{\omega_1}$ whose consistency follows from Solovay-Tennenbaum's iterated forcing method.
  
  MA$_{\omega_1}$ is the following statement:
  \begin{itemize}
  \item If $\mc{P}$ is a c.c.c. poset and $\{D_\alpha: \alpha<\omega_1\}$ is a collection of dense subsets of $\mc{P}$, then there is a filter $G\subset \mc{P}$ such that $G\cap D_\alpha\neq \emptyset$ for any $\alpha<\omega_1$.
  \end{itemize}
  A partial order $\mc{P}$ has the \emph{countable chain condition} (c.c.c.)  if every pairwise incompatible subset of $\mc{P}$ is at most countable.

Since formulated, MA$_{\omega_1}$ plays an important role in proving consistency, distinguishing properties, etc. For example,    Galvin observes that MA$_{\omega_1}$ implies the failure of Pr$_1(\omega_1, 2, \omega)$ whose consistency follows from $\mathfrak{b}=\omega_1$ (see \cite{st89}). For $\theta\leq \omega$, Pr$_1(\omega_1, \kappa, \theta)$ is the following statement:
\begin{itemize}
  \item (\cite{ss94}) There is a function $c:[\omega_1]^2\rightarrow \kappa$ such that whenever we are given $n<\theta$, a collection $\langle a_\alpha: \alpha< \omega_1\rangle$ of pairwise disjoint elements of $[\omega_1]^{n}$ and  $\eta<\kappa$,   there are $\alpha< \beta$ such that $c( a_\alpha(i), a_\beta(j))=\eta$ for any $i, j<n$.
  \end{itemize}
  Here $a(i)$ is the $i$th element of $a$ in the increasing enumeration for $a\in [\omega_1]^n$ and $i<n$.
  It turns out that $\omega$ is the least possible value since Pr$_1(\omega_1, \omega_1, n)$ holds in ZFC for any $n<\omega$ by \cite{pw}. But this leaves open the equivalence among properties between MA$_{\omega_1}$ and the failure of Pr$_1(\omega_1, 2, \omega)$ (see, e.g., \cite{tv}).
  
  For example, a property that is closely related to both combinatorics and forcing was investigated in \cite{tv}. %A coloring $c: [\omega_1]^2\ra 2$ is \emph{c.c.c. destructible}  (see \cite{tv}) if $\omega_1$ is a countable union of 0-homogeneous sets in a c.c.c. forcing extension. 
  $\ms{K}_2$, introduced in \cite{tv}, is the assertion that one of the following two equivalent statements holds:
  \begin{itemize}
  \item Every partition $ [\omega_1]^2=K_0\cup K_1$ has an uncountable 0-homogeneous set whenever it has an uncountable 0-homogeneous set in a c.c.c. forcing extention.
  \item Every c.c.c. poset has property (K), i.e., every uncountable subset of a c.c.c. poset has an uncountable subset that is pairwise compatible.
  \end{itemize}
  
  Clearly, MA$_{\omega_1}\Rightarrow \ms{K}_2\Rightarrow \neg$Pr$_1(\omega_1, 2,\omega)$. And a test question for strength of properties between MA$_{\omega_1}$ and $\neg\mathrm{Pr}_1(\omega_1, 2, \omega)$ is the following question which was conjectured in \cite{tv} to have a negative answer.
  \begin{question}\label{q1}
  Does $\ms{K}_2$ imply $\mathrm{MA}_{\omega_1}$?
  \end{question}
  A strategy to distinguish properties between MA$_{\omega_1}$ and $\neg\mathrm{Pr}_1(\omega_1, 2, \omega)$ has been known  for more than twenty years. That is, to find a property with large fragments of MA$_{\omega_1}$ that does not imply MA$_{\omega_1}$ --- MA$_{\omega_1}(S)[S]$ (see \cite{lt}) or PFA$(S)[S]$ (see \cite{PFAS}). Then try to prove that the large fragments of  MA$_{\omega_1}$ imply, e.g.,  $\ms{K}_2$. Note that MA$_{\omega_1}$ fails in any forcing extension over a Suslin tree $S$ since $\Vdash_S \mathfrak{t}=\omega_1$.
  
  For a Suslin tree $S$, MA$_{\omega_1}(S)$ is the following assertion (see \cite{lt}):
  \begin{itemize}
  \item for any c.c.c. poset $\mathcal{P}$ with $\Vdash_\mc{P} ``S$ is Suslin'', for any collection $\{D_\alpha: \alpha<\omega_1\}$ of dense   subsets of $\mc{P}$, there is a filter $G\subset \mc{P}$ meeting them all.
  \end{itemize}
MA$_{\omega_1}(S)[S]$ holds if (see \cite{ltall}) the universe is a forcing extension by $S$ over a model of MA$_{\omega_1}(S)$ where $S$ is the Suslin tree. PFA$(S)$ and PFA$(S)[S]$ are defined in an analogous way.

The strategy may have potential to reveal more connections between properties concerning combinatorics and/or forcing on $\omega_1$. A general test question for the strategy which has its own interest is: what combinatorial properties follow from MA$_{\omega_1}(S)[S]$. One example is the pure combinatorial property $\mc{K}_2$ introduced in \cite{lt}.
\begin{itemize}
\item (\cite{st89}) A partition $[\omega_1]^2=K_0\cup K_1$ is a \emph{c.c.c. partition} if every uncountable family of finite 0-homogeneous sets contains two members whose union is also 0-homogeneous.
\item (\cite{lt}) $\mc{K}_2$ is the assertion that every c.c.c. partition of $[\omega_1]^2$ has an uncountable 0-homogeneous subset.
\end{itemize}
Clearly, MA$_{\omega_1}\Rightarrow \ms{K}_2\Rightarrow \mc{K}_2 \Rightarrow \neg$Pr$_1(\omega_1, 2,\omega)$.

A   question to test the strategy mentioned above and   to find out more combinatorial consequences of MA$_{\omega_1}(S)[S]$, is the following question asked by Larson and Todorcevic in \cite{lt}.
\begin{question}[\cite{lt}]\label{q2}
Does $\mathrm{MA}_{\omega_1}(S)[S]$ imply $\mc{K}_2$?\footnote{$\mathrm{MA}_{\omega_1}(S)$ was called $\mathrm{SA}_{\omega_1}$ in \cite{lt}.}
\end{question}

We answer Question \ref{q2} negatively in Section 4.
\begin{thm}\label{thm1}
$\mathrm{MA}_{\omega_1}(S)[S]$ is consistent with $\mathrm{Pr}_1(\omega_1, 2,\omega)$. In particular, $\mathrm{MA}_{\omega_1}(S)[S]$ does not imply $\mc{K}_2$.
\end{thm}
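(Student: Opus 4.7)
The plan is to build the desired model as an $\omega_2$-stage c.c.c.\ iteration followed by a final forcing with a coherent Suslin tree $S$, along the $\mathrm{MA}_{\omega_1}(S)[S]$ template of \cite{lt}, but carrying along a single coloring $c: [\omega_1]^2 \to 2$ engineered so that in the final $S$-extension it witnesses $\mathrm{Pr}_1(\omega_1, 2, \omega)$. I would start from a ground model of CH in which a complete coherent Suslin tree $S$ has been fixed, viewing the nodes of $S$ at level $\alpha$ as functions from $\alpha$ to $\omega$ that pairwise agree off a finite set; the $S$-generic branch is then a function $\dot b : \omega_1 \to \omega$. The design of $c$ should be arranged so that each value $c(\alpha, \beta)$ can be recovered, after forcing with $S$, from finite data read off $\dot b$ through the coherence maps of $S$. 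This makes ``$c$ witnesses $\mathrm{Pr}_1$ in $V^S$'' a $\mc{P}\ast S$-forcing statement amenable to an inductive preservation analysis.

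With $c$ in place, I would run the standard bookkeeping iteration $\langle \mc{P}_\alpha, \dot{\mc{Q}}_\alpha : \alpha < \omega_2 \rangle$ hitting every c.c.c.\ poset of size at most $\aleph_1$ that preserves $S$, maintaining the double invariant that $S$ is still Suslin and that the statement ``for every name for an uncountable sequence $\langle \dot a_\xi : \xi < \omega_1 \rangle$ of pairwise disjoint finite subsets of $\omega_1$ and every $\eta < 2$, the $S$-extension contains $\xi < \zeta$ with $c$ constantly $\eta$ on $\dot a_\xi \times \dot a_\zeta$'' continues to hold. Granted this invariant, the composite forcing $\mc{P}_{\omega_2}\ast S$ produces a model of $\mathrm{MA}_{\omega_1}(S)[S] \wedge \mathrm{Pr}_1(\omega_1, 2, \omega)$, and Theorem~\ref{thm1} follows via the implication $\mc{K}_2 \Rightarrow \neg\mathrm{Pr}_1(\omega_1, 2, \omega)$ recorded in the introduction.

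The main obstacle is the preservation lemma at successor stages: if $\dot{\mc{Q}}$ is c.c.c.\ and preserves $S$ in $V^{\mc{P}_\alpha}$, then the above ``$\mathrm{Pr}_1$ after $S$'' property for $c$ must survive $\dot{\mc{Q}}$. My plan is to fix a putative counterexample $\mc{Q}\ast S$-name $\langle \dot a_\xi : \xi < \omega_1 \rangle$ together with a forbidden color $\eta$, apply a $\Delta$-system refinement inside the c.c.c.\ iterate $\mc{Q}\ast S$ to stabilise the $\mc{Q}$-side conditions, and then exploit the coherence of $S$ — nodes at a common level differ only finitely — to show that the finite data deciding $c \upharpoonright \dot a_\xi \times \dot a_\zeta$ is already determined by conditions in $\mc{P}_\alpha \ast S$. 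This would reduce a counterexample at stage $\alpha+1$ to one already present at stage $\alpha$, contradicting the induction hypothesis. Executing this reduction while respecting the $S$-preservation of $\dot{\mc{Q}}$ is the delicate point, and forces a careful initial choice of $c$ so that the finite data computing $c$-values meshes with the class of $S$-preserving c.c.c.\ posets; this interlock of $c$ with the generic branch of $S$, rather than the iteration bookkeeping itself, is the technical heart of the argument.
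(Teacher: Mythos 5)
Your overall architecture (ground model with a complete coherent Suslin tree $S$ and a coloring, an $\omega_2$-stage finite support c.c.c.\ iteration, a final forcing with $S$, and the reduction of the theorem to $\mathrm{Pr}_1(\omega_1,2,\omega)$ via $\mc{K}_2\Rightarrow\neg\mathrm{Pr}_1(\omega_1,2,\omega)$) matches the paper. But the invariant you propose to carry through the iteration is essentially the paper's property (I) --- the direct ``$c$ witnesses $\mathrm{Pr}_1$ after forcing with $S$'' statement --- and the paper explicitly flags that this property, while true in the ground model, is not the right thing to preserve. The gap is at the successor stage. Your plan is to take a name for a counterexample in $V^{\mc{P}_\alpha\ast\dot{\mc{Q}}}$, stabilise it by a $\Delta$-system, and use coherence of $S$ to push the counterexample down to $V^{\mc{P}_\alpha}$. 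This does not work for an arbitrary $S$-preserving c.c.c.\ poset $\dot{\mc{Q}}$: a counterexample is an uncountable family with \emph{no} good pair, and deciding its members by conditions $p_\xi$ only helps if you can find a good pair $(\xi,\zeta)$ with $p_\xi$ and $p_\zeta$ compatible. A $\Delta$-system gives you that compatibility for free only at limit stages (where conditions are finite partial functions on the iteration length); at a successor stage $\dot{\mc{Q}}$ can be engineered so that $p_\xi\perp p_\zeta$ precisely when the decided values form a good pair, i.e.\ $\dot{\mc{Q}}$ can be the c.c.c.\ poset shooting an uncountable subfamily omitting the colour $\eta$. The only leverage against such a $\mc{Q}$ is its $S$-preservation, and converting that into a contradiction requires producing an uncountable antichain of $\mc{Q}\times S$, which your sketch never does.

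This is exactly what the paper's central device supplies and what is missing from your proposal: the strengthened invariant $\varphi_0(c)$, a ``half-homogeneity'' statement about uncountable families in $\mc{C}^0$ (finite sets split by a stem into two sides, with pairs required to be $j$-homogeneous on one side and $(1-j)$-homogeneous on the other), together with the auxiliary posets $\mc{Q}_{\langle x^i_\alpha\rangle}$ that force uncountable half-homogeneous sets. Half-homogeneity is calibrated so that (a) it follows from $\mathrm{Pr}_0(S,2,\omega)$ in the ground model, (b) forcing an uncountable half-homogeneous set is c.c.c.\ and preserves $\varphi_0(c)$ (so these auxiliary posets can be woven into the bookkeeping), and (c) if some c.c.c.\ $S$-preserving $\mc{Q}$ killed $\varphi_0(c)$, a previously forced auxiliary generic combines with the counterexample to yield an uncountable antichain of $\mc{Q}\times S$, a contradiction. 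Note that full $0$-homogeneity could not play this role, since forcing uncountable $0$-homogeneous sets would destroy $\mathrm{Pr}_1$ itself; the asymmetric two-sided structure is the point. Without an invariant of this kind and the accompanying auxiliary forcings, your preservation lemma has no mechanism behind it, and the iteration as described cannot be carried out.
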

In particular, the attempt of  proving  ``MA$_{\omega_1}(S)[S]\Rightarrow \ms{K}_2$''  (thus answering Question \ref{q1} negatively) fails.\medskip

Another  closely related property  is Pr$_0(\omega_1, \omega, \omega)$.  For $\theta\leq \omega$, Pr$_0(\omega_1, \kappa, \theta)$ is the following statement:
\begin{itemize}
  \item (\cite{ss94}) There is a function $c:[\omega_1]^2\rightarrow \kappa$ such that whenever we are given $n<\theta$, a collection $\langle a_\alpha: \alpha< \omega_1\rangle$ of pairwise disjoint elements of $[\omega_1]^{n}$ and a function $h: n\times n\rightarrow \kappa$,   there are $\alpha< \beta$ such that $c( a_\alpha(i), a_\beta(j))=h(i,j)$ for any $i,j <n$.
  \end{itemize}
In \cite{st89}, Todorcevic proved that $\mathfrak{b}=\omega_1\Rightarrow $Pr$_0(\omega_1, \omega, \omega)$ and implicitly asked (see \cite[1.5]{st89}) the following question.
\begin{question}\label{qt}
Does $\mathfrak{t}=\omega_1$ imply $\mathrm{Pr}_0(\omega_1, \omega, \omega)$?
\end{question}
Since MA$_{\omega_1}(S)[S]\Rightarrow \mathfrak{t}=\omega_1$, a positive answer to the following question would give a negative answer to Question \ref{qt}.
\begin{question}\label{q4}
Does $\mathrm{MA}_{\omega_1}(S)[S]$ imply $\neg\mathrm{Pr}_0(\omega_1, \omega, \omega)$?
\end{question}
However, this attempt fails and we prove the following in Section 5.
\begin{thm}\label{thm2}
$\mathrm{MA}_{\omega_1}(S)[S]$ is consistent with $\mathrm{Pr}_0(\omega_1, \omega,\omega)$. 
\end{thm}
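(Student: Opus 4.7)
The plan is to mirror the three-stage construction that establishes Theorem~\ref{thm1}, modifying the coloring and the key preservation lemmas so that they apply to the stronger property $\mathrm{Pr}_0$ rather than $\mathrm{Pr}_1$. In stage one, starting in a ground model $V$ containing a coherent complete Suslin tree $S$, I would fix a coloring $c:[\omega_1]^2\to \omega$ that witnesses $\mathrm{Pr}_0(\omega_1,\omega,\omega)$ in $V$. A natural source is a variant of Todorcevic's $\mathfrak{b}=\omega_1$ coloring, adjusted so that the values $c(\alpha,\beta)$ can be read off from the level-$\alpha$ position of $\beta$ in $S$. This tying of $c$ to $S$ is what will allow later arguments to exploit the Suslinity of $S$ in preservation proofs.

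In stage two, I would perform a length-$\omega_2$ iteration $\langle \mc{P}_\alpha, \dot{\mc{Q}}_\alpha: \alpha<\omega_2\rangle$ of c.c.c.\ posets that preserve $S$ as a Suslin tree, with bookkeeping so that $\mathrm{MA}_{\omega_1}(S)$ holds in the final model $W$; this is the standard Larson--Todorcevic template. The main task at this stage is a preservation lemma: any c.c.c.\ $S$-preserving poset $\mc{P}$ preserves the assertion ``$c$ witnesses $\mathrm{Pr}_0(\omega_1,\omega,\omega)$''. Given a $\mc{P}$-name for a sequence $\langle \dot a_\alpha:\alpha<\omega_1\rangle$ of pairwise disjoint elements of $[\omega_1]^n$ and a target $h:n\times n\to \omega$, one would build an auxiliary poset inside $V^{\mc{P}}$ whose conditions approximate a pair $(\alpha,\beta)$ with $c(a_\alpha(i),a_\beta(j))=h(i,j)$ for all $i,j<n$, and verify that this poset is c.c.c.\ and $S$-preserving; then invoke $\mathrm{MA}_{\omega_1}(S)$ (already available at intermediate stages) or a direct density argument to conclude. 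The induction on iteration length, together with the $S$-preservation of such iterations, would carry $c$'s $\mathrm{Pr}_0$-character to $W$.

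In stage three, I would force with $S$ over $W$ to obtain $W[G]$, a model of $\mathrm{MA}_{\omega_1}(S)[S]$. The remaining task is to show that $c$ (or a modification built in $W$ using the tree structure of $S$) continues to witness $\mathrm{Pr}_0(\omega_1,\omega,\omega)$ in $W[G]$. This reduces to a density argument in $S$: for every $S$-name $\langle \dot a_\alpha:\alpha<\omega_1\rangle$ of pairwise disjoint $n$-tuples and every $h:n\times n\to \omega$, the set of $t\in S$ forcing the existence of $\alpha<\beta$ with $c(\dot a_\alpha(i),\dot a_\beta(j))=h(i,j)$ for all $i,j$ must be dense. One uses the fact that $S$ itself is c.c.c.\ and $S$-preserving (being Suslin) to fit the required instance of $\mathrm{Pr}_0$ inside an $\mathrm{MA}_{\omega_1}(S)$-application in $W$.

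The main obstacle is the passage from $\mathrm{Pr}_1$ preservation to $\mathrm{Pr}_0$ preservation. For $\mathrm{Pr}_1$ the auxiliary posets need only hit a single constant color $\eta$, so the combinatorics reduces to a homogeneity-style density argument; for $\mathrm{Pr}_0$ one must simultaneously realise an arbitrary prescribed color matrix $h$, and the proof that the corresponding auxiliary poset is c.c.c.\ and $S$-preserving depends on $c$ having a much richer interaction with the levels of $S$. Crafting the coloring $c$ in stage one so that this interaction is strong enough, while still being preserved by all c.c.c.\ $S$-preserving forcings, is the heart of the matter; everything else is analogous to the proof of Theorem~\ref{thm1}.
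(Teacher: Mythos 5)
There is a genuine gap, and it sits exactly where you locate ``the heart of the matter.'' Your stage-two preservation lemma --- that \emph{every} c.c.c.\ $S$-preserving poset preserves the statement ``$c$ witnesses $\mathrm{Pr}_0(\omega_1,\omega,\omega)$'' --- is not available: a fixed ground-model witness of a $\mathrm{Pr}$-type property is in general destroyed by c.c.c.\ $S$-preserving forcings (the paper says explicitly, about the analogous property (I), that it ``is difficult to be preserved in the iteration process''), and no direct density argument rescues it, since the dangerous posets are precisely those adding uncountable homogeneous sets of the wrong kind. The solution your proposal is missing is to replace the partition property by a strictly stronger \emph{half-homogeneity} property $\varphi_1(c)$ of a coloring $c:[S]^2\ra\omega$ on the tree itself (not on $[\omega_1]^2$): $\varphi_1(c)$ follows from $\mathrm{Pr}_0(S,\omega,\omega)$ in the ground model, is preserved at limits of finite support c.c.c.\ iterations, and --- crucially --- is preserved by the auxiliary posets $\mc{Q}^1_{\langle x^i_\alpha\rangle}$ that force uncountable half-homogeneous sets. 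One then iterates only those c.c.c.\ posets that preserve $\varphi_1(c)$, and at the end shows that every c.c.c.\ $S$-preserving poset preserves $\varphi_1(c)$ automatically, because a failure would let some $\mc{Q}^1$ force an uncountable antichain in $\mc{P}\times S$, contradicting the Suslinity of $S$ in $V^{\mc{P}}$. Without a self-preserving surrogate of this kind, your iteration has no reason to keep the witness alive.

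The second missing idea is the reduction that makes the preservation problem tractable at all. The paper never preserves ``hit an arbitrary color matrix $h$'': it arranges only that, after forcing with $S$, the coloring of $[\omega_1]^2$ induced by the generic branch witnesses $\mathrm{Pr}_1(\omega_1,\omega,\omega)$ (a \emph{single} prescribed color among $\omega$ many), and then invokes the standard ZFC lifting argument spelled out at the start of Section 5 to pass from $\mathrm{Pr}_1(\omega_1,\omega,\omega)$ to $\mathrm{Pr}_0(\omega_1,\omega_1,\omega)$, which is more than Theorem \ref{thm2} asks for. So the obstacle you flag --- simultaneously realising an arbitrary matrix inside the preservation arguments --- should not be attacked head-on; it dissolves once one combines a one-color tree statement with the $\pi_\beta$/$h_n$ lifting. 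Relatedly, the coloring must live on $[S]^2$, with the $[\omega_1]^2$ witness read off the generic branch; a ground-model coloring of $[\omega_1]^2$ has no mechanism for exploiting the Suslinity of $S$ in the extension.
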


 However, whether the stronger property PFA$(S)[S]$ settles above questions remains open.
\begin{question}
Does $\mathrm{PFA}(S)[S]$ imply $\ms{K}_2$? or $\mc{K}_2$? or $\neg\mathrm{Pr}_0(\omega_1, \omega, \omega)$?
\end{question} \medskip

This paper is organized as follows. Section 3 provides the initial model that we are going to start with.   In Section 4, we introduce the property $\varphi_0(c)$ for a coloring $c: [S]^2\ra 2$ on $S$. Iterated forcing c.c.c. posets that preserve $\varphi_0(c)$, we construct a model of MA$_{\omega_1}(S)[S]+$Pr$_1(\omega_1, 2, \omega)$. Thus answer Question \ref{q2} negatively.  We then show that $\varphi_0(c)$ is rejected by PFA$(S)$. This distinguishes MA$_{\omega_1}(S)$ and PFA$(S)$ by a partition property on $S$. 
In Section 5, we introduce a new property $\varphi_1(c)$  and  construct a model of MA$_{\omega_1}(S)[S]+$Pr$_0(\omega_1, \omega_1, \omega)$.

  \section{preliminary}
In this section, we introduce some standard definitions and facts.

Suslin trees in this paper are subsets of $2^{<\omega_1}$, ordered by extension.

For a  Suslin tree $S$ and $t\in S$, the\emph{ height} of $t$ is $ht(t)=dom(t)$. In other words, $ht(t)$ is the order type of $\{s\in S: s<_S t\}$. For any $\alpha<\omega_1$, the $\alpha$th level of $S$ is $S_\alpha=\{t\in S: ht(t)=\alpha\}$. For any $s, t\in 2^{<\omega_1}$, 
$$\Delta(s,t)=\max \{\alpha\leq \min\{dom(s), dom(t)\}: s\up_\alpha = t\up_\alpha\},$$ 
$$s \wedge t=s\up_{\Delta(s,t)},$$
$$D_{s,t}=\{\xi<\min\{dom(s), dom(t)\}: s(\xi)\neq t(\xi)\}.$$

A Suslin tree $S$  is \emph{coherent} if  $D_{s,t}$ is finite for any $s, t\in S$. 

A coherent Suslin tree $S\subset 2^{<\omega_1}$   is \emph{complete} if for any $s\in S$, for any $t\in 2^{ht(s)}$, $t\in S$ whenever $|D_{s,t}|<\omega$.\medskip

$[X]^\kappa$ is the set of all subsets of $X$ of size $\kappa$. Moreover, if $X$ is a set of ordinals, $k<\omega$ and $b\in [X]^k$, then $b(0),b(1),...,b(k-1)$ is the increasing enumeration of $b$.% and    throughout this paper, we will use $(b(0),...,b(k-1))$ to denote $b$.

  \begin{defn}\label{ho}
  \begin{enumerate}
\item   The \emph{height order}, denoted by $<_{ho}$, is the linear order on  $2^{<\omega_1}$ defined by for any $s\neq t$ in  $2^{<\omega_1}$,  $s<_{ho} t$ if any of the following conditions holds:
\begin{itemize}
\item $ht(s)<ht(t)$;
\item $ht(s)=ht(t)$ and $ s(\Delta(s,t))<t(\Delta(s,t))$.
\end{itemize}
  %$<_{lex}$ is the \emph{lexicographical order} of $2^{<\omega_1}$, i,e., for any $s, t\in 2^{<\omega_1}$, $s\lo t$ if either $s$ is a proper initial segment of $t$ or $s$ is incomparable with $t$ and $s(\Delta(s,t))< t(\Delta(s,t))$.   
  % For $a\in [2^{<\omega_1}]^{<\omega}$, $a(0),a(1),...,a(k-1)$ is the $\lo$-increasing enumeration of $a$.   
  \item For $a\in [2^{<\omega_1}]^{<\omega}$, $a(0),a(1),...,a(|a|-1)$ is the $\lo$-increasing enumeration of $a$. 
  \end{enumerate}
   \end{defn}

\section{An initial model}
In this section, we find a model with a complete coherent Suslin tree $S$ and a coloring on $S$ witnessing Pr$_0(S, \omega, \omega)$.

%For a Suslin tree $S$, two sequences $a, b\in S^{<\omega}$ are \emph{range-disjoint} if their ranges are disjoint, i.e., $\{a(i): i<|a|\}\cap \{b(j): j<|b|\}=\emptyset$.  $a\in S^{<\omega}$ is a \emph{1-1 sequence} if $a(i)\neq a(j)$ for any $i\neq j$ in $dom(a)$.

\begin{defn}
Suppose that $S$ is a Suslin tree.
\begin{enumerate}
\item $\mathrm{Pr}_1(S, \kappa, \theta)$ is the assertion that there is a function $c: [S]^2\ra \kappa$ such that whenever we are given $n<\theta$, a collection $\langle s_\alpha: \alpha<\omega_1\rangle$ of pairwise distinct elements of $S$,  a collection $\langle a_\alpha: \alpha<\omega_1\rangle$ of pairwise disjoint elements  of $[S]^n$ and $\eta<\kappa$,   there are $\alpha<\beta$ such that $s_\alpha<_S s_\beta$ and $c( a_\alpha(i), a_\beta(j))=\eta$ for any $i,j <n$.

\item  $\mathrm{Pr}_0(S, \kappa, \theta)$ is the assertion that there is a function $c: [S]^2\ra \kappa$ such that whenever we are given $n<\theta$, a collection $\langle s_\alpha: \alpha<\omega_1\rangle$ of pairwise distinct elements of $S$,  a collection $\langle a_\alpha: \alpha<\omega_1\rangle$ of pairwise disjoint elements  of $[S]^n$ and $h: n\times n\ra\kappa$,    there are $\alpha<\beta$ such that $s_\alpha<_S s_\beta$ and $c( a_\alpha(i), a_\beta(j))=h(i,j)$ for any $i,j <n$.
\end{enumerate}
\end{defn}
Throughout the paper, $\theta\leq \omega$.

A standard lifting argument shows that Pr$_1(S, \omega, \omega)$ implies Pr$_1(S, \omega_1, \omega)$ (see, e.g., the beginning of Section 4 of \cite{st87}) and Pr$_1(S, \omega_1, \omega)$ implies Pr$_0(S, \omega_1, \omega)$ (see, e.g., Lemma 1 of \cite{ss91}).

The following construction is standard. For completeness, we sketch a proof.

\begin{fact}[$\diamondsuit$]\label{fact1}
There is a complete coherent Suslin tree $S\subset 2^{<\omega_1}$ and a strong coloring $c: [S]^2\ra \omega$ witnessing $\mathrm{Pr}_0(S, \omega, \omega)$.
\end{fact}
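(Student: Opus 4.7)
The plan is to carry out a $\diamondsuit$-recursion of length $\omega_1$ that simultaneously builds the levels $S_\alpha$ of $S$ and defines the coloring $c$ on pairs of already-constructed nodes. I fix a $\diamondsuit$-sequence on $H_{\omega_1}$ which, at each limit $\delta$, guesses a quadruple consisting of a potential maximal antichain $A_\delta\subseteq S\up\delta$, an integer $n<\omega$, a sequence $\langle(s_\xi,a_\xi):\xi<\delta\rangle$ with $s_\xi\in S\up\delta$ pairwise distinct and $a_\xi\in[S\up\delta]^n$ pairwise disjoint, and a function $h:n\times n\to\omega$.

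Successor stages use the standard coherent rule $S_{\alpha+1}=\{s\sm 0,s\sm 1:s\in S_\alpha\}$; since no $\mathrm{Pr}_0$-constraint is created at successors, $c$ may be extended arbitrarily on new pairs. At a limit $\delta$, for each $s\in S\up\delta$ I select a branch $b_s\in 2^\delta$ extending $s$ which (i) meets the guessed antichain $A_\delta$ and (ii), whenever the $\diamondsuit$-guess is a non-degenerate candidate, passes through cofinally many of the $s_\xi$. These $b_s$ are placed into $S_\delta$, after which $S_\delta$ is closed under coherence by adjoining every $t\in 2^\delta$ with $|D_{b_s,t}|<\omega$ for some chosen $b_s$, thereby securing completeness. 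The existence of the branches $b_s$ with the cofinality-through-$s_\xi$ property is the standard genericity argument inside the countable transitive model coded by the $\diamondsuit$-guess.

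The new ingredient is the coloring step at limit $\delta$. For each $s\in S\up\delta$, once the cofinal set $X_s=\{\xi<\delta:s_\xi<_S b_s\}$ is fixed, I pick an $n$-tuple $a^*_s\in[S_\delta]^n$ of nodes extending $b_s$ and declare $c(a_\xi(i),a^*_s(j)):=h(i,j)$ for all $\xi\in X_s$ and all $i,j<n$; all remaining pair-values of $c$ at stage $\delta$ are set arbitrarily in $\omega$. The $\diamondsuit$-reflection argument then runs as usual: given a putative counterexample $(\langle s_\alpha\rangle,\langle a_\alpha\rangle,h)$ in the final model, a Fodor-type thinning produces a club of $\delta$'s on which the $\diamondsuit$-guess is correct, and at any such $\delta$ at which $s_\delta$ extends some $b_s$, the construction has already placed the required witness pair.

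The principal obstacle is reconciling completeness with the coloring assignment. Closing $S_\delta$ under finite symmetric difference adds many nodes that neither $A_\delta$ nor the $\diamondsuit$-guess controlled, and one must ensure these added nodes do not collide with any $a_\xi$ or with the chosen $a^*_s$ in a way that forces incompatible $c$-values. The usual remedy is to thin the $\diamondsuit$-guess beforehand to sequences whose $a_\xi$'s are sufficiently generic for the coherent relation, and to pick $a^*_s$ at a level of the cofinal decomposition of $\delta$ disjoint from the heights that control any $a_\xi$; only countably many such obstructions occur up to stage $\delta$, so a density argument produces the required thinning. With this bookkeeping in place, the verification that $c$ witnesses $\mathrm{Pr}_0(S,\omega,\omega)$ is routine.
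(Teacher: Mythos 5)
Your tree construction is essentially the standard one and is fine in outline (the paper achieves completeness by interleaving, for every finite $F\subset\alpha$, a node whose difference set with an already-chosen node is exactly $F$, rather than by closing off at the end, but these come to the same thing). The coloring construction, however, has a genuine gap. $\mathrm{Pr}_0(S,\omega,\omega)$ demands $\alpha<\beta$ with $s_\alpha<_S s_\beta$ and $c(a_\alpha(i),a_\beta(j))=h(i,j)$ where \emph{both} tuples $a_\alpha$ and $a_\beta$ come from the adversary's sequence. At a limit $\delta$ where the guess is correct you only color the pairs $(a_\xi(i),a^*_s(j))$ for a tuple $a^*_s$ that \emph{you} choose at level $\delta$; the adversary's tuples $a_\beta$ for $\beta\geq\delta$ consist of nodes created at later stages, and in your scheme every pair involving those nodes is colored ``arbitrarily.'' So nothing forces any pair $(a_\alpha,a_\beta)$ of the given sequence to realize $h$, and the reflection argument does not close: having correctly guessed $\langle(s_\xi,a_\xi):\xi<\delta\rangle$ buys you nothing about the colors that the future tuples $a_\beta$ will actually receive. (There is also a smaller unaddressed conflict: distinct $s,s'\in S\up_\delta$ may yield guesses $X_s,X_{s'}$ and tuples $a^*_s,a^*_{s'}$ whose pairs overlap with contradictory demanded values, though this could be repaired by bookkeeping.)

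The paper avoids this by not trying to pre-place a witness at all. It uses CH to enumerate, in order type $\omega_1$, all \emph{countable} candidate configurations $\mc{A}_\gamma=(n,\xi,\langle s_\beta\rangle,\langle a_\beta\rangle,k)$, fixes a height-respecting well-order $<_w$ of $S$, and defines $c\up_{S_{<s}\times\{s\}}$ node by node so as to maintain the invariant $(i)_s$: for every configuration $\mc{A}_\gamma$ listed so far, every $t\in S_{ht(s)}$ and every finite set $b$ of earlier nodes, if infinitely many $\beta$ satisfy $s_\beta<_S t$ and $c(x,u)=k$ for all $x\in a_\beta$, $u\in b$, then infinitely many still do after adjoining $s$ to $b$. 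This invariant constrains how the adversary's \emph{own} future nodes get colored: when an actual uncountable counterexample is proposed, one picks $\alpha$ so large that a correct countable initial segment $\mc{A}_\gamma$ appears below $a_\alpha$, and then applies $(i)_{u_j}$ successively to the elements $u_0,\dots,u_{n-1}$ of $a_\alpha$ to find $\beta<\alpha$ with $c[a_\beta\times a_\alpha]=\{k\}$ and $s_\beta<_S s_\alpha$. Note also that the paper only builds a witness for $\mathrm{Pr}_1(S,\omega,\omega)$ (a single target color $k$) and obtains $\mathrm{Pr}_0$ by the standard lifting; maintaining the single-color invariant is what makes the induction feasible. To repair your argument you would need some analogue of this ``infinitely many survivors'' invariant governing the colors assigned to all future nodes, not just to a designated tuple $a^*_s$.
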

\begin{proof}[Sketch proof.]
Fix a $\diamondsuit$-sequence $\{C_\alpha: \alpha<\omega_1\}$. We first construct $S_\alpha$ by induction on $\alpha$ such that each $S_\alpha$ is complete coherent. 

We shall only deal with the case that $\alpha$ is a limit ordinal and $C_\alpha$ is a maximal antichain of $S\up_\alpha$. Let $\{F_n: n<\omega\}$ enumerate $[\alpha]^{<\omega}$ and $\{\alpha_n: n<\omega\}$ be cofinal in $\alpha$.
We will construct $\langle s^n_m\in 2^{<\alpha}: m\leq n\rangle$ by induction on $n$. 

At step $n=2^i(2j+1)$, first choose $s\in S_{ht(s^{n-1}_i)}$ such that $D_{s, s^{n-1}_i}=F_j$ (extend $s^{n-1}_m$ first if necessary). Then extend $s$ to some $s^n_n\in S\up_\alpha$ such that $s^n_n$ extends some node in $C_\alpha$ and $ht(s^n_n)>\alpha_n$. Finally for each $m<n$, $s^n_m={s^{n-1}_m}^\smallfrown s^n_n\up_{[ht(s), ht(s^n_n))}$.

For each $m<\omega$, $s_m=\bigcup_{n\geq m} s^n_m$. Then $S_\alpha=\{s_m: m<\omega\}$ is complete coherent. Now $S=\bigcup_{\alpha<\omega_1} S_\alpha$ is a complete coherent Suslin tree.\medskip

We then construct, using CH, a strong coloring $c:[S]^2\ra \omega$ witnessing Pr$_1(S, \omega, \omega)$. Then a standard lifting will induce a coloring witnessing Pr$_0(S, \omega, \omega)$.\footnote{See, e.g., the beginning of Section 5.}

Let $\{\mc{A}_\alpha: \alpha<\omega_1\}$ enumerate all $(n,\xi, \langle s_\beta: \beta<\xi\rangle, \langle a_\beta: \beta<\xi\rangle, k)$ such that
\begin{itemize}
\item $0<n<\omega, \xi<\omega_1, k< \omega$;
\item $\langle s_\beta: \beta<\xi\rangle$ is a collection of pairwise distinct elements of $S$;
\item $\langle a_\beta: \beta<\xi\rangle$ is a collection of pairwise  disjoint elements of $[S]^n$;
\item if $\mc{A}_\alpha=(n,\xi, \langle s_\beta: \beta<\xi\rangle, \langle a_\beta: \beta<\xi\rangle, k)$, then for any $\beta<\xi$, $s_\beta\in S\up_\alpha$ and $a_\beta\in [S\up_\alpha]^n$;

%\item if $\mc{A}_\alpha=(n,\xi, \langle s_\beta: \beta<\xi\rangle, \langle a_\beta: \beta<\xi\rangle, k)$ and $0<m<n$, then for some $\gamma<\alpha$, $\mc{A}_\gamma=(m, \xi, \langle s_\beta: \beta<\xi\rangle, \langle a_\beta\up_m: \beta<\xi\rangle, k)$.
\end{itemize}
Fix a well-ordering $<_w$ of $S$ of order type $\omega_1$ such that for any $s, t\in S$, $s<_w t$ whenever $ht(s)<ht(t)$. Denote $S_{< s}=\{t\in S: t<_w s\}$.

Now define $c\up_{S_{< s}\times \{s\}}$ by induction on $<_w$ such that
\begin{enumerate}[(i)$_s$]
\item for any $t\in S_{ht(s)}$,    $\gamma<ht(s)$ and $b\in [\{u\in S_{< s}: ht(u)\geq \gamma\}]^{<\omega}$, if $\mc{A}_\gamma=(n,\xi, \langle s_\beta: \beta<\xi\rangle, \langle a_\beta: \beta<\xi\rangle, k)$  and there are infinitely many $\beta$ such that 
\begin{itemize}
\item $s_\beta<_S t$;
\item $c(x, u)=k$ for any $x\in a_\beta$ and $u\in b$,
\end{itemize}
then there are infinitely many $\beta$ such that 
\begin{itemize}
\item $s_\beta<_S t$;
\item $c(x, u)=k$ for any $x\in a_\beta$ and $u\in b\cup \{s\}$.
\end{itemize}
\end{enumerate}
Since there are at most countably many such $t, \gamma$ and $b$, a standard $\omega$-step induction produces the desired                        $c\up_{S_{< s}\times \{s\}}$.

Now we prove that $c=\bigcup_{s\in S} c\up_{S_{< s}\times \{s\}}$ witnesses Pr$_1(S, \omega, \omega)$. Fix $0<n<\omega$, a collection $\langle s_\alpha: \alpha<\omega_1\rangle$ of pairwise distinct elements of $S$,  a collection $\langle a_\alpha: \alpha<\omega_1\rangle$ of pairwise disjoint elements  of $[S]^n$ and $k<\omega$. Extending $s_\alpha$ if necessary, we may assume that $ht(s_\alpha)\geq \max\{ht(u): u\in a_\alpha\}$ for any $\alpha$.
%Going to uncountable subsets, we may assume that for any $\alpha<\beta$, $\max\{ht(x): x\in a_\alpha\}<\min \{ht(y): y\in a_\beta\}$.

Choose $\alpha<\omega_1$  such that for some $\gamma, \xi<\min\{ht(a_\alpha(0)), \alpha\}$,% where for any $j<n$, $a_\alpha(j)$ is the $j$th element of $a_\alpha$ in $<_w$-increasing enumeration,
\begin{enumerate}
\item $\mc{A}_\gamma=(n,\xi, \langle s_\beta: \beta<\xi\rangle, \langle a_\beta: \beta<\xi\rangle, k)$;
\item there are infinitely many $\beta<\xi$ such that $s_\beta<_S s_\alpha$.
\end{enumerate}
Let $u_0,..., u_{n-1}$ be an $<_w$-increasing enumeration of $a_\alpha$.
Applying (i)$_{u_j}$, by induction on $j$,  to $s_\alpha\up_{ht(u_j)}, \gamma$ and $ b_j=\{ u_{j'}: j'<j\}$, we get (infinitely many) $\beta<\xi$ such that 
\begin{itemize}
\item $s_\beta<_S s_\alpha$;
\item $c(x,u)=k$ for any $x\in a_\beta, u\in a_\alpha$.
\end{itemize}
Then $\beta<\alpha$ are as desired.
\end{proof}
Our initial models will satisfy $2^\omega=\omega_1$, $2^{\omega_1}=\omega_2$ and contain a complete coherent Suslin tree $S$ with a strong coloring witnessing Pr$_0(S, \omega, \omega)$.

\section{\texorpdfstring{MA$_{\omega_1}(S)[S]$}{MAomega1(S)[S]} does not  imply \texorpdfstring{$\mc{K}_2$}{K2}}
In this section, we will construct a model of   \m  in which Pr$_1(\omega_1, 2, \omega)$ holds. In particular, $\mc{K}_2$ fails.

We will start with a  ground model $V$ in which  GCH holds   and 
\begin{itemize}
\item $S\subset 2^{<\omega_1}$ is   a complete coherent Suslin tree;
\item  $c: [S]^2\ra 2$ is a strong coloring  witnessing Pr$_0(S, 2, \omega)$.
\end{itemize}\medskip

The coloring in the final model witnessing Pr$_1(\omega_1, 2, \omega)$ will be induced from $c$. I.e., define $c^*: [\omega_1]^2\ra 2$ by $c^*(\alpha, \beta)=c(b(\alpha), b(\beta))$ for any $\alpha<\beta$ where $b$ is the generic branch of $S$. In order for $c^*$ to witness Pr$_1(\omega_1, 2, \omega)$,  we need $c$ to satisfy the following property in the model of MA$_{\omega_1}(S)$:
\begin{enumerate}[(I)]
\item for any $n<\omega$, for any collection $\langle s_\alpha: \alpha<\omega_1\rangle$ of pairwise distinct elements of $S$, for any collection $\langle a_\alpha\in [ht(s_\alpha)]^n: \alpha< \omega_1\rangle$ of pairwise disjoint elements,  for any $k<2$, there are $\alpha< \beta$ such that 
$$s_\alpha<_S s_\beta \text{ and }c( s_\alpha\up_{a_\alpha(i)}, s_\beta\up_{a_\beta(j)})=k\text{ for any }i,j <n.$$
\end{enumerate}
Although  property (I) is true in the ground model $V$, it is difficult to be preserved in the iteration   process. So, instead of preserving property (I), we will introduce a stronger property that will be preserved in the iterated forcing.\medskip

Before introducing the stronger property, we describe the procedure first.
\begin{enumerate}[($\bigstar1$)]
\item First, we introduce a stronger property $\varphi_0(c)$ and prove that it is true in $V$.

\item Then, we prove that $\varphi_0(c)$ is preserved under finite support iterated c.c.c. forcing. And we iterated force all c.c.c. posets of size $\leq \omega_1$ that preserve both $S$ and $\varphi_0(c)$.

\item Finally, we prove that after an $\omega_2$ step iteration, every c.c.c. poset that preserves $S$ preserves $\varphi_0(c)$ automatically. So we get a model of MA$_{\omega_1}(S)$ in which $\varphi_0(c)$ (and hence property (I)) holds.
\end{enumerate}\medskip

To introduce $\varphi_0(c)$, we will need the following notations.
\begin{defn}\label{def2}
For $0<n<\omega$, $\mc{C}^0_n$ is the collection of all uncountable $A\subset [S]^{2n}$ such that for some $s\in S$,     denoted by $stem(A)$,
\begin{enumerate}
\item for any $a\in A$, $s^\smallfrown 0<_S a(0)<_S...<_S a(n-1)$ and $s^\smallfrown 1<_S a(n)<_S...<_S a(2n-1)$;\footnote{Recall that in Definition \ref{ho}, $a(i)$ is the $i$th element of $a$ in $<_{ho}$-increasing enumeration.}
\item for any $a\in A$, $ht(a(n-1))<ht(a(n))$;
\item for any $a\in A$, $D_{a(n-1), a(n)}=\{ht(s)\}$;
\item for any $a\neq b$ in $A$, either $ht(a(2n-1))< ht(b(0))$ or $ht(b(2n-1))<ht(a(0))$.
\end{enumerate}
\end{defn}
Note that for any $0<n<\omega$ and $A\in \mc{C}^0_n$, $stem(A)=a(0)\wedge a(n)$ for any $a\in A$.

\begin{defn}
\begin{enumerate}
\item $\mc{C}^0=\bigcup_{0<n<\omega} \mc{C}^0_n$.

\item For $A\in \mc{C}^0$, $N_A$ is the $n$ such that $A\in \mc{C}^0_n$. 
\end{enumerate}
\end{defn}
Now, we are ready to define the stronger property $\varphi_0$.

\begin{defn}
\begin{enumerate}
\item For $0<n<\omega$ and $\langle A_i\in \mc{C}^0: i<n\rangle$, $\varphi_0(c, A_0,..., A_{n-1})$ is the assertion that for any collection $\langle s_\alpha: \alpha<\omega_1\rangle$ of pairwise distinct elements of $S$,  for any $\langle x^i_\alpha\in [A_i]^{<\omega}: i<n, \alpha<\omega_1\rangle$ such that $\langle x^i_\alpha: \alpha<\omega_1\rangle$ is pairwise disjoint for each $i<n$, there are $\alpha<\beta$ such that
\begin{enumerate}[$(i)$]
\item $s_\alpha<_S s_\beta$;
\item for any $i<n$, for any $a\in x^i_\alpha$ and $b\in x^i_\beta$, there is $j<2$ such that for any $k, k'<N_{A_i}$,
$$c(a(k), b(k'))=j \text{ and } c(a(N_{A_i}+k), b(N_{A_i}+k'))=1-j.$$
\end{enumerate}

\item $\varphi_0(c)$ is the assertion that for any $0<n<\omega$, for any $\langle A_i\in \mc{C}^0: i<n\rangle$, $\varphi_0(c, A_0,..., A_{n-1})$ holds.
\end{enumerate}
\end{defn}
\textbf{Remark.} Repetition is allowed in   $\varphi_0(c, A_0,..., A_{n-1})$.

The following  is straightforward.
\begin{fact}
$\varphi_0(c)$ implies that $S$ is Suslin. 
\end{fact}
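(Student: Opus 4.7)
The plan is a direct argument by contradiction: suppose toward a contradiction that $S$ has an uncountable antichain $\{s_\alpha : \alpha < \omega_1\}$. The key observation is that clause (ii) of $\varphi_0(c,A_0,\dots,A_{n-1})$ is universally quantified over elements of the auxiliary sets $x^i_\alpha$, so taking every $x^i_\alpha$ to be empty makes (ii) vacuous and leaves only clause (i): ``there exist $\alpha<\beta$ with $s_\alpha<_S s_\beta$.'' If I can witness one nontrivial instance of $\varphi_0(c)$ in this manner, the antichain assumption is violated immediately.

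To make this usable, I first exhibit an element of $\mc{C}^0_1$, so that the instance $\varphi_0(c,A)$ is not applied vacuously. Using coherence and completeness of $S$, this is a routine tree construction: fix some $s \in S$, and by recursion on $\xi<\omega_1$ pick pairs $a_\xi = \{a_\xi(0),a_\xi(1)\}$ with $s\sm 0 <_S a_\xi(0)$, $s\sm 1 <_S a_\xi(1)$, $ht(a_\xi(0))<ht(a_\xi(1))$ and $D_{a_\xi(0),a_\xi(1)} = \{ht(s)\}$; the node $a_\xi(1)$ is produced by flipping $a_\xi(0)$ at height $ht(s)$ (which lies in $S$ by completeness) and then extending further. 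Scheduling the heights to be strictly increasing in $\xi$ secures clause (4) of Definition \ref{def2}, so the resulting family $A = \{a_\xi:\xi<\omega_1\}$ lies in $\mc{C}^0_1$ with $stem(A)=s$.

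Finally I apply $\varphi_0(c,A)$ to the sequence $\langle s_\alpha : \alpha<\omega_1 \rangle$ together with $x^0_\alpha = \emptyset$ for all $\alpha$: disjointness of the $x^0_\alpha$'s is trivial, clause (ii) is vacuous, so the conclusion forces some $\alpha<\beta$ with $s_\alpha <_S s_\beta$, contradicting the antichain property. There is essentially no obstacle: the only actual work is the warm-up verification that $\mc{C}^0_1 \neq \emptyset$, and the whole point is that $\varphi_0(c)$ already encodes Suslinness in its $n=1$, empty-auxiliary-data instance, independently of the coloring $c$ itself.
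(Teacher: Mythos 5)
Your argument is correct and is precisely the ``straightforward'' proof the paper leaves unstated: an instance of $\varphi_0(c,A)$ with all auxiliary sets $x^0_\alpha$ empty asserts exactly that every uncountable family of distinct nodes of $S$ contains a $<_S$-comparable pair, and your preliminary verification that $\mathcal{C}^0_1\neq\emptyset$ is the right (and only) thing that needs checking, since otherwise the quantifier over $\langle A_i\rangle$ would be vacuous. The only point you leave implicit is that uncountable branches are also excluded, which is standard here: $S$ is $2$-splitting by completeness, so an uncountable branch would itself yield an uncountable antichain.
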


The proof that $\varphi_0(c)$ implies property (I) is simple.  But in order to have a sense of the property $\varphi_0(c)$, we will provide a proof as a warm up. We first recall   a simple  fact about Suslin trees. 
\begin{fact}\label{f2}
For any $X\in [S]^{\omega_1}$, there is $s\in S$ such that $X$ is dense above $s$, i.e., for any $t\geq_S s$, there is $x\in X$ such that $t\leq_S x$,.
\end{fact}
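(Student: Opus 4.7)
The plan is to argue by contradiction: assume no such $s$ exists. Let $U = \{t \in S : \text{no } x \in X \text{ satisfies } t \leq_S x\}$. The contradiction hypothesis says that $U$ is dense in $S$, i.e., every $s \in S$ has an extension lying in $U$. Moreover $U$ is upward-closed, since if $t \in U$ and $t \leq_S t'$, then any $x \geq_S t'$ also satisfies $x \geq_S t$ and so cannot lie in $X$.

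Next I would take $A$ to be the set of $<_S$-minimal elements of $U$. It is an antichain: a strict comparability $a <_S a'$ inside $A$ would contradict the minimality of $a'$. It is also maximal: for any $s \in S$, density gives $t \geq_S s$ with $t \in U$, and the set $\{u \in U : u \leq_S t\}$ is a nonempty subset of the chain $\{u \in S : u \leq_S t\}$, which is well-ordered of countable type, so it attains a minimum $a \in A$; both $a$ and $s$ then lie in the chain below $t$ and are therefore comparable.

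Since $S$ is Suslin, $A$ is countable. For any $x \in X$, upward-closure of $U$ combined with $x \notin U$ (witnessed by $x \leq_S x$) rules out $a \leq_S x$ for every $a \in A$; maximality of $A$ then forces $x <_S a$ for some $a \in A$. Hence $X \subseteq \bigcup_{a \in A} \{u \in S : u <_S a\}$, a countable union of countable chains, contradicting $|X| = \omega_1$. The only step that is not pure bookkeeping is confirming that $U$ has minimal elements below each of its members, which rests on the fact that initial segments in $2^{<\omega_1}$ are well-ordered and countable; with that in place the Suslinness of $S$ does all the work.
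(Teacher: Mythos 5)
Your proof is correct, and it rests on the same key point as the paper's: the nodes above which $X$ fails to appear yield a maximal antichain, which is countable by Suslinness, and an uncountable $X$ cannot then be accommodated. The paper phrases this slightly more directly --- it takes a maximal antichain of nodes above which $X$ is either dense or empty and picks an element of $X$ of height above $\sup\{ht(a): a\in A\}$, rather than arguing by contradiction and covering $X$ by countably many countable chains --- but the two arguments are essentially the same.
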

\begin{proof}
Note that for any  $s\in S$, there is $t\geq_S s$ such that $X$ is either dense above $s$,  or   empty above $t$, i.e., no $x\in X$ is above $t$. So take a maximal antichain $A\subset S$ such that for any $s\in A$, $X$ is either dense above $s$ or empty above $s$. Since $A$ is at most countable, choose $x\in X$ such that $ht(x)>\sup\{ht(s): s\in A\}$. Then choose $s\in A$ such that $s<_S x$. Clearly, $X$ is dense above $s$.
\end{proof}
\textbf{Remark.} Above argument actually shows the following: for any $X\in [S]^{\omega_1}$, there is $\alpha<\omega_1$ such that $X$ is dense above $x\up_\alpha$ for any $x\in X\setminus S\up_\alpha$.

\begin{lem}\label{lem1}
Assume $\varphi_0(c)$. For any $n<\omega$, for any collection $\langle s_\alpha: \alpha<\omega_1\rangle$ of pairwise distinct elements of $S$, for any collection $\langle a_\alpha\in [ht(s_\alpha)]^n: \alpha< \omega_1\rangle$ of pairwise disjoint elements   and  $k<2$, there are $\alpha< \beta$ such that $s_\alpha<_S s_\beta $ and $c( s_\alpha\up_{a_\alpha(i)}, s_\beta\up_{a_\beta(j)})=k$ for any $i,j <n$.
\end{lem}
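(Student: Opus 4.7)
The plan is to apply $\varphi_0(c)$ to a single family $A\in\mc C^0_n$ whose members' first-halves are exactly the target nodes $s_\alpha\up_{a_\alpha(i)}$, so that the first-half conclusion of $\varphi_0$ reads directly as the monochromatic-rectangle conclusion of Lemma~\ref{lem1} for some existentially-chosen color~$j$.

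Pre-processing: by a $\Delta$-system argument, a stacking of heights, Fact~\ref{f2}, and a pigeonhole on the bit-values $s_\alpha(\gamma)$ at $\gamma$ in some fixed countable set cofinal below $a_\alpha(0)$, I pass to an uncountable subsequence on which $ht(s_\alpha)$ is strictly increasing, the $a_\alpha$'s are stacked (so $\max a_\alpha<\min a_\beta$ for $\alpha<\beta$), and for each $\alpha$ a height $\gamma_\alpha<a_\alpha(0)$ with $s_\alpha(\gamma_\alpha)=0$ is available. Let $s_\alpha^*$ be the coherent flip of $s_\alpha$ at $\gamma_\alpha$, which lies in $S$ by completeness, and pick strictly increasing $\eta_\alpha^0<\cdots<\eta_\alpha^{n-1}$ in $(a_\alpha(n-1),ht(s_\alpha))$, stacked across $\alpha$. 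Define
$$\tilde a_\alpha(i)=s_\alpha\up_{a_\alpha(i)}\ (i<n),\qquad \tilde a_\alpha(n+i)=s_\alpha^*\up_{\eta_\alpha^i}\ (i<n).$$
The four clauses of Definition~\ref{def2} hold with stem $\sigma_\alpha:=s_\alpha\up_{\gamma_\alpha}$: clause~(1) from $s_\alpha(\gamma_\alpha)=0$ and $s_\alpha^*(\gamma_\alpha)=1$, clause~(2) from $\eta_\alpha^0>a_\alpha(n-1)$, clause~(3) from the fact that $s_\alpha$ and $s_\alpha^*$ agree outside $\{\gamma_\alpha\}$ (so $D_{\tilde a_\alpha(n-1),\tilde a_\alpha(n)}=\{\gamma_\alpha\}$), and clause~(4) from the stacking. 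Hence $A=\{\tilde a_\alpha:\alpha<\omega_1\}\in\mc C^0_n$.

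Applying $\varphi_0(c,A)$ with the given $\langle s_\alpha\rangle$ and singleton choices $x_\alpha=\{\tilde a_\alpha\}$ produces $\alpha<\beta$ with $s_\alpha<_S s_\beta$ and some $j\in\{0,1\}$ so that $c(s_\alpha\up_{a_\alpha(i)},s_\beta\up_{a_\beta(i')})=j$ for all $i,i'<n$. If $j=k$ we are done. The main obstacle is the case $j=1-k$, where $\varphi_0$ only gives color~$k$ on the flipped rectangle $\{s_\alpha^*\up_{\eta_\alpha^i}\}\times\{s_\beta^*\up_{\eta_\beta^{i'}}\}$, not on the original $u$-rectangle. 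To push past this I plan to run a mirror construction after a second pigeonhole which also produces $\gamma'_\alpha<a_\alpha(0)$ with $s_\alpha(\gamma'_\alpha)=1$, builds a companion $B\in\mc C^0_n$ in which the $u_\alpha^i$'s occupy the \emph{second} half of each $\tilde b_\alpha$, and invokes the joint instance $\varphi_0(c,A,B)$ with $x^0_\alpha=\{\tilde a_\alpha\}$ and $x^1_\alpha=\{\tilde b_\alpha\}$; this pins the conclusions of both $A$ and $B$ to the same pair $\alpha<\beta$, forcing $j_A=1-j_B$. Iterating $\varphi_0$ along uncountably many pairwise-disjoint sub-sequences and pigeonholing on the common value of $c(u_\alpha,u_\beta)$, together with a further application of $\varphi_0$ to the flipped sequence $\langle s_\alpha^*,\eta_\alpha\rangle$ to rule out the ``always wrong color'' outcome, is where the coherence/completeness of~$S$ is used non-trivially and where the real content of the warm-up argument concentrates.
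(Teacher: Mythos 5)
Your construction has a genuine gap at exactly the point you flag as ``the main obstacle,'' and the proposed repair does not close it. You place the target nodes $s_\alpha\up_{a_\alpha(i)}$ only in the \emph{first} half of each $\tilde a_\alpha$ and fill the second half with auxiliary nodes $s_\alpha^*\up_{\eta_\alpha^i}$ that are not restrictions of any member of the given sequence at points of any given $a_\beta$. Consequently $\varphi_0(c,A)$ only tells you that the target rectangle is monochromatic in \emph{some} color $j$; when $j=1-k$ the complementary color $k$ lands on the auxiliary rectangle, which is worthless for the lemma. The mirror construction cannot rescue this: if $B$ carries the same target pairs in its second half, then the joint instance $\varphi_0(c,A,B)$ applied to the same $\alpha<\beta$ colors the very same pairs of nodes twice, so the identity $j_A=1-j_B$ is automatic and yields no new information --- the targets still receive whatever color $c$ actually gives them, and nothing forces that color to be $k$. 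The subsequent suggestions (iterating over disjoint subsequences, pigeonholing, applying $\varphi_0$ to the flipped sequence) are not an argument. There is also a secondary defect: Definition~\ref{def2} requires a \emph{single} stem $s$ for the whole family $A$, whereas your stems $\sigma_\alpha=s_\alpha\up_{\gamma_\alpha}$ vary with $\alpha$; this is repairable by pigeonholing but is not done.

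The missing idea is that \emph{both} halves of each $2n$-tuple must consist of target nodes, so that whichever of $j$ or $1-j$ equals $k$, one of the two halves witnesses the lemma. The paper achieves this by first fixing (via Fact~\ref{f2}) a node $s$ above which $\{s_\alpha\}$ is dense, and then pairing up two genuine members of the original sequence: $s_{f(\alpha)}$ above $s\sm 0$ and $s_{g(\alpha)}$ above $s\sm 1$ with $D_{s_{f(\alpha)},s_{g(\alpha)}}=\{ht(s)\}$ (the second obtained from the first by complete coherence and then density above $s$ again). The tuple $b_\alpha$ is built from $s_{f(\alpha)}\up_{a_{f(\alpha)}(i)}$ on the first half and $s_{g(\alpha)}\up_{a_{g(\alpha)}(i)}$ on the second, with the common stem $s$. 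Then $\varphi_0$ produces $\alpha<\beta$ with the first-half rectangle colored $j$ and the second-half rectangle colored $1-j$, and since both rectangles are target rectangles, the pair $(f(\alpha),f(\beta))$ or $(g(\alpha),g(\beta))$ gives color $k$; the relation $s_{f(\alpha)}<_S s_{f(\beta)}$ follows from $s_{g(\alpha)}<_S s_{g(\beta)}$ and the fixed flip at $ht(s)$. Your use of a fixed flip and of completeness is in the right spirit, but it must be applied to produce a second copy of the \emph{given data}, not a decoration.
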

\begin{proof}
Fix $0<n<\omega$, pairwise distinct $\langle s_\alpha: \alpha<\omega_1\rangle$,  pairwise disjoint $\langle a_\alpha\in [ht(s_\alpha)]^n: \alpha< \omega_1\rangle$ and $k<2$.

By Fact \ref{f2}, fix $s\in S$ such that $\{s_\alpha: \alpha<\omega_1\}$ is dense above $s$. For each $\alpha<\omega_1$, choose $g(\alpha)>f(\alpha)>\alpha$ such that
\begin{enumerate}
\item $s^\smallfrown 0<_S s_{f(\alpha)}$ and $s^\smallfrown 1<_S s_{g(\alpha)}$;
\item $a_{g(\alpha)}(0)>ht(s_{f(\alpha)})$ and $a_{f(\alpha)}(0)>ht(s)+1$;

\item $D_{s_{f(\alpha)}, s_{g(\alpha)}}=\{ht(s)\}$.
\end{enumerate}
We describe the procedure of finding $f(\alpha)$ and $ g(\alpha)$. Since $\{s_\alpha: \alpha<\omega_1\}$ is dense above $s$,  there are uncountably many $\xi$ such that $s^\smallfrown 0<_S s_\xi$. So we can find $f(\alpha)>\alpha$ such that  $s^\smallfrown 0<_S s_{f(\alpha)}$ and $a_{f(\alpha)}(0)>ht(s)+1$. Then by complete coherence of $S$, $t\triangleq s^\smallfrown 1^\smallfrown s_{f(\alpha)}\up_{[ht(s)+1, ht(s_{f(\alpha)}))}\in S$. By dense above $s$ property again, find $g(\alpha)>f(\alpha)$ such that $t<_S s_{g(\alpha)}$ and $a_{g(\alpha)}(0)>ht(s_{f(\alpha)})$.\medskip

Going to an uncountable subset, we may assume that for any $\alpha<\beta$,
\begin{enumerate}\setcounter{enumi}{3}
\item $ht(s_{g(\alpha)})<a_{f(\beta)}(0)$.
\end{enumerate}
Then $A=\{b_\alpha: \alpha<\omega_1\}\in \mc{C}^0_n$ where for any $i<n$, 
\begin{enumerate}\setcounter{enumi}{4}
\item $b_\alpha(i)=s_{f(\alpha)}\up_{a_{f(\alpha)}(i)}$ and $b_\alpha(n+i)=s_{g(\alpha)}\up_{a_{g(\alpha)}(i)}$.
\end{enumerate}

Applying $\varphi_0(c)$ to $\langle s_{g(\alpha)}: \alpha<\omega_1\rangle$ and $\langle \{b_\alpha\}: \alpha<\omega_1\rangle$, we get $\alpha<\beta$ and $j<2$ such that
\begin{enumerate}\setcounter{enumi}{5}
\item $s_{g(\alpha)}<_S s_{g(\beta)}$;
\item for any $i,i'<n$,  $$c(b_\alpha(i), b_\beta(i'))=j \text{ and } c(b_\alpha(n+i), b_\beta(n+i'))=1-j.$$
\end{enumerate}
We claim that either $f(\alpha)< f(\beta)$ or $g(\alpha)< g(\beta)$ witnesses the conclusion of the lemma. Just note by (2), (3) and  (6),  $s_{f(\alpha)}<_S s_{f(\beta)}$.
\end{proof}

It follows immediately that forcing with $S$ over a model satisfying $\varphi_0(c)$ produces Pr$_1(\omega_1, 2, \omega)$.
\begin{cor}\label{cor}
Suppose that $c: [S]^2\ra 2$ is a coloring on a complete coherent Suslin tree $S$ such that $\varphi_0(c)$ holds. Then $\Vdash_S \mathrm{Pr}_1(\omega_1, 2, \omega)$.
\end{cor}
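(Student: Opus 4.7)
The plan is to define a name for the target coloring directly from the generic branch and then reduce any would-be counterexample to the hypothesis of Lemma \ref{lem1} in the ground model $V$. Let $\dot{b}$ be the canonical $S$-name for the generic branch, viewed as an element of $2^{\omega_1}$, and define $c^* : [\omega_1]^2 \to 2$ in $V^S$ by $c^*(\alpha,\beta) = c(\dot{b}\up_\alpha, \dot{b}\up_\beta)$ for $\alpha<\beta$. The point is that, because $S$ is a tree and names for finite sets of ordinals can be decided by single conditions, any putative counterexample below a condition $s_0 \in S$ can be lifted to a counterexample in $V$ to the ground-model statement Lemma \ref{lem1}, which we already know to hold.

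Suppose toward contradiction that some $s_0 \in S$ forces $c^*$ to fail $\mathrm{Pr}_1(\omega_1,2,\omega)$, witnessed by an $S$-name sequence $\langle \dot{a}_\alpha : \alpha<\omega_1\rangle$ of pairwise disjoint elements of $[\omega_1]^n$ and some $k<2$ (strengthen $s_0$ to decide $n$ and $k$). Working in $V$, for each $\alpha<\omega_1$ use the c.c.c. of $S$ to pick a condition $t_\alpha \geq_S s_0$ and a concrete finite set $a_\alpha \in [\omega_1]^n$ with $t_\alpha \Vdash \dot{a}_\alpha = a_\alpha$, and by further extending (inside the tree) arrange $ht(t_\alpha) > a_\alpha(n-1)$, so that $t_\alpha\up_{a_\alpha(i)}$ is defined for each $i<n$. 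We now have an uncountable family $\{a_\alpha : \alpha<\omega_1\}$ of finite subsets of $\omega_1$. Applying the $\Delta$-system lemma, refine to an uncountable $I\subset\omega_1$ with common root $r$. If $r\neq\emptyset$, pick $\xi\in r$; then every $t_\alpha$ $(\alpha\in I)$ forces $\xi\in\dot{a}_\alpha$, so any two compatible $t_\alpha, t_\beta$ would force $\dot{a}_\alpha\cap\dot{a}_\beta \neq\emptyset$, contradicting the forced pairwise disjointness. Hence $\{t_\alpha : \alpha\in I\}$ would be an uncountable antichain in $S$, contradicting c.c.c. So $r=\emptyset$ and $\{a_\alpha : \alpha\in I\}$ is pairwise disjoint in $V$.

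At this point the hypotheses of Lemma \ref{lem1} are met by $\langle t_\alpha : \alpha\in I\rangle$, $\langle a_\alpha : \alpha\in I\rangle$, and $k$: the $t_\alpha$ are pairwise distinct (re-index $I$ to ensure this, discarding countably many repeats), the $a_\alpha$ are pairwise disjoint with $a_\alpha \in [ht(t_\alpha)]^n$. The lemma produces $\alpha<\beta$ in $I$ with $t_\alpha <_S t_\beta$ and $c(t_\alpha\up_{a_\alpha(i)}, t_\beta\up_{a_\beta(j)})=k$ for all $i,j<n$. Because $t_\beta \geq_S t_\alpha \geq_S s_0$, the condition $t_\beta$ forces $\dot{b}\up_{a_\alpha(i)} = t_\alpha\up_{a_\alpha(i)}$ and $\dot{b}\up_{a_\beta(j)} = t_\beta\up_{a_\beta(j)}$, hence forces $c^*(a_\alpha(i), a_\beta(j)) = k$ for all $i,j<n$; it also forces $\dot{a}_\alpha = a_\alpha$ and $\dot{a}_\beta = a_\beta$. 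Thus $t_\beta$ forces the existence of the monochromatic pair $\alpha<\beta$ whose non-existence $s_0$ was supposed to force, a contradiction. The main obstacle in the argument is bridging $V$ and $V^S$, resolved by the $\Delta$-system plus c.c.c. step that turns compatibly-decided $\dot{a}_\alpha$'s into a genuinely pairwise disjoint family in $V$; the rest is routine forcing bookkeeping on top of Lemma \ref{lem1}.
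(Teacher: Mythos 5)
Your proof is correct and takes essentially the same route as the paper: decide the name for the pairwise disjoint family on nodes of $S$, thin to a family of pairwise distinct nodes with pairwise disjoint finite sets in the ground model, and apply Lemma \ref{lem1}. The only cosmetic differences are that the paper argues directly rather than by contradiction and secures disjointness by choosing $a_\alpha\in[\omega_1\setminus\alpha]^n$ instead of your $\Delta$-system/antichain argument.
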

\begin{proof}
Let $\dot{c}^*: [\omega_1]^2\ra 2$ be  an $S$-name induced from $c$, i.e., 
$$s\Vdash_S \dot{c}^*(\alpha, \beta)=c(s\up_\alpha, s\up_\beta)$$
 for any $\alpha<\beta<\omega_1$ and $s\in S\setminus S\up_\beta$.
 
Then $\Vdash_S \dot{c}^* $ witnesses Pr$_1(\omega_1, 2, \omega)$. To see this, fix $t\in S$, $n<\omega$, an $S$-name $\dot{X}$ of an uncountable pairwise disjoint subset of $[\omega_1]^n$ and $k<2$. It suffices to find $s>_S t$ and disjoint $a, b\in [\omega_1]^n$ such that 
$$s\Vdash_S a, b\in \dot{X} \text{ and }\dot{c}^*(a(i), b(j))=k\text{ for any }i,j<n.$$
Find, for each $\alpha<\omega_1$, $s_\alpha>_S t$ and $a_\alpha\in [\omega_1\setminus \alpha]^n$ such that $ht(s_\alpha)>\max a_\alpha$ and
$$s_\alpha\Vdash_S a_\alpha\in \dot{X}.$$
Find $\Gamma\in [\omega_1]^{\omega_1}$ such that $\langle s_\alpha: \alpha\in \Gamma\rangle $ is pairwise distinct and $\langle a_\alpha: \alpha\in \Gamma\rangle$ is pairwise disjoint. Then by Lemma \ref{lem1}, there are $\alpha<\beta$ in $\Gamma$ such that  $s_\alpha<_S s_\beta $ and $c( s_\alpha\up_{a_\alpha(i)}, s_\beta\up_{a_\beta(j)})=k$ for any $i,j <n$. Then clearly $s_\beta\Vdash_S \dot{c}^*(a_\alpha(i), a_\beta(j))=k$ for any $i,j<n$.
\end{proof}

Now we prove that $\varphi_0(c)$ holds in $V$. In fact, it follows from Pr$_0(S, 2, \omega)$.
\begin{lem}\label{initial}
If $c$ witnesses $\mathrm{Pr}_0(S, 2, \omega)$, then $\varphi_0(c)$ holds.
\end{lem}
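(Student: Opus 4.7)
The plan is to reduce $\varphi_0(c, A_0, \ldots, A_{n-1})$ to a single instance of $\mathrm{Pr}_0(S, 2, \omega)$ by packing all the simultaneous color requirements into one carefully designed target function $h$.

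First I would uniformize the data. By a standard pigeonhole and $\Delta$-system argument, pass to an uncountable $\Gamma \subseteq \omega_1$ on which $|x^i_\alpha| = m_i$ is constant for each $i < n$. Enumerate $x^i_\alpha = \{a^{i,k}_\alpha : k < m_i\}$ canonically and consider the finite label set $L = \{(i, k, p) : i < n,\ k < m_i,\ p < 2N_{A_i}\}$ with evaluation $f_\alpha(i, k, p) = a^{i,k}_\alpha(p)$. A further shrinking makes the coincidence relation $\ell \sim \ell' \Leftrightarrow f_\alpha(\ell) = f_\alpha(\ell')$ independent of $\alpha$, ensures the images $X_\alpha := \mathrm{ran}(f_\alpha) \in [S]^M$ (where $M = |L/{\sim}|$, enumerated in $<_{ho}$-order) are pairwise disjoint, and keeps $\langle s_\alpha\rangle$ pairwise distinct. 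This yields a canonical $\pi : L \to M$, constant in $\alpha$, with $f_\alpha(\ell) = X_\alpha(\pi(\ell))$.

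Next I would define $h : M \times M \to 2$ by: $h(p, q) = 0$ whenever there exist $(i, k, r), (i, k', r') \in L$ with $r, r' < N_{A_i}$ and $\pi(i, k, r) = p,\ \pi(i, k', r') = q$; and $h(p, q) = 1$ whenever analogous labels have $r, r' \geq N_{A_i}$ (arbitrary otherwise). Applying $\mathrm{Pr}_0(S, 2, \omega)$ to $\langle s_\alpha\rangle, \langle X_\alpha\rangle, h$ produces $\alpha < \beta$ in $\Gamma$ with $s_\alpha <_S s_\beta$ and $c(X_\alpha(p), X_\beta(q)) = h(p, q)$ for all $p, q < M$. Translating back through $\pi$, for every $i < n$ and every $a \in x^i_\alpha, b \in x^i_\beta$ we obtain $c(a(r), b(r')) = 0$ for all $r, r' < N_{A_i}$ and $c(a(N_{A_i} + r), b(N_{A_i} + r')) = 1$ for all $r, r' < N_{A_i}$, so $j = 0$ witnesses $\varphi_0(c, A_0, \ldots, A_{n-1})$ at the pair $(\alpha, \beta)$.

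The main obstacle is verifying that $h$ is well-defined, i.e., that no pair $(p, q)$ is prescribed both values by different indices $i, i'$. Such a clash forces some node to lie simultaneously on the ``first-half'' side of one stem and the ``second-half'' side of another, which can only happen when $\mathrm{stem}(A_i)$ and $\mathrm{stem}(A_{i'})$ are strictly comparable in $S$. In that case I would replace the global choice ``all $j_i = 0$'' with an $i$-dependent $j_i \in \{0, 1\}$, setting $h(p, q) = j_i$ on first-half pairs and $1 - j_i$ on second-half pairs. The overlap constraints then become an $\mathbb{F}_2$-linear system in the $j_i$'s, and a careful analysis of the tree order of the stems (using the coherent structure of $S$) yields a consistent assignment; once the $j_i$'s are chosen, $h$ is well-defined and the argument above goes through verbatim with the witness $j = j_i$ in place of $0$.
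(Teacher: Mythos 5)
Your global strategy --- uniformize, collect all nodes of all the $x^i_\alpha$ into a single finite set $X_\alpha$ with a position map constant in $\alpha$, and hit everything with one application of $\mathrm{Pr}_0(S,2,\omega)$ via a target function $h$ on positions --- is exactly the paper's. The problem is the last step: your $h$ is not well-defined, you correctly notice this, but the repair you propose does not work. Taking one unknown $j_i$ per index $i$ both strengthens the statement to be proved (in $\varphi_0$ the bit $j$ is allowed to depend on the pair $(a,b)$, not only on $i$) and leads to a system that can genuinely be inconsistent. Concretely, let $n=2$, $N_{A_0}=N_{A_1}=1$, $s=stem(A_0)$, $t=stem(A_1)$ with $t>_S s\sm 0$. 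Choose $a_1,a_2\in x^0_\alpha$ with $a_1(0)>_S t\sm 0$ and $a_2(0)>_S t\sm 1$ (both extend $s\sm 0$, so both are first-half for $A_0$; this is possible since distinct elements of $A_0$ are merely height-separated, not $<_S$-comparable), and choose $a_1',a_2'\in x^1_\alpha$ with $a_1'(0)=a_1(0)$ and $a_2'(1)=a_2(0)$; arrange the same pattern on the $\beta$ side. The pair of positions carrying $(a_1(0),b_1(0))$ forces $j_0=j_1$ (first-half for both $A_0$ and $A_1$), while the pair carrying $(a_2(0),b_2(0))$ forces $j_0=1-j_1$ (first-half for $A_0$, second-half for $A_1$). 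No assignment of the $j_i$ exists, and no further uncountable refinement removes the clash, since it lives inside a single $\alpha$ and is reproduced on every $\alpha$ after your uniformization.

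The missing idea is to make the target colour a function of the $\alpha$-side node alone rather than a choice per $i$. Let $D=\{ht(stem(A_i)): i<n\}$ and, after arranging $\max D<\min\{ht(u):u\in X_\alpha\}$ and $X_\alpha(k)\up_D=X_0(k)\up_D$ for all $\alpha$ and $k$, set $h(p,q)=0$ iff $|\{\xi\in D: X_0(p)(\xi)=0\}|$ is even. This is trivially well-defined. For any $i$ and $a\in A_i$, the nodes $a(k)$ with $k<N_{A_i}$ all have the same $D$-trace as $a(0)$ (they are $<_S$-increasing above $stem(A_i)\sm 0$ and lie above $\max D$), while the nodes $a(N_{A_i}+k)$ differ from $a(0)$ on $D$ exactly at the single point $ht(stem(A_i))$, so their parity is flipped. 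Hence after applying $\mathrm{Pr}_0$ with this $h$, the whole first half of each pair $(a,b)$ receives a single colour $j$ (depending on $a$ through its $D$-trace) and the whole second half receives $1-j$, which is precisely what $\varphi_0$ asks for. With this replacement for $h$ the rest of your argument goes through.
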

\begin{proof}
Arbitrarily choose $0<n<\omega$, $\langle A_i\in \mc{C}^0: i<n\rangle$,  a collection $\langle s_\alpha: \alpha<\omega_1\rangle$ of pairwise distinct elements of $S$,  a collection $\langle x^i_\alpha\in [A_i]^{<\omega}: i<n, \alpha<\omega_1\rangle$ such that $\langle x^i_\alpha: \alpha<\omega_1\rangle$ is pairwise disjoint for each $i<n$. It suffices to find $\alpha<\beta$ witnessing the assertion of $\varphi_0(c, A_0,..., A_{n-1})$.

%Going to uncountable subsets, we may assume that for some $\langle m_i: i<n\rangle$, $|x^i_\alpha|=m_i$ for any $i<n$ and $\alpha<\omega_1$.  

Now for each $\alpha<\omega_1$,  define
$$F_\alpha=\{a(k):  \text{ for some }i<n, a\in x^i_\alpha\text{ and } k<2N_{A_i}\} .$$
Denote 
$$D=\{ht(stem(A_i)): i<n\}.$$
Going to uncountable subsets, we may assume that 
\begin{enumerate}
\item $\{F_\alpha: \alpha<\omega_1\}$ is pairwise disjoint;
\item for any $\alpha<\omega_1$, $|F_\alpha|=|F_0|$ and $\max D< \min\{ht(s): s\in F_\alpha\}$;
%\item for any $i<n$ and $j<m_i$, the position of $x^i_\alpha(j)$ in $F_\alpha$ is independent of $\alpha$, i.e., for any $k<2N_{A_i}$ and $l<|F_\alpha|$,  $x^i_\alpha(j)(k)=F_\alpha(l)$ iff $x^i_0(j)(k)=F_0(l)$;
\item for any $\alpha<\omega_1$ and $k<|F_0|$, $F_\alpha(k)\up_D=F_0(k)\up_D$.
\end{enumerate}

Now define $h: |F_0|\ra 2$ by  for any $k<|F_0|$,
 $$h(k)=0 \text{ iff } |\{\xi\in D: F_0(k)(\xi)=0\}| \text{ is even.}$$ Apply Pr$_0(S, 2, \omega)$ to get $\alpha<\beta$ such that
\begin{enumerate}\setcounter{enumi}{3}
\item $s_\alpha<_S s_\beta$;
\item for any $k,k'< |F_0|$, $c(F_\alpha(k), F_\beta(k'))=h (k)$.
\end{enumerate}\medskip

We now check that $\alpha<\beta$ are as desired. First note that by (2), (3) and (5),
\begin{enumerate}\setcounter{enumi}{5}
\item for any $u\in F_\alpha$ and $v\in F_\beta$, $c(u,v)=0$ iff $|\{\xi\in D: u(\xi)=0\}|$ is even.
\end{enumerate}
Fix $i<n$, $a\in x^i_\alpha$, $b\in x^i_\beta$ and $k, k'<N_{A_i}$. It is clear from the definition of $\mc{C}^0$ and the fact that $ht(stem(A_i))\in D$ that
$$|\{\xi\in D: a(k)(\xi)=0\}|=|\{\xi\in D: a(0)(\xi)=0\}|\text{ and}$$
$$|\{\xi\in D: a(N_{A_i}+k)(\xi)=0\}|=|\{\xi\in D: a(0)(\xi)=0\}|-1.$$
Together with (6), for $j=c(a(0), b(0))$,
$$c(a(k), b(k'))=j \text{ and }c(a(N_{A_i}+k), b(N_{A_i}+k'))=1-j.$$
This, together with (4), shows that $\alpha<\beta$ are as desired and finishes the proof of the lemma.
\end{proof}

We then prove that $\varphi_0(c)$ is preserved at limit stages of finite support iteration of c.c.c. posets.
\begin{lem}\label{iteration}
Suppose that $\nu$ is a limit ordinal and $\mc{P}_\nu$ is the direct limit of the finite support iteration  $\langle \mc{P}_\beta, \dot{\mc{Q}}_\beta: \beta<\nu\rangle$   of c.c.c.  posets. If for any $\beta<\nu$, $\Vdash_{\mc{P}_\beta} \varphi_0(c)$, then $\Vdash_{\mc{P}_\nu} \varphi_0(c)$.
\end{lem}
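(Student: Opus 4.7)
The plan is a finite-support reflection argument. Given $p^* \in \mc{P}_\nu$ forcing that $\langle \dot{A}_i : i < n\rangle$, $\langle \dot{s}_\alpha : \alpha < \omega_1\rangle$, $\langle \dot{x}^i_\alpha \rangle$ realize the hypotheses of $\varphi_0(c, \dot{A}_0, \ldots, \dot{A}_{n-1})$, I will produce a condition below $p^*$ forcing the desired $\alpha < \beta$. First, for each $\alpha < \omega_1$ strengthen $p^*$ to $p_\alpha$ deciding $\dot{s}_\alpha = s_\alpha$, $\dot{x}^i_\alpha = x^i_\alpha$, $N_{\dot{A}_i} = n_i$, and $\mathrm{stem}(\dot{A}_i) = s_i$; after a countable thinning, $n_i$ and $s_i$ are constant in $\alpha$.

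Apply the $\Delta$-system lemma to the finite supports $F_\alpha = \mathrm{supp}(p_\alpha)$, obtaining an uncountable $\Gamma_0 \subseteq \omega_1$ whose supports form a $\Delta$-system with root $R$. A pigeonhole reduction (using pairwise disjointness of the leaves $G_\alpha = F_\alpha \setminus R$) arranges that each $G_\alpha$ lies entirely above $\max R$. Since $\nu$ is a limit ordinal and $R$ is finite, fix $\eta < \nu$ with $R \subseteq \eta$; then $q_\alpha := p_\alpha \up \eta$ has support contained in $R$, while the tails $r_\alpha := p_\alpha \up [\eta, \nu)$ have pairwise disjoint supports in $[\eta, \nu)$, making any two tails compatible.

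The key intermediate step is finding $q_0 \leq p^* \up \eta$ forcing $\dot{\Gamma}_1 := \{\alpha \in \Gamma_0 : q_\alpha \in \dot{G}_\eta\}$ uncountable. If this failed, the set of $q \leq p^* \up \eta$ forcing $\dot{\Gamma}_1$ bounded in $\omega_1$ would be dense below $p^* \up \eta$; by c.c.c.\ of $\mc{P}_\eta$ a maximal antichain in that set is countable and yields a uniform bound $\sigma^* < \omega_1$ with $p^* \up \eta \Vdash \dot{\Gamma}_1 \subseteq \check\sigma^*$. But each $q_\alpha \leq p^* \up \eta$ forces $\check\alpha \in \dot{\Gamma}_1$, so every $\alpha \in \Gamma_0$ would satisfy $\alpha < \sigma^*$, contradicting uncountability of $\Gamma_0$. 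Pick a $\mc{P}_\eta$-generic $G_\eta$ containing $q_0$; then in $V[G_\eta]$ the set $\Gamma_1$ is uncountable, and compatibility of $q_\alpha, q_{\alpha'} \in G_\eta$ combined with disjoint tails gives compatibility of $p_\alpha, p_{\alpha'}$ in $\mc{P}_\nu$, which transports the forced pairwise distinctness of $\langle \dot s_\alpha\rangle$ and pairwise disjointness of $\langle \dot x^i_\alpha \rangle$ into actual ground-model properties of $\langle s_\alpha : \alpha \in \Gamma_1 \rangle$ and $\langle x^i_\alpha : \alpha \in \Gamma_1\rangle$.

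Define $A_i^* := \bigcup_{\alpha \in \Gamma_1} x^i_\alpha$ (after thinning to keep each $x^i_\alpha$ non-empty, or dropping $i$ entirely if the $x^i_\alpha$ are eventually empty, in which case clause (ii) for that $i$ becomes vacuous). Conditions (1)--(4) in Definition \ref{def2} with stem $s_i$ are absolute and hold for every $a \in A_i^*$ because a common extension of the relevant $p_\alpha$'s forces $a \in \dot A_i$; hence $A_i^* \in \mc{C}^0_{n_i}$ in $V[G_\eta]$. By the induction hypothesis $V[G_\eta] \models \varphi_0(c)$, so applying $\varphi_0(c, A_0^*, \ldots, A_{n-1}^*)$ to $\langle s_\alpha : \alpha \in \Gamma_1 \rangle$ and $\langle x^i_\alpha : \alpha \in \Gamma_1 \rangle$ yields $\alpha < \beta$ in $\Gamma_1$ witnessing property (ii). Choose $q \in G_\eta$ extending both $q_\alpha, q_\beta$, and let $r^*$ combine $r_\alpha, r_\beta$ with $p^* \up [\eta, \nu)$ on their respective supports (well defined since $G_\alpha, G_\beta$ are disjoint and $r_\alpha, r_\beta \leq p^* \up [\eta, \nu)$ coordinatewise). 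Then $q \cup r^* \leq p^*, p_\alpha, p_\beta$ is a condition in $\mc{P}_\nu$ forcing the conclusion of $\varphi_0(c, \dot A_0, \ldots, \dot A_{n-1})$ via the witnesses $\alpha, \beta$. The main technical obstacle is the density argument producing $q_0$; the rest is standard support-bookkeeping familiar from finite-support iterations preserving c.c.c., property (K), and the like.
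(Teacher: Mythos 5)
Your proof is correct and follows essentially the same route as the paper's: decide the data with conditions $p_\alpha$, extract a $\Delta$-system, pass to a $\mc{P}_\eta$-generic extension capturing uncountably many of the restrictions $p_\alpha\up_\eta$ (your antichain/uniform-bound argument is the standard justification, which the paper leaves implicit), apply $\varphi_0(c)$ there (valid since $\eta<\nu$), and pull back a compatible pair $p_\alpha, p_\beta$. The only quibble is that your intermediate claims that each leaf $G_\alpha$ can be pushed above $\max R$ and that $q_\alpha$ has support contained in $R$ need not hold when $\max R$ is uncountable, but neither is actually used: all that matters is $R\subseteq\eta$, so the tails beyond $\eta$ have pairwise disjoint supports while the initial segments are compatible by membership in $G_\eta$.
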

\begin{proof}
Suppose otherwise. Fix $p\in \mc{P}_\nu$ and $0<n<\omega$, $\mc{P}_\nu$-names $\dot{A}_0,...,\dot{A}_{n-1}$ in $\dot{\mc{C}}^0$ such that
$$p\Vdash_{\mc{P}_\nu} \varphi_0(c, \dot{A}_0,...,\dot{A}_{n-1}) \text{ fails.}$$
Assume moreover that for $\mc{P}_\nu$-names $\dot{f}: \omega_1\ra S$ and $\dot{g}_i: \omega_1\ra [\dot{A}_i]^{<\omega}$ where $i<n$,
\begin{enumerate}
\item $p\Vdash_{\mc{P}_\nu} \langle \dot{f}(\alpha): \alpha<\omega_1\rangle \text{ and } \langle \dot{g}_i(\alpha): i<n, \alpha<\omega_1\rangle$   witness the failure of $\varphi_0(c, \dot{A}_0,...,\dot{A}_{n-1})$.
\end{enumerate}

Extending $p$ if necessary,   find $t_0,..., t_{n-1}$ and $m_0,...,m_{n-1}$ such that
\begin{enumerate}\setcounter{enumi}{1}
\item $p\Vdash_{\mc{P}_\nu} stem(\dot{A}_i)=t_i \text{ and } N_{\dot{A}_i}=m_i \text{ for any } i<n.$
\end{enumerate}

Now for any $\alpha<\omega_1$, find $p_\alpha<_{\mc{P}_\nu} p$  and $s_\alpha\in S$, $\langle x^i_\alpha: i<n \rangle$ such that 
\begin{enumerate}\setcounter{enumi}{2}
\item $p_\alpha\Vdash_{\mc{P}_\nu} \dot{f}(\alpha)=s_\alpha \text{ and } \dot{g}_i(\alpha)=x^i_\alpha \text{ for any } i<n.$
\end{enumerate}

Since $supp(p_\alpha)$ -- the support of $p_\alpha$ --  is finite for any $\alpha$, there is $\Gamma\in [\omega_1]^{\omega_1}$ such that $\{supp(p_\alpha): \alpha\in \Gamma\}$ forms a $\Delta$-system with root $r$. 

Choose $\eta<\nu$ such that $supp(p)\cup r\subset \eta$. Choose a generic filter $G$ of $\mc{P}_\eta$ containing $p\up_\eta$ such that $\Gamma'=\{\alpha\in \Gamma: p_\alpha\up_\eta\in G\}$ is uncountable.\medskip

Now work in $V[G]$. Find $\Gamma''\in[\Gamma']^{\omega_1}$ such that  $\langle s_\xi: \xi\in \Gamma''\rangle$ is pairwise distinct and for any $\alpha<\beta$ in $\Gamma''$,
\begin{enumerate}\setcounter{enumi}{3}
\item $ \max\{ht(s): s\in \bigcup\{ \bigcup x^i_\alpha: i<n\}\}< \min\{ht(s): s\in \bigcup\{ \bigcup x^i_\beta: i<n\}\}.$
\end{enumerate}
To see the existence of such $\Gamma''$, note that by c.c.c. of $\mc{P}_\nu$ and the fact that 
$$p\Vdash_{\mc{P_\nu}} rang(\dot{g}_i)\text{ is  pairwise disjoint for any }i<n,$$
 for any $\alpha<\omega_1$,   $\{\beta: \bigcup\{\bigcup x^i_\beta: i<n\}\cap S\up_\alpha\neq\emptyset\}$ is at most countable.

For $i<n$, let
$$B_i=\{a: a\in x^i_\alpha \text{ for some } \alpha\in \Gamma''\}.$$
It is straightforward to check that $B_i\in \mc{C}^0$ (in $V[G]$) for any $i<n$. 

Recall that $V[G]\vDash \varphi_0(c)$. Applying $\varphi_0(c)$ to $\langle s_\alpha: \alpha\in \Gamma''\rangle$ and $\langle x^i_\alpha: i<n, \alpha\in \Gamma''\rangle$, we find $\alpha<\beta$ in $\Gamma''$  such that
\begin{enumerate}\setcounter{enumi}{4}
\item $s_\alpha<_S s_\beta$;
\item for any $i<n$, for any $a\in x^i_\alpha$ and $b\in x^i_\beta$, there is $j<2$ such that for any $k,k'< m_i$,
$$c(a(k), b(k'))=j \text{ and } c(a(m_i+k), b(m_i+k'))=1-j.$$
\end{enumerate}\medskip

Now work in $V$. Note that $p_\alpha$ is compatible with $p_\beta$. Pick $p'<_{\mc{P}_\nu} p_\alpha, p_\beta$. But then (3), (5) and (6) show that
\begin{enumerate}\setcounter{enumi}{6}
\item $p'\Vdash_{\mc{P}_\nu}  \varphi_0(c, \dot{A}_0,...,\dot{A}_{n-1})$ is true for the assignments $ \dot{f}, \langle \dot{g}_i: i<n\rangle$ witnessed by $\alpha<\beta.$
\end{enumerate}
This contradicts (1) and the fact that $p'<_{\mc{P}_\nu} p$.
\end{proof}

\begin{defn}\label{defQ}
Suppose that $0<n<\omega$, $A_0,..., A_{n-1}$ are in $\mc{C}^0$ and $\langle x^i_\alpha\in [A_i]^{<\omega}:\alpha<\omega_1\rangle$ is pairwise disjoint for each $i<n$. Then $\mc{Q}_{\langle x^i_\alpha: i<n,\alpha<\omega_1\rangle}$ is the poset consisting of all $p\in [\omega_1]^{<\omega}$ such that
\begin{itemize}
\item  for any $\alpha<\beta$ in $p$ and $i<n$, for any  $a\in x^i_\alpha$ and  $b\in x^i_\beta$,  there is $j<2$ such that for any $k,k'<N_{A_i}$, 
$$c(a(k), b(k'))=j \text{ and } c(a(N_{A_i}+k), b(N_{A_i}+k'))=1-j.$$
\end{itemize}
The order is reverse inclusion.
\end{defn}
The following lemma  shows that under $\varphi_0(c)$,   forcing with $\mc{Q}_{\langle x^i_\alpha: i<n,\alpha<\omega_1\rangle}$ preserves $\varphi_0(c)$.
\begin{lem}\label{preservation}
Assume $\varphi_0(c)$.
If $0<n<\omega$, $A_0,..., A_{n-1}$ are in $\mc{C}^0$ and $\langle x^i_\alpha\in [A_i]^{<\omega}:\alpha<\omega_1\rangle$ is pairwise disjoint for each $i<n$, then $\mc{Q}_{\langle x^i_\alpha: i<n,\alpha<\omega_1\rangle}$ is c.c.c. and $\Vdash_{\mc{Q}_{\langle x^i_\alpha: i<n,\alpha<\omega_1\rangle}} \varphi_0(c)$.
\end{lem}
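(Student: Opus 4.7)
Let me write $\mc{Q}$ for $\mc{Q}_{\langle x^i_\alpha: i<n,\alpha<\omega_1\rangle}$. For c.c.c., I would begin with any uncountable $\{p_\gamma:\gamma<\omega_1\}\subset\mc{Q}$ and apply a $\Delta$-system plus ordinal-sparsification reduction, passing to an uncountable $\Gamma$ and an integer $\ell$ with $p_\gamma=r\cup q_\gamma$, $|q_\gamma|=\ell$, the $q_\gamma$'s pairwise disjoint, and $\max q_\gamma<\min q_{\gamma'}$ whenever $\gamma<\gamma'$ in $\Gamma$. Enumerate $q_\gamma=\{\alpha_\gamma(0)<\cdots<\alpha_\gamma(\ell-1)\}$ and set $y^i_\gamma=\bigcup_{j<\ell}x^i_{\alpha_\gamma(j)}\in[A_i]^{<\omega}$; pairwise disjointness of $\langle x^i_\alpha:\alpha<\omega_1\rangle$ combined with that of the $q_\gamma$'s makes $\langle y^i_\gamma:\gamma\in\Gamma\rangle$ pairwise disjoint for each $i<n$. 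I then fix any pairwise distinct $\{s_\gamma:\gamma\in\Gamma\}\subset S$ and apply $\varphi_0(c,A_0,\ldots,A_{n-1})$ to $\langle s_\gamma\rangle$ and $\langle y^i_\gamma\rangle$; the resulting pair $\gamma<\gamma'$ supplies exactly the color clause needed to see $p_\gamma\cup p_{\gamma'}\in\mc{Q}$, so $\mc{Q}$ is c.c.c.

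For the preservation of $\varphi_0(c)$, I argue by contradiction: suppose some $p\in\mc{Q}$ forces names $\dot{f},\dot{g}_0,\ldots,\dot{g}_{N-1}$ and $\dot{A}_0,\ldots,\dot{A}_{N-1}$ to witness the failure of $\varphi_0(c,\dot{A}_0,\ldots,\dot{A}_{N-1})$. For each $\eta<\omega_1$ pick $p_\eta\leq p$ deciding $\dot{f}(\eta)=s_\eta$, $\dot{g}_i(\eta)=u^i_\eta$, $\mathrm{stem}(\dot{A}_i)=t_i$ and $N_{\dot{A}_i}=m_i$, with the last two uniform by a first thinning. A $\Delta$-system reduction on $\{p_\eta\}$ yields root $r$ and tails $q_\eta$ of constant size $\ell$ with $\max q_\eta<\min q_{\eta'}$ for $\eta<\eta'$. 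Since $p$ forces $\langle\dot{g}_i(\alpha):\alpha<\omega_1\rangle$ to be pairwise disjoint, the c.c.c.\ of $\mc{Q}$ prevents any fixed element of $[S]^{2m_i}$ from occurring in uncountably many $u^i_\eta$; a standard free-set thinning then gives an uncountable index set on which $\langle u^i_\eta\rangle$ is pairwise disjoint in $V$, and further thinning makes $\{s_\eta\}$ pairwise distinct, each $u^i_\eta$ nonempty (dropping any vacuously handled $i$), and, for $\eta<\eta'$, the heights in $u^i_\eta$ strictly below those in $u^i_{\eta'}$. Setting $C_i=\bigcup_\eta u^i_\eta$, the clauses of Definition~\ref{def2} relative to stem $t_i$ are absolute and verified by construction, so $C_i\in\mc{C}^0_{m_i}$ in $V$.

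Now in $V$ I invoke $\varphi_0(c,A_0,\ldots,A_{n-1},C_0,\ldots,C_{N-1})$ on the combined data: the pairwise distinct $\langle s_\eta\rangle$, the sets $y^i_\eta=\bigcup_{j<\ell}x^i_{\alpha_\eta(j)}\in[A_i]^{<\omega}$ for $i<n$, and $z^i_\eta=u^i_\eta\in[C_i]^{<\omega}$ for $i<N$. The output pair $\eta<\eta'$ delivers $s_\eta<_S s_{\eta'}$, the color clause on the $y$'s which promotes $p_\eta\cup p_{\eta'}$ to a legitimate condition of $\mc{Q}$, and the color clause on the $z$'s which forces that $(\eta,\eta')$ is exactly the $\varphi_0$-witness pair whose nonexistence $p$ had asserted, a contradiction. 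The central obstacle is the construction of the ground-model $C_i\in\mc{C}^0_{m_i}$: disjointness in $V$ must be recovered from the forced disjointness of $\dot{g}_i$ by combining it with the c.c.c.\ of $\mc{Q}$, and clause~(4) of Definition~\ref{def2} demands the extra height-separation thinning. Once $C_i$ is in hand, a single application of $\varphi_0(c)$ with $n+N$ inputs simultaneously secures compatibility of $p_\eta$ with $p_{\eta'}$ and the forced witness pair.
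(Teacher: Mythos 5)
Your argument is correct and follows essentially the same route as the paper's: c.c.c.\ via a $\Delta$-system reduction plus one application of $\varphi_0(c,A_0,\ldots,A_{n-1})$ to the unions of the $x^i_\alpha$ over the disjoint tails, and preservation via building the ground-model sets $C_i\in\mc{C}^0$ from the decided values $u^i_\eta$ (using c.c.c.\ and height-separation thinning for disjointness) and then a single application of $\varphi_0$ with the $n+N$ combined inputs, which simultaneously yields compatibility of $p_\eta,p_{\eta'}$ and the forbidden witness pair. This matches the paper's proof, including the observation that the root of the $\Delta$-system does not affect compatibility.
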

 \begin{proof}
 c.c.c. follows directly from $\varphi_0(c)$. Just note that the common part does not affect compatibility, i.e., for two conditions $p$ and $q$, $p$ is compatible with $q$ iff $p\setminus (p\cap q)$ is compatible with $q\setminus (p\cap q)$.\medskip
 
 Now suppose, towards a contradiction, that $\nVdash \varphi_0(c)$. Fix $p\in \mc{Q}_{\langle x^i_\alpha: i<n,\alpha<\omega_1\rangle}$ and $0<m<\omega$, $\mc{Q}_{\langle x^i_\alpha: i<n,\alpha<\omega_1\rangle}$-names $\dot{B}_0,...,\dot{B}_{m-1}$ in $\dot{\mc{C}}^0$ such that
$$p\Vdash \varphi_0(c, \dot{B}_0,...,\dot{B}_{m-1}) \text{ fails.}$$

Assume moreover that for $\mc{Q}_{\langle x^i_\alpha: i<n,\alpha<\omega_1\rangle}$-names $\dot{f}: \omega_1\ra S$ and $\dot{g}_i: \omega_1\ra [\dot{B}_i]^{<\omega}$ where $i<m$,
\begin{enumerate}
\item $p\Vdash \langle \dot{f}(\alpha): \alpha<\omega_1\rangle \text{ and } \langle \dot{g}_i(\alpha): i<m, \alpha<\omega_1\rangle$   witness the failure of $\varphi_0(c, \dot{B}_0,...,\dot{B}_{m-1})$.
\end{enumerate}

Extending $p$ if necessary,   find $t_0,..., t_{m-1}$ and $k_0,...,k_{m-1}$ such that
\begin{enumerate}\setcounter{enumi}{1}
\item $p\Vdash stem(\dot{B}_i)=t_i \text{ and } N_{\dot{B}_i}=k_i \text{ for any } i<m.$
\end{enumerate}

Now for any $\alpha<\omega_1$, find $p_\alpha<_{\mc{Q}_{\langle x^i_\alpha: i<n,\alpha<\omega_1\rangle}} p$  and $s_\alpha\in S$, $\langle y^i_\alpha: i<m \rangle$ such that 
\begin{enumerate}\setcounter{enumi}{2}
\item $p_\alpha\Vdash  \dot{f}(\alpha)=s_\alpha \text{ and } \dot{g}_i(\alpha)=y^i_\alpha \text{ for any } i<m.$
\end{enumerate}

Find $\Gamma\in [\omega_1]^{\omega_1}$ such that
\begin{enumerate}\setcounter{enumi}{3}
 \item $\{p_\alpha: \alpha\in \Gamma\}$ forms a $\Delta$-system with root $r$;
 \item $\langle ht(s_\alpha): \alpha\in \Gamma\rangle$ is strictly increasing and for any $\alpha<\beta$ in $\Gamma$, 
 $$ \max\{ht(s): s\in \bigcup\{ \bigcup y^i_\alpha: i<m\}\}< \min\{ht(s): s\in \bigcup\{ \bigcup y^i_\beta: i<m\}\}.$$
\end{enumerate}
Together with (3), we conclude that for any $i<m$, $\langle y^{i}_\alpha: \alpha\in \Gamma\rangle$ is pairwise disjoint.\medskip

 For $i<m$, let
$$B'_i=\{a: a\in y^i_\alpha \text{ for some } \alpha\in \Gamma\}.$$
It is straightforward to check, by (3) and (5), that $B'_i\in \mc{C}^0$   for any $i<m$. 

For $i<n$ and $\alpha\in \Gamma$, let
$$y^{m+i}_\alpha=\bigcup_{\gamma\in p_\alpha\setminus r} x^i_\gamma.$$
Note that by (4), for any $i<n$, $\langle y^{m+i}_\alpha: \alpha\in \Gamma\rangle$ is pairwise disjoint.

Applying $\varphi_0(c, B'_0,...,B'_{m-1}, A_0,...,A_{n-1})$ to $\langle s_\alpha: \alpha\in \Gamma\rangle$ and $\langle y^i_\alpha: i<m+n, \alpha\in \Gamma\rangle$, we find $\alpha<\beta$ in $\Gamma$  such that
\begin{enumerate}\setcounter{enumi}{5}
\item $s_\alpha<_S s_\beta$;
\item for any $i<m+n$, for any $a\in y^i_\alpha$ and $b\in y^i_\beta$, there is $j<2$ such that for any $k,k'< \frac{|a|}{2}$,
$$c(a(k), b(k'))=j \text{ and } c(a(\frac{|a|}{2}+k), b(\frac{|a|}{2}+k'))=1-j.$$
\end{enumerate}
(7) shows that $p_\alpha$ is compatible with $p_\beta$. Pick $p'<_{\mc{Q}_{\langle x^i_\alpha: i<n,\alpha<\omega_1\rangle}} p_\alpha, p_\beta$. But then (3), (6) and (7) show that
\begin{enumerate}\setcounter{enumi}{7}
\item $p'\Vdash  \varphi_0(c, \dot{B}_0,...,\dot{B}_{m-1})$ is true for the assignments $ \dot{f}, \langle \dot{g}_i: i<m\rangle$ witnessed by $\alpha<\beta.$
\end{enumerate}
This contradicts (1) and the fact that $p'<_{\mc{Q}_{\langle x^i_\alpha: i<n,\alpha<\omega_1\rangle}} p$.
 \end{proof}
 
 Now we are ready to prove the following consistency.
 \begin{thm}\label{varphi0}
It is consistent that there is a complete coherent Suslin tree $S\subset 2^{<\omega_1}$ and a coloring $c: [S]^2\ra 2$ such that $\mathrm{MA}_{\omega_1}(S)$ and $\varphi_0(c)$ hold.
 \end{thm}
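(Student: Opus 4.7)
The plan follows the three-step strategy $(\bigstar1)$--$(\bigstar3)$ outlined before the theorem. Start in a ground model $V$ satisfying $\mathrm{GCH}$ which, by Fact \ref{fact1}, contains a complete coherent Suslin tree $S\subseteq 2^{<\omega_1}$ and a strong coloring $c:[S]^2\to 2$ witnessing $\mathrm{Pr}_0(S,2,\omega)$; Lemma \ref{initial} then gives $\varphi_0(c)$ in $V$. Perform a finite support iteration $\langle \mc{P}_\alpha,\dot{\mc{Q}}_\alpha:\alpha<\omega_2\rangle$ in which each $\dot{\mc{Q}}_\alpha$ is a $\mc{P}_\alpha$-name for a c.c.c. poset of size at most $\omega_1$ that is forced to preserve both $S$ and $\varphi_0(c)$. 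Lemma \ref{iteration} carries $\varphi_0(c)$ through limit stages, and preservation at successor stages is built into the choice of iterands; since $\varphi_0(c)$ implies that $S$ is Suslin, $S$ automatically remains Suslin throughout.

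For bookkeeping, which is possible because $\mathrm{GCH}$ is maintained (so each $V[G_\alpha]$ has $2^{\omega_1}=\omega_2$), enumerate cofinally all $\mc{P}_\alpha$-names for c.c.c. posets of size at most $\omega_1$ that preserve $S$ and $\varphi_0(c)$. Interleave into this enumeration every sealing poset $\mc{Q}_{\langle x^i_\alpha:i<n,\alpha<\omega_1\rangle}$ of Definition \ref{defQ}, for every choice of pairwise disjoint sequences from elements of $\mc{C}^0$ appearing at some intermediate stage; by Lemma \ref{preservation} these are c.c.c. and $\varphi_0(c)$-preserving, hence legal iterands. A direct reflection argument then shows $\varphi_0(c)$ survives in $V[G_{\omega_2}]$.

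The main obstacle is step $(\bigstar3)$: showing that in $V[G_{\omega_2}]$, every c.c.c. poset $\mc{P}$ preserving $S$ automatically preserves $\varphi_0(c)$, so the bookkeeping actually delivers full $\mathrm{MA}_{\omega_1}(S)$. Suppose toward a contradiction that some $\mc{P}$ and $\mc{P}$-names $\dot{A}_0,\dots,\dot{A}_{n-1}\in\dot{\mc{C}}^0$, $\dot f:\omega_1\to S$ and $\dot g_i:\omega_1\to[\dot{A}_i]^{<\omega}$ witness the failure. Mimicking the proof of Lemma \ref{preservation}, fix $p\in \mc{P}$, stems $t_i$, and numbers $m_i$ with $p\Vdash stem(\dot A_i)=t_i$ and $N_{\dot A_i}=m_i$; then extract, in $V[G_{\omega_2}]$, conditions $p_\alpha\le p$ and values $s_\alpha, x^i_\alpha$ with $p_\alpha\Vdash \dot f(\alpha)=s_\alpha$ and $\dot g_i(\alpha)=x^i_\alpha$. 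By a downward reflection argument, these sequences lie in some $V[G_\eta]$ with $\eta<\omega_2$, and bookkeeping guarantees that at some later stage $\xi\ge\eta$ the sealing poset $\mc{Q}_{\langle x^i_\alpha:i<n,\alpha<\omega_1\rangle}$ was applied, adding an uncountable $\Gamma\subseteq\omega_1$ on which the required color equations for the $x^i$'s are uniformly satisfied.

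To finish, invoke $S$-preservation of $\mc{P}$: the product $\mc{P}\times S$ is c.c.c. in $V[G_{\omega_2}]$, so among $\{(p_\alpha,s_\alpha):\alpha\in\Gamma\}$, after thinning to pairwise distinct $s_\alpha$ of strictly increasing height, uncountably many pairs are pairwise compatible, yielding $\alpha<\beta$ in $\Gamma$ with $p_\alpha,p_\beta$ compatible in $\mc{P}$ and $s_\alpha<_S s_\beta$. Any common extension $p'\le p_\alpha,p_\beta$ then forces that this $\alpha<\beta$ witnesses $\varphi_0(c,\dot A_0,\dots,\dot A_{n-1})$ for the prescribed assignments, since $c$ is a ground-model object and the color conditions already hold outright in $V[G_{\omega_2}]$ by choice of $\Gamma$. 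This contradicts the failure of $\varphi_0(c)$ in the $\mc{P}$-extension. The delicate parts are (a) performing the reflection so that the sealing poset is actually one of the iterands, and (b) reconciling the three distinct c.c.c. conditions (of $\mc{P}$, of $S$, and of $\mc{Q}$) in a single extension; this absorption argument is where the construction stands or falls.
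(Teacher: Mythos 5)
Your proposal is correct and follows essentially the same route as the paper: start from the $\mathrm{Pr}_0(S,2,\omega)$ model of Fact \ref{fact1}, iterate all small c.c.c.\ posets preserving $\varphi_0(c)$ using Lemmas \ref{initial}, \ref{iteration} and \ref{preservation}, and for step $(\bigstar 3)$ seal a putative failure with $\mc{Q}_{\langle x^i_\alpha\rangle}$ at a later stage and then play the resulting uncountable homogeneous $\Gamma$ against the c.c.c.\ of $\mc{P}\times S$ (the paper phrases this as exhibiting an uncountable antichain in $\mc{Q}\times S$, which is the contrapositive of your compatible-pair extraction). The only slip is cosmetic: c.c.c.\ of $\mc{P}\times S$ yields one compatible pair from an uncountable set, not uncountably many pairwise compatible conditions, but one pair is all the argument needs.
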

 \begin{proof}
 By Fact \ref{fact1}, start with a model $V$ of GCH in which there is a complete coherent Suslin tree $S\subset 2^{<\omega_1}$ and a coloring $c: [S]^2\ra 2$ witnessing Pr$_0(S, 2, \omega)$. Then by Lemma \ref{initial}, $\varphi_0(c)$ holds in $V$.
 
 We then iterated force all  c.c.c. posets of size $\leq\omega_1$ that preserve $\varphi_0(c)$ with finite support in a standard way as the iteration to force Martin's Axiom (see \cite{ma} or \cite{jech}). Let $\mc{P}$ be the direct limit of the finite support iteration $\langle \mc{P}_\alpha, \dot{\mc{Q}}_\alpha: \alpha<\omega_2\rangle$.
 
 Let $G$ be a generic filter over $\mc{P}$. Then by Lemma \ref{iteration}, $V[G]\vDash \varphi_0(c)$. In particular, $S$ is still Suslin in $V[G]$. Now it suffices to prove that $V[G]\vDash $ MA$_{\omega_1}(S)$.
 
 Fix a c.c.c. poset $\mc{Q}$ such that $\Vdash_\mc{Q} S$ is Suslin and a collection of dense open sets $\{D_\alpha: \alpha<\omega_1\}$. Without loss of generality, assume $|\mc{Q}|\leq \omega_1$.\medskip
 
 First assumme that for some $\alpha<\omega_2$, $\mc{Q}\in V[G_\alpha]$ where $G_\alpha=\{p\up_\alpha: p\in G\}$ and 
 $$V[G_\alpha]\vDash ``\nVdash_{\mc{Q}} \varphi_0(c)".$$
 
 Now work in $V[G_\alpha]$. Repeating the argument in Lemma \ref{preservation}, we 
\begin{enumerate}
\item find $p\in \mc{Q}$ and $0<m<\omega$, $\mc{Q}$-names $\dot{B}_0,...,\dot{B}_{m-1}$ in $\dot{\mc{C}}^0$,  $\dot{f}: \omega_1\ra S$ and $\dot{g}_i: \omega_1\ra [\dot{B}_i]^{<\omega}$ where $i<m$ such that $p\Vdash_\mc{Q} \langle \dot{f}(\alpha): \alpha<\omega_1\rangle \text{ and } \langle \dot{g}_i(\alpha): i<m, \alpha<\omega_1\rangle$   witness the failure of $\varphi_0(c, \dot{B}_0,...,\dot{B}_{m-1})$;

\item  extending $p$ if necessary,  find $t_0,..., t_{m-1}$ and $k_0,...,k_{m-1}$ such that $p\Vdash stem(\dot{B}_i)=t_i \text{ and } N_{\dot{B}_i}=k_i \text{ for any } i<m$;
  
\item  and find, for each $\alpha<\omega_1$, $p_\alpha<_{\mc{Q}} p$  and $s_\alpha\in S$, $\langle y^i_\alpha: i<m \rangle$ such that $p_\alpha\Vdash  \dot{f}(\alpha)=s_\alpha \text{ and } \dot{g}_i(\alpha)=y^i_\alpha \text{ for any } i<m.$
\end{enumerate}

Now, find $\Gamma\in [\omega_1]^{\omega_1}$ such that
\begin{enumerate}\setcounter{enumi}{3}
 \item $\langle ht(s_\alpha): \alpha\in \Gamma\rangle$ is strictly increasing and for any $\alpha<\beta$ in $\Gamma$, 
 $$ \max\{ht(s): s\in \bigcup\{ \bigcup y^i_\alpha: i<m\}\}< \min\{ht(s): s\in \bigcup\{ \bigcup y^i_\beta: i<m\}\}.$$
\end{enumerate}
Together with (3), we conclude that for any $i<m$, $\langle y^{i}_\alpha: \alpha\in \Gamma\rangle$ is pairwise disjoint.\medskip

 For $i<m$, let
$$A_i=\{a: a\in y^i_\alpha \text{ for some } \alpha\in \Gamma\}.$$
It is straightforward to check, by (3) and (4), that $A_i\in \mc{C}^0$   for any $i<m$.  \medskip

Then (1)-(4) show that
\begin{enumerate}\setcounter{enumi}{4}
\item for any $\alpha<\beta$ in $\Gamma$, if $p_\alpha$ is compatible with $p_\beta$, then either $s_\alpha$ is incomparable with $s_\beta$ or there are $i<m$,   $a\in y^i_\alpha$ and  $b\in y^i_\beta$ such that no $j<2$ satisfies for any $k,k'<k_i$, 
$$c(a(k), b(k'))=j \text{ and } c(a(k_i+k), b(k_i+k'))=1-j.$$
\end{enumerate}

By Lemma \ref{preservation}, $\mc{Q}_{\langle y^i_\alpha: i<m,\alpha\in \Gamma\rangle}$ is a c.c.c. poset of size $\leq \omega_1$ that preserves $\varphi_0(c)$ in $V[G_\beta]$ for any $\beta\in [\alpha,\omega_2)$. Omitting a countable part, we may assume that
$$\Vdash_{\mc{Q}_{\langle y^i_\alpha: i<m,\alpha\in \Gamma\rangle}} \dot{H} \text{ is uncountable}$$
where $\dot{H}$ is the canonical name for the $\mc{Q}_{\langle y^i_\alpha: i<m,\alpha\in \Gamma\rangle}$-generic filter.

 We may assume that for some $\eta>\alpha$, $\mc{Q}_{\langle y^i_\alpha: i<m,\alpha\in \Gamma\rangle}$ is $\dot{\mc{Q}}_\eta^{G_\eta}$.

Now, for $G(\eta)=\{p(\eta): p\in G\}$, $\bigcup G(\eta)$  is an uncountable subset of $\Gamma$ such that
\begin{enumerate}\setcounter{enumi}{5}
\item  for any $\alpha<\beta$ in $\bigcup G(\eta)$ and $i<m$, for any  $a\in y^i_\alpha$ and  $b\in y^i_\beta$,  there is $j<2$ such that for any $k,k'<k_i$, 
$$c(a(k), b(k'))=j \text{ and } c(a(k_i+k), b(k_i+k'))=1-j.$$
\end{enumerate}

Now work in $V[G]$.
(5) and (6) show that $\{(p_\alpha, s_\alpha): \alpha\in \bigcup G(\eta)\}$ is an uncountable antichain of $\mc{Q}\times S$.  This contradicts our assumption that $\Vdash_\mc{Q} S$ is  Suslin.  This contradiction shows that for any $\alpha<\omega_2$, if $\mc{Q}\in V[G_\alpha]$, then
 $$V[G_\alpha]\vDash ``\Vdash_{\mc{Q}} \varphi_0(c)".$$
 
 But then for some $\eta<\omega_2$ , $\mc{Q}$ is isomorphic to $\dot{\mc{Q}}_\eta^{G_\eta}$   and $\{D_\alpha: \alpha<\omega_1\}\in V[G_\eta]$.  So there is a filter meeting them all. This shows that $V[G]\vDash $ MA$_{\omega_1}(S)$ and finishes the proof of the theorem.
 \end{proof}
Now Theorem \ref{thm1} follows from Corollary \ref{cor} and Theorem \ref{varphi0}.
%\begin{cor} $\mathrm{MA}_{\omega_1}(S)[S]$ is consistent with $\mathrm{Pr}_1(\omega_1, 2, \omega)$. In particular, $\mathrm{MA}_{\omega_1}(S)[S]$ does not imply $\mc{K}_2$. \end{cor}

The proof of Theorem \ref{varphi0} suggests a new forcing axiom.
\begin{defn}
For a complete coherent Suslin tree $S\subset 2^{<\omega_1}$ and a coloring $c: [S]^2\ra 2$, $\mathrm{MA}_{\omega_1}(S, c, \varphi_0)$ is the assertion that
\begin{enumerate}[(i)]
\item $\varphi_0(c)$ holds;
\item if $\mc{P}$ is a c.c.c. forcing that preserves $\varphi_0(c)$ and $\{D_\alpha: \alpha<\omega_1\}$ is a collection of dense subsets of $\mc{P}$, then there is a filter meeting them all.
\end{enumerate}
\end{defn}
Similar as  MA$_{\omega_1}(S)[S]$, we say that MA$_{\omega_1}(S, c, \varphi_0)[S]$  holds if  the universe is a forcing extension by $S$ over a model of MA$_{\omega_1}(S, c, \varphi_0)$.

\begin{prop}\label{prop}
For a complete coherent Suslin tree $S\subset 2^{<\omega_1}$ and a coloring $c: [S]^2\ra 2$, the following are equivalent.
\begin{enumerate}
\item $\mathrm{MA}_{\omega_1}(S, c, \varphi_0)$.
\item $\mathrm{MA}_{\omega_1}(S)$ and $\varphi_0(c)$.
\end{enumerate}
\end{prop}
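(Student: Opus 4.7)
The direction $(2)\Rightarrow(1)$ is nearly immediate: clause $(i)$ of MA$_{\omega_1}(S,c,\varphi_0)$ is precisely $\varphi_0(c)$, while for clause $(ii)$, any c.c.c.\ poset $\mc{P}$ that preserves $\varphi_0(c)$ also preserves the Suslinity of $S$ (since $\varphi_0(c)\Rightarrow S$ Suslin), so MA$_{\omega_1}(S)$ delivers the required generic filter.

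For the reverse direction, only MA$_{\omega_1}(S)$ requires argument; I will prove the stronger statement that, assuming MA$_{\omega_1}(S,c,\varphi_0)$, every c.c.c.\ poset $\mc{Q}$ with $\Vdash_{\mc{Q}}``S$ is Suslin'' automatically preserves $\varphi_0(c)$, after which clause $(ii)$ applies directly. Arguing by contradiction, suppose some $p\in\mc{Q}$ forces the failure of $\varphi_0(c,\dot B_0,\dots,\dot B_{m-1})$, witnessed by $\mc{Q}$-names $\dot f:\omega_1\to S$ and $\dot g_i:\omega_1\to[\dot B_i]^{<\omega}$ with $\dot B_i\in\dot{\mc{C}}^0$. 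Mimicking items (1)--(4) of the proof of Theorem \ref{varphi0} in the present universe (which satisfies $\varphi_0(c)$), strengthen $p$ to decide stems $t_i$ and sizes $k_i$, select $p_\alpha<_{\mc{Q}}p$ deciding $s_\alpha=\dot f(\alpha)$ and $y^i_\alpha=\dot g_i(\alpha)$, and thin to an uncountable $\Gamma\subset\omega_1$ along which the heights $ht(s_\alpha)$ strictly increase, the sets $\bigcup_{i<m}\bigcup y^i_\alpha$ are pairwise separated in height, and each $A_i:=\{a:a\in y^i_\alpha\text{ for some }\alpha\in\Gamma\}$ belongs to $\mc{C}^0$.

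The assumption that $p$ forces the failure of $\varphi_0$ for $(\dot f,\langle\dot g_i:i<m\rangle)$ then gives the key dichotomy, namely item (5) of the cited proof: whenever $\alpha<\beta$ in $\Gamma$ and $p_\alpha$ is compatible with $p_\beta$ in $\mc{Q}$, either $s_\alpha$ and $s_\beta$ are incomparable in $S$, or some $a\in y^i_\alpha$, $b\in y^i_\beta$ violate the $c$-homogeneity required by Definition \ref{defQ}. Now $\mc{Q}_{\langle y^i_\alpha:i<m,\alpha\in\Gamma\rangle}$ is c.c.c.\ and preserves $\varphi_0(c)$ by Lemma \ref{preservation}, so clause $(ii)$ of MA$_{\omega_1}(S,c,\varphi_0)$ applies to it; meeting the $\omega_1$ many dense sets $\{q:q\cap(\Gamma\setminus\gamma)\neq\emptyset\}$ for $\gamma<\omega_1$ yields an uncountable $X\subseteq\Gamma$ on which every pair enjoys the homogeneity of Definition \ref{defQ}. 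Combined with the dichotomy, for all distinct $\alpha,\beta\in X$ either $p_\alpha,p_\beta$ are incompatible in $\mc{Q}$ or $s_\alpha,s_\beta$ are incomparable in $S$; hence $\{(p_\alpha,s_\alpha):\alpha\in X\}$ is an uncountable antichain in $\mc{Q}\times S$, contradicting the assumption that $\mc{Q}$ preserves $S$ as Suslin.

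The main obstacle is largely cosmetic: one must recognise the final portion of Theorem \ref{varphi0}'s proof as a self-contained preservation result, with the role of the iterand generic $\dot{\mc{Q}}_\eta^{G_\eta}=\mc{Q}_{\langle y^i_\alpha\rangle}$ there taken over by the filter furnished here by MA$_{\omega_1}(S,c,\varphi_0)(ii)$. Once this translation is made, the remaining combinatorics -- $\Delta$-system thinning on the supports, separating the supports of the $y^i_\alpha$ by height, and verifying $A_i\in\mc{C}^0$ -- is exactly the bookkeeping already executed in the proof of Theorem \ref{varphi0}, so no new combinatorial input is needed.
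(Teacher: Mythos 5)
Your proposal is correct and follows essentially the same route as the paper: $(2)\Rightarrow(1)$ via ``$\varphi_0(c)$ implies $S$ is Suslin,'' and $(1)\Rightarrow(2)$ by showing every c.c.c.\ poset preserving $S$ must preserve $\varphi_0(c)$, rerunning the antichain construction from Theorem \ref{varphi0} with the generic for $\mc{Q}_{\langle y^i_\alpha\rangle}$ replaced by a filter supplied by clause $(ii)$ of $\mathrm{MA}_{\omega_1}(S,c,\varphi_0)$ together with Lemma \ref{preservation}. Your write-up just makes explicit the dichotomy and the dense sets that the paper leaves to the reader.
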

\begin{proof}
(1)$\Rightarrow$(2). We only need to prove MA$_{\omega_1}(S)$. Fix a c.c.c. poset $\mc{P}$ that preserves $S$. It suffices to prove that $\Vdash_\mc{P} \varphi_0(c)$. 

Suppose otherwise. The argument in Lemma \ref{preservation} or Theorem \ref{varphi0} shows that for some $0<m<\omega$, $A_0,...,A_{m-1}$ in $\mc{C}^0$ and pairwise distinct $\langle s_\alpha: \alpha<\omega_1\rangle$, pairwise disjoint $\langle y^i_\alpha\in [A_i]^{<\omega}: \alpha<\omega_1\rangle $ for each $i<m$, $\mc{Q}_{\langle y^i_\alpha: i<m,\alpha<\omega_1\rangle}$ forces an uncountable antichain of $\mc{P}\times S$.

But by Lemma \ref{preservation}, $\mc{Q}_{\langle y^i_\alpha: i<m,\alpha<\omega_1\rangle}$ is c.c.c. and preserves $\varphi_0(c)$. So applying MA$_{\omega_1}(S, c, \varphi_0)$, we get an uncountable antichain of $\mc{P}\times S$. This contradicts our assumption that $\mc{P}$ is c.c.c. and preserves   $S$.\medskip

(2)$\Rightarrow$(1). Since  $\varphi_0(c)$ implies that $S$ is Suslin, any $\omega_1$-preserving forcing that preserves $\varphi_0(c)$ preserves $S$.
\end{proof}
\textbf{Remark.} Although MA$_{\omega_1}(S, c, \varphi_0)$ is not a new property, it may make a difference if we change the forcing axiom. For example, if we may get PFA$(S, c, \varphi_0)$, %(e.g., using Woodin's $\mathbb{P}_{\max}$ forcing method), 
then the new forcing axiom might be totally different from PFA$(S)+\varphi_0(c)$. %We shall not go into details since it is out of the scope of the paper.\medskip

Together with Theorem \ref{varphi0} and Corollary \ref{cor}, we get the following.
\begin{cor}
\begin{enumerate}
\item $\mathrm{MA}_{\omega_1}(S, c, \varphi_0)$ is consistent.
\item $\mathrm{MA}_{\omega_1}(S, c, \varphi_0)[S]$ implies $\mathrm{MA}_{\omega_1}(S)[S]$ and $\mathrm{Pr}_1(\omega_1, 2, \omega)$.
\end{enumerate}
\end{cor}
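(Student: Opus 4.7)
The plan is to derive both claims directly by combining Theorem \ref{varphi0}, Proposition \ref{prop}, and Corollary \ref{cor}, since all the real work has already been done in the preceding results.

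For part (1), I would start from the model produced by Theorem \ref{varphi0}, which gives a complete coherent Suslin tree $S\subset 2^{<\omega_1}$ and a coloring $c:[S]^2\ra 2$ such that both $\mathrm{MA}_{\omega_1}(S)$ and $\varphi_0(c)$ hold. The direction (2)$\Rightarrow$(1) of Proposition \ref{prop} then immediately upgrades this to $\mathrm{MA}_{\omega_1}(S,c,\varphi_0)$, establishing its consistency. No further verification is needed here.

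For part (2), let $V$ be a ground model of $\mathrm{MA}_{\omega_1}(S,c,\varphi_0)$ and let $V[b]$ be the extension by $S$, so that $V[b]$ models $\mathrm{MA}_{\omega_1}(S,c,\varphi_0)[S]$. First, by clause (i) of the definition of $\mathrm{MA}_{\omega_1}(S,c,\varphi_0)$, we have $V\vDash \varphi_0(c)$, and by the direction (1)$\Rightarrow$(2) of Proposition \ref{prop}, we also have $V\vDash \mathrm{MA}_{\omega_1}(S)$. Consequently $V[b]$ is, by definition, a model of $\mathrm{MA}_{\omega_1}(S)[S]$. For the second conclusion, apply Corollary \ref{cor} in $V$: since $\varphi_0(c)$ holds there, $\Vdash_S \mathrm{Pr}_1(\omega_1,2,\omega)$, and hence $V[b]\vDash \mathrm{Pr}_1(\omega_1,2,\omega)$.

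There is no real obstacle here; the statement is a bookkeeping corollary whose content is just to package Theorem \ref{varphi0}, Proposition \ref{prop}, and Corollary \ref{cor} in the language of the new forcing axiom $\mathrm{MA}_{\omega_1}(S,c,\varphi_0)$ and its $[S]$-version. The only point worth being careful about is to invoke Proposition \ref{prop} in the correct direction for each part — (2)$\Rightarrow$(1) for obtaining consistency of the new axiom, and (1)$\Rightarrow$(2) for unpacking it into $\mathrm{MA}_{\omega_1}(S)$ together with $\varphi_0(c)$ before passing through the Suslin tree extension.
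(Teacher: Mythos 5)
Your proposal is correct and matches the paper's intended argument exactly: the paper derives this corollary by combining Theorem \ref{varphi0} with Proposition \ref{prop} (for part (1)) and unpacking the axiom via Proposition \ref{prop} and Corollary \ref{cor} (for part (2)), which is precisely your route. The care you take in invoking the two directions of Proposition \ref{prop} appropriately is the only substantive point, and you handle it correctly.
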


\textbf{Remark.} Assume MA$_{\omega_1}(S, c, \varphi_0)$. Then for any $\mc{A}=\{(s_\alpha, a_\alpha)\in S\times [\omega_1]^n: \alpha<\omega_1\}$, viewed as an $S$-name of an uncountable pairwise disjoint subset of $[\omega_1]^n$, the proof of Lemma \ref{lem1} shows that  there are $s\in S$, $f: \omega_1\ra \omega_1$ and $g: \omega_1\ra \omega_1$ such that $D_{s_{f(\alpha)}, s_{g(\alpha)}}=\{ht(s)\}$ and  $B=\{b_\alpha: \alpha<\omega_1\}\in \mc{C}^0_n$ where for any $i<n$, 
\begin{itemize} 
\item $b_\alpha(i)=s_{f(\alpha)}\up_{a_{f(\alpha)}(i)}$ and $b_\alpha(n+i)=s_{g(\alpha)}\up_{a_{g(\alpha)}(i)}$.
\end{itemize}
Applying MA$_{\omega_1}(S, c, \varphi_0)$ to $\mc{Q}_{\langle \{b_\alpha\}: \alpha<\omega_1\rangle}$, we get   $\Gamma\in [\omega_1]^{\omega_1}$ such that 
\begin{itemize}
\item for any $\alpha<\beta$ in $\Gamma$, there is $j<2$ such that for any $k,k'<n$,
$$c(b_\alpha(k), b_\beta(k'))=j \text{ and } c(b_\alpha(n+k), b_\beta(n+k'))=1-j.$$
\end{itemize}
Then $\mc{B}=\{(s_{f(\alpha)}, a_{f(\alpha)}): \alpha\in \Gamma\}\cup \{(s_{g(\alpha)}, a_{g(\alpha)}): \alpha\in \Gamma\}$ is an uncountable subset of $\mc{A}$ that is ``half'' 0-homogeneous (and ``half'' 1-homogeneous). I.e., for any $\alpha<\beta$ in $\Gamma$, either
\begin{itemize}
\item $c[\{s_{f(\alpha)}\up_{a_{f(\alpha)}(i)}: i<n\} \times \{s_{f(\beta)}\up_{a_{f(\beta)}(i)}: i<n\}]=\{0\}$ or
\item  $c[\{s_{g(\alpha)}\up_{a_{g(\alpha)}(i)}: i<n\} \times \{s_{g(\beta)}\up_{a_{g(\beta)}(i)}: i<n\}]=\{0\}$.
\end{itemize}
In other words, for each pair $\alpha<\beta$ in $\Gamma$, one of the two choices --- $(f(\alpha), f(\beta))$ and $(g(\alpha), g(\beta))$ --- gives a 0-homogeneous pair.
So our strategy of constructing MA$_{\omega_1}(S)[S]$ together with Pr$_1(\omega_1, 2, \omega)$ can be viewed in this way: 
\begin{itemize}
\item we force an uncountable ``half'' 0 (and 1) homogeneous set for every possible $S$-name of an uncountable pairwise disjoint family;
\item in the iteration, we force every small c.c.c. poset that does not destroy the possibility of   adding ``half'' 0 (and 1) homogeneous sets.
\end{itemize}\medskip

The key is that ``half'' homogeneity is strong enough to be preserved in the iteration process but not too strong to add an uncountable homogeneous set. In the following section, we will apply this idea to investigate stronger colorings.\medskip

However, this idea may not work for the consistency of PFA$(S)[S]+\neg\mc{K}_2$. In the rest of the section, we will prove that $\varphi_0(c)$ is rejected by PFA$(S)$.

We will use the following notation for convenience.
   \begin{defn}
   For $\alpha<\omega_1$, $\Psi_\alpha: 2^{(\alpha, \omega_1)} \ra 2^{(\alpha, \omega_1)}$ where $2^{(\alpha, \omega_1)}=\bigcup_{\beta\in (\alpha, \omega_1)} 2^\beta$ is defined by for any $x\in 2^{(\alpha, \omega_1)}$, 
   $$\Psi_\alpha(x)=x\up_\alpha\sm (1-x(\alpha))\sm x\up_{(\alpha, ht(x))},$$
    i.e.,     $\Psi_\alpha(x)$ is the node in $2^{ht(x)}$ such that $D_{x, \Psi_\alpha(x)}=\{\alpha\}$.
      \end{defn}

For the rest of this section, for $a\in [S]^{<\omega}$, denote 
$$\Delta(a)=\min\{\Delta(s,t): s,t\in a\}\text{ and }$$
$$\wedge (a)=a(0)\up_{\Delta(a)}.$$ 
Note that   $\Delta(a)\leq ht(s)$ for any $s\in a$ since $\Delta(s,s)=ht(s)$. $\wedge (a)$ is the maximal common part of $a$.

We will need  the following property.
\begin{lem}\label{splitting tree}
Assume $\varphi_0(c)$. Suppose that 
\begin{enumerate}[$(i)$]
\item $0<n<\omega$, $A\subset [S]^2$ and $x\in S$;
\item $T\subset S^{\leq n}$ such that $\emptyset\in T$ and for any $\sigma\in T\cap S^{<n}$, $\{t\in S: \sigma^\smallfrown t\in T\}$ is uncountable;
\item for any $\sigma\in T\cap S^n$,  $\Delta(\{\sigma(i): i<n\})\geq otp(ht(\sigma(0))\cap\{ht(s): \langle s\rangle \in T\} )$;\footnote{For $\Gamma\subset Ord$, $otp(\Gamma)$ is the order type of $\Gamma$.}
\item   $\{t: \langle t\rangle\in T\}$ is dense above $x$;
\item  $a(0)<_S a(1)$ whenever $a\in A$ and $\{a(0): a\in A\}$ is dense above $x$.
\end{enumerate}
Then there are $a\in A$ and $\sigma\in T\cap S^n$ such that $a(1)<_S \sigma(0)$ and $c(a(0), \sigma(i))=0$ for any $i<n$.
\end{lem}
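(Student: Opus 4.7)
The plan is to proceed by contradiction, assuming no such pair $(a,\sigma)$ exists, and to derive a contradiction by applying $\varphi_0(c)$ to a carefully built family of $\mc{C}^0$-sets. Fix a node $s>_S x$ whose height will play the role of the common stem. For each $\alpha<\omega_1$ I would use condition (iv) and the branching assumption (ii), together with the splitting assumption (iii), to choose $\sigma_\alpha\in T\cap S^n$ with $\wedge\sigma_\alpha >_S s\sm 0$ and $\Delta(\sigma_\alpha)$ so large that, by condition (v), we can pick $a_\alpha\in A$ with $a_\alpha(1)<_S \wedge\sigma_\alpha$. This forces $a_\alpha(0),a_\alpha(1)<_S \sigma_\alpha(i)$ for every $i<n$, so that both entries of $a_\alpha$ and all entries of $\sigma_\alpha$ extend $s\sm 0$. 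By complete coherence of $S$ together with density, I would pick in parallel, for each $i<n$, dual nodes $\tilde a^i_\alpha\in A$ and $\tilde\sigma^i_\alpha\in T\cap S^n$ lying above $s\sm 1$ with $\tilde a^i_\alpha(0)>_S \Psi_{ht(s)}(\sigma_\alpha(i))$.

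Next I would package these into $\mc{C}^0$-families: for each pair $(i,i')\in n\times n$ define $b^{i,i'}_\alpha$ of length $2m$ whose first $m$ entries are the $<_S$-chain $a_\alpha(0)<_S a_\alpha(1)<_S \wedge\sigma_\alpha<_S\sigma_\alpha(i)$ above $s\sm 0$, and whose last $m$ entries are the chain $\tilde a^i_\alpha(0)<_S\tilde a^i_\alpha(1)<_S\wedge\tilde\sigma^i_\alpha<_S\tilde\sigma^i_\alpha(i')$ above $s\sm 1$. The equation $D_{\sigma_\alpha(i),\tilde a^i_\alpha(0)}=\{ht(s)\}$ holds by construction, so after a standard thinning $B^{i,i'}:=\{b^{i,i'}_\alpha:\alpha<\omega_1\}$ lies in $\mc{C}^0_m$. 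Applying $\varphi_0(c,\{B^{i,i'}\})$ with $s_\alpha=\sigma_\alpha(0)$ and singletons $x^{i,i'}_\alpha=\{b^{i,i'}_\alpha\}$, I obtain $\alpha<\beta$ with $\sigma_\alpha(0)<_S\sigma_\beta(0)$ and colors $j_{i,i'}$ for each pair; since $b^{i,i'}_\alpha(0)=a_\alpha(0)$ is independent of $(i,i')$, the $\varphi_0$-clause forces $j_{i,i'}=c(a_\alpha(0),a_\beta(0))$ for every $(i,i')$. Call this common value $j$.

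In the case $j=0$, the first-half conclusion of $\varphi_0$ gives $c(a_\alpha(0),\sigma_\beta(i))=0$ for every $i<n$, and since $a_\alpha(1)<_S \wedge\sigma_\alpha\leq_S \sigma_\alpha(0)<_S\sigma_\beta(0)$ the pair $(a_\alpha,\sigma_\beta)$ violates the contradiction hypothesis. In the case $j=1$, the second-half conclusion yields $c(\tilde a^i_\alpha(0),\tilde\sigma^i_\beta(i'))=0$ for all $i,i'<n$; so for each fixed $i$ the dual pair $(\tilde a^i_\alpha,\tilde\sigma^i_\beta)$ carries the required uniform coloring, and provided one can arrange $\tilde a^i_\alpha(1)<_S \tilde\sigma^i_\beta(0)$ we obtain the symmetric contradiction.

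The main obstacle is precisely this last verification: in the case $j=1$ the dual pair $(\tilde a^i_\alpha,\tilde\sigma^i_\beta)$ must actually be valid in the lemma's sense, and not merely carry the right coloring. This forces a delicate coordination of the dual choices $\tilde\sigma^i_\alpha$ across $\alpha$, arranging that $\sigma_\alpha(0)<_S\sigma_\beta(0)$ implies $\tilde\sigma^i_\alpha(0)<_S\tilde\sigma^i_\beta(0)$. The plan is to exploit condition (iii) — ensuring $\wedge\sigma_\alpha,\wedge\sigma_\beta$ sit at heights cofinal in the height-set $H=\{ht(t):\langle t\rangle\in T\}$ — together with density of $T_1=\{t:\langle t\rangle\in T\}$ above $x$ and the complete coherence of $S$, so that $\Psi_{ht(s)}$ carries the chain of $\sigma_\alpha(0)$'s into a parallel chain through which the $\tilde\sigma^i_\alpha(0)$'s can be threaded in a branch-preserving way.
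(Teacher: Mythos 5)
Your setup --- packaging an $A$-pair and a $T$-tuple above $s\sm 0$ together with their $\Psi_{ht(s)}$-duals above $s\sm 1$ into elements of $\mc{C}^0$, pinning the $\varphi_0$-colour to the single bit $j=c(a_\alpha(0),a_\beta(0))$ via the shared first coordinate, and winning on the primal side when $j=0$ --- is sound and close in spirit to the paper's construction. But the gap you flag in the case $j=1$ is genuine, and the repair you sketch cannot work. You need $\tilde a^i_\alpha(1)<_S\tilde\sigma^i_\beta(0)$ for some $i$, where the pair $\alpha<\beta$ is handed to you by $\varphi_0$ and is not known when stage $\alpha$ is built; so the single node $\tilde\sigma^i_\beta(0)$ would have to lie above $\tilde a^i_\alpha(1)$ for every $\alpha$ with $\sigma_\alpha(0)<_S\sigma_\beta(0)$. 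Those nodes do not lie on one branch: for $i>0$ the nodes $\sigma_\alpha(i)$ and $\sigma_{\alpha'}(i)$ branch off the trunk below $\sigma_\beta(0)$ at the (different) levels $\Delta(\sigma_\alpha),\Delta(\sigma_{\alpha'})$ and are in general incomparable, hence so are their $\Psi_{ht(s)}$-images and everything above them; and even for $i=0$, hypothesis $(v)$ only makes $\{a(0):a\in A\}$ dense, so $\tilde a^0_\alpha(1)$ is an uncontrolled node above $\Psi_{ht(s)}(\sigma_\alpha(0))$ whose values beyond height $ht(\sigma_\alpha(0))$ bear no relation to $\sigma_\beta(0)$. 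Thus ``$\sigma_\alpha(0)<_S\sigma_\beta(0)$ implies $\tilde\sigma^i_\alpha(0)<_S\tilde\sigma^i_\beta(0)$'' is not arrangeable, and your dual witness fails the ordering clause $a(1)<_S\sigma(0)$ of the conclusion.

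The paper avoids this by not pre-committing stage $\alpha$ to any dual $T$-tuple. Each stage carries an $n^{\leq n}$-indexed family of sextuples $b^\alpha_\tau$, each containing an $A$-pair on both sides of $x$, threaded so that the $(2)$-coordinates of the $n$ immediate successors of an index form an element of $T$, the $(5)$-coordinates along any branch of the index tree form an element of $T$, and the whole stage-$\alpha$ configuration coheres up to height $\alpha$. After one application of $\varphi_0$, the single relation $b^\alpha_{\langle 0\rangle}(4)<_S b^\beta_{\langle 0\rangle}(4)$ propagates to $b^\alpha_{\langle 0\rangle}(1)<_S b^\beta_\tau(2)$ and $b^\alpha_{\langle 0\rangle}(4)<_S b^\beta_\tau(5)$ for \emph{every} index $\tau$ --- exactly the ordering you cannot obtain. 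The price is that the colour now varies with $\tau$, so the endgame is a tree dichotomy rather than your single bit: either some index has all $n$ immediate successors coloured $0$ against $b^\alpha_{\langle 0\rangle}(0)$ (win with the sibling tuple of $(2)$-coordinates), or one descends along a branch coloured $1$ on the primal side, hence $0$ on the dual side (win with the branch tuple of $(5)$-coordinates). If you wish to keep your flat $(i,i')$-indexing, the ordering problem forces a substantive redesign --- anchoring the $\varphi_0$ application at the top of the dual configuration instead of at $\sigma_\alpha(0)$, and using hypothesis $(iii)$ to push $\Delta(\sigma_\beta)$ above $\beta$ so that one comparability propagates through the entire stage-$\beta$ configuration via $\Psi_{ht(s)}$ --- not the branch-threading you describe.
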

\begin{proof}
%Going to a subtree of $T$ and extending $x$ if necessary, we may assume that for any $\sigma\in T$, for any $i<|\sigma|$, $\sigma(i)>_S x$.

Let $B$ be the collection of all $b\in [S]^6$ such that
\begin{enumerate}
\item  $x^\smallfrown 0<_S b(0)<_S b(1)<_S b(2)$ and $x^\smallfrown 1<_S b(3)<_S b(4)<_S b(5)$;
\item $ht(b(2))< ht(b(3))$ and $D_{b(2), b(3)}=\{ht(x)\}$;
\item $\{b(0), b(1)\}\in A$ and $\{b(3), b(4)\}\in A$.
\end{enumerate}
Fix $f: T\cap S^{<n}\ra S$ such that for any $\sigma\in T\cap S^{<n}$, $\{t\in S: \sigma^\smallfrown t\in T\}$ is dense above $f(\sigma)$. Note that by $(iii)$, for $\sigma\in T\cap S^{<n}\setminus \{\emptyset\}$, 
$$f(\sigma)\geq_S \sigma(0)\up_{otp(ht(\sigma(0))\cap\{ht(s): \langle s\rangle \in T\})}.$$

For each $\alpha<\omega_1$, we will choose   $\langle b^\alpha_\sigma\in B: \sigma\in n^{\leq n}\setminus\{\emptyset\}\rangle$ such that 
\begin{enumerate}\setcounter{enumi}{3}
\item for any $\sigma<_{ho} \tau$ in $n^{\leq n}\setminus\{\emptyset\}$, $ht(b^\alpha_\sigma(5))<ht(b^\alpha_\tau(0))$;
\item for any $\sigma\in n^{< n}$ and $j< n$, $\langle b^\alpha_{\sigma^\smallfrown i}(2): i\leq j \rangle\in T$;
\item for any $\sigma\in n^{\leq n}\setminus\{\emptyset\}$, $\langle b^\alpha_{\sigma\up_i}(5): 0<i\leq |\sigma|\rangle\in T$;
\item $\Delta(\{b^\alpha_\sigma(0): \sigma\in n^{\leq n}\setminus\{\emptyset\}\})\geq \alpha$.
\end{enumerate}
Fix $\alpha$ and we choose $b^\alpha_\sigma$s' by induction on $<_{ho}$. First choose $x_\alpha$ extending $x^\smallfrown 0$ of height $\geq\alpha$. Let 
$$y_\alpha=\Psi_{ht(x)}(x_\alpha).$$
To get (7), it suffices to choose $b^\alpha_\sigma$s' such that 
\begin{enumerate}\setcounter{enumi}{7}
\item $b^\alpha_\sigma(0)>_S x_\alpha$ whenever $\sigma\in n^{\leq n}\setminus\{\emptyset\}$.
 \end{enumerate}

 Suppose that $\sigma\in n^{\leq n}\setminus\{\emptyset\}$ and $b^\alpha_\tau$ has been chosen for $\emptyset<_{ho} \tau<_{ho} \sigma$. The general procedure of choosing $b^\alpha_\sigma$ is the following:
 \begin{itemize}
 \item we first choose appropriate $u_\sigma$ and $ v_\sigma$ such that $v_\sigma=\Psi_{ht(x)}(u_\sigma)\geq_S y_\alpha$;
 \item then arbitrarily choose   $b^\alpha_\sigma(0)<_S b^\alpha_ \sigma(1)$ such that $u_ \sigma <_S b^\alpha_ \sigma(0)$, $\{b^\alpha_ \sigma(0), b^\alpha_ \sigma(1)\}\in A$ and (4) holds;
 \item then choose an appropriate $b^\alpha_\sigma(2)$;
 \item then arbitrarily choose $b^\alpha_ \sigma(3)<_S b^\alpha_ \sigma(4)$ such that $\Psi_{ht(x)}(b^\alpha_ \sigma(2))<_S b^\alpha_ \sigma(3)$ and $\{b^\alpha_ \sigma(3), b^\alpha_ \sigma(4)\} \in A$;
 \item finally choose an appropriate $b^\alpha_\sigma(5)$.
 \end{itemize}
 Note that the choice of $b^\alpha(0),b^\alpha(1),b^\alpha(3)$ and $b^\alpha(4)$ is trivial by $(v)$. So we only  describe the method of choosing $u_\sigma$ (or $v_\sigma$), $b^\alpha_\sigma(2)$ and $b^\alpha_\sigma(5)$ by cases.
 
 \begin{case}
 $\sigma=\langle 0\rangle$.
 \end{case}
Let $u_{\langle 0\rangle}= x_\alpha$.

 $b^\alpha_{\langle 0\rangle}(2)$ is chosen to be any extension of $b^\alpha_{\langle 0\rangle}(1)$ such that $\langle b^\alpha_{\langle 0\rangle}(2)\rangle \in T$ and 
\begin{enumerate}
\item[(9)$_{\langle 0\rangle}$] $otp(ht(b^\alpha_{\langle 0\rangle}(2))\cap \{ht(s): \langle s\rangle \in T\})>ht(u_{\langle 0\rangle})$.
\end{enumerate}

$b^\alpha_{\langle 0\rangle}(5)$ is any extension of $b^\alpha_{\langle 0\rangle}(4)$ such that $\langle b^\alpha_{\langle 0\rangle}(5)\rangle\in T$. 

It is straightforward to check (1)-(3). So $b^\alpha_{\langle 0\rangle}\in B$. Also, (4)-(6) and (8) follow from our construction.

 \begin{case}
 $\sigma=\langle i\rangle$ for some $0<i<n$. 
 \end{case}
First note by induction hypothesis (5), 
 $$\langle b^\alpha_{{\langle j\rangle}}(2): j< i \rangle\in T.$$
 Let $u_{\langle i\rangle}=f( \langle b^\alpha_{{\langle j\rangle}}(2): j< i \rangle)$. By $(iii)$, (9)$_{\langle 0\rangle}$ and the definition of $f$, 
 $$u_{\langle i\rangle}>_S b^\alpha_{\langle 0\rangle}(2)\up_{ht(u_{\langle 0\rangle})}=u_{\langle 0\rangle}=x_\alpha.$$

Then, by definition of $u_{\langle i\rangle}$ and the fact that $u_{\langle i\rangle}<_S b^\alpha_{\langle i\rangle}(0)<_S b^\alpha_{\langle i\rangle}(1)$, choose $b^\alpha_{\langle i\rangle}(2)$ to be any extension of $b^\alpha_{\langle i\rangle}(1)$ such that $\langle b^\alpha_{\langle j\rangle}(2): j< i \rangle^\smallfrown b^\alpha_{\langle i\rangle}(2)  \in T$ and
\begin{enumerate}
\item[(9)$_{\langle i\rangle}$] $otp(ht(b^\alpha_{\langle i\rangle}(2))\cap \{ht(s): \langle s\rangle \in T\})>ht(u_{\langle i\rangle})$.
\end{enumerate}

Finally choose $b^\alpha_{\langle i\rangle}(5)$ to be any extension of $b^\alpha_{\langle i\rangle}(4)$ such that $\langle  b^\alpha_{\langle i\rangle}(5)\rangle\in T$. 

It is straightforward to check that (1)-(6) and (8)  hold.

\begin{case}
$\sigma=\tau^\smallfrown 0$ for some $\tau\neq \emptyset$.
\end{case}
By induction hyposthesis (6), 
$$\langle b^\alpha_{\tau\up_i}(5): 0<i\leq |\tau|\rangle\in T.$$
Let $v_\sigma=f(\langle b^\alpha_{\tau\up_i}(5): 0<i\leq |\tau|\rangle)$ and $u_\sigma=\Psi_{ht(x)}(v_\sigma)$.

Note by our choice of $b^\alpha_{\langle \tau(0)\rangle}$ and  (9)$_{\langle \tau(0)\rangle}$,
$$otp(ht(b^\alpha_{\langle \tau(0)\rangle}(5))\cap \{ht(s): \langle s\rangle \in T\})>ht(u_{\langle \tau(0)\rangle}).$$
Together with $(iii)$, 
$$v_\sigma>_S b^\alpha_{\langle \tau(0)\rangle}(5)\up_{ht(u_{\langle \tau(0)\rangle})}=v_{\langle \tau(0)\rangle} \geq_S y_\alpha.$$

Then choose $b^\alpha_\sigma(2)$ to be any extension of $b^\alpha_\sigma(1)$ such that $\langle b^\alpha_\sigma(2)\rangle \in T$ and 
\begin{enumerate}\setcounter{enumi}{8}
\item[(9)$_{\tau\sm 0}$] $otp(ht(b^\alpha_\sigma(2))\cap \{ht(s): \langle s\rangle \in T\})>ht(u_\sigma)$.
\end{enumerate}

Now note that $b^\alpha_\sigma(4)$ extends $v_\sigma$. Hence we can choose $b^\alpha_\sigma(5)$ to be any extension of  $b^\alpha_\sigma(4)$ such that $\langle b^\alpha_{\tau\up_i}(5): 0<i\leq |\tau|\rangle^\smallfrown b^\alpha_\sigma(5)\in T$. 

It is straightforward to check that (1)-(6) and (8)  hold. 

\begin{case}
$\sigma=\tau^\smallfrown i$ for some $\tau\neq \emptyset$ and $0<i<n$.
\end{case}
 By induction hypothesis (5), 
 $$\langle b^\alpha_{\tau\sm j}(2): j< i \rangle\in T.$$
 Let $u_\sigma=f( \langle b^\alpha_{\tau^\smallfrown j}(2): j< i \rangle)$.

By $(iii)$, (9)$_{\tau\sm 0}$ and our choice of $b^\alpha_{\tau\sm 0}$, 
$$u_\sigma>_S b^\alpha_{\tau\sm 0}(2)\up_{ht(u_{\tau^\smallfrown 0})}=u_{\tau^\smallfrown 0}\geq_S x_\alpha.$$
Hence $v_\sigma>_S v_{\tau^\smallfrown 0}$.

Then, by our choice of $u_\sigma$, choose $b^\alpha_\sigma(2)$ to be any extension of $b^\alpha_\sigma(1)$ such that $\langle b^\alpha_{\tau^\smallfrown j}(2): j< i \rangle^\smallfrown b^\alpha_\sigma(2)  \in T$.

Note that $b^\alpha_\sigma(4)$ extends $v_\sigma>_S v_{\tau^\smallfrown 0}$. By definition of $v_{\tau^\smallfrown 0}$ in Case 3, we can choose $b^\alpha_\sigma(5)$ to be any extension of  $b^\alpha_\sigma(4)$ such that $\langle b^\alpha_{\tau\up_i}(5): 0<i\leq |\tau|\rangle^\smallfrown b^\alpha_\sigma(5)\in T$. 

It is straightforward to check that (1)-(6) and (8)  hold.\bigskip

This finishes the   construction at $\alpha$.

Recall that $b^\alpha_\sigma\cap S\up_\alpha=\emptyset$ for any $\alpha$ and $\sigma$. So we can find $\Gamma\in [\omega_1]^{\omega_1}$ such that $ht(b^\alpha_\sigma(5))<\beta\leq ht(b^\beta_\tau(0))$ whenever $\alpha<\beta$ are in $\Gamma$.

Together with (1)-(4),  $D=\{b^\alpha_\sigma: \alpha\in \Gamma, \sigma\in n^{\leq n}\setminus\{\emptyset\}\}$ is in $\mc{C}^0$. Applying $\varphi_0(c, D)$ to $\langle b^\alpha_{\langle 0\rangle}(4): \alpha\in \Gamma\rangle$ and $\langle \{b^\alpha_\sigma: \sigma\in n^{\leq n}\setminus\{\emptyset\}\}: \alpha\in \Gamma\rangle$, we get $\alpha<\beta$ in $\Gamma$ such that
\begin{enumerate}\setcounter{enumi}{9}
\item $b^\alpha_{\langle 0\rangle}(4)<_S b^\beta_{\langle 0\rangle}(4)$;
\item for any $\tau$ in $n^{\leq n}\setminus\{\emptyset\}$, there is $j<2$ such that
$$c(b^\alpha_{\langle 0\rangle}(0), b^\beta_\tau(2))=j \text{ and } c(b^\alpha_{\langle 0\rangle}(3), b^\beta_\tau(5))=1-j.$$
\end{enumerate}

Now by (7), (10), the definition of $\mc{C}^0$  and our choice of $\Gamma$, for any $\tau$ in $n^{\leq n}\setminus\{\emptyset\}$,  
$$b^\alpha_{\langle 0\rangle}(1)<_S b^\beta_\tau(2)\text{ and }  b^\alpha_{\langle 0\rangle}(4)<_S b^\beta_\tau(5).$$

First assume that for some $\tau\in n^{< n}$, 
$$c(b^\alpha_{\langle 0\rangle}(0), b^\beta_{\tau\sm i}(2))=0\text{ for all }i<n.$$
 Then the conclusion of the lemma holds since  by (3), $\{b^\alpha_{\langle 0\rangle}(0), b^\alpha_{\langle 0\rangle}(1)\}\in A$ and by (5) $\langle b^\beta_{\tau\sm i}(2): i<n\rangle \in T$.

Now suppose that for any $\tau\in n^{< n}$, there is $i<n$ such that $c(b^\alpha_{\langle 0\rangle}(0), b^\beta_{\tau\sm i}(2))=1$. Then, by an induction on the length, choose  $\sigma\in n^n$ such that $c(b^\alpha_{\langle 0\rangle}(0), b^\beta_{\sigma\up_i}(2))=1$ for any $0<i\leq n$. By (11), 
$$c(b^\alpha_{\langle 0\rangle}(3), b^\beta_{\sigma\up_i}(5))=0\text{ for any } 0<i\leq n.$$
 Then the conclusion of the lemma holds since by (3), $\{b^\alpha_{\langle 0\rangle}(3), b^\alpha_{\langle 0\rangle}(4)\}\in A$ and by (6), $\langle b^\beta_{\sigma\up_i}(5): 0<i\leq n\rangle \in T$.
\end{proof}

\begin{lem}\label{modelcomb}
Assume $\varphi_0(c)$. Suppose that
\begin{enumerate}[$(i)$]
\item $\mc{M}_0\in \mc{M}_1\in...\in\mc{M}_{n-1}$ is a $\in$-chain of countable elementary submodels of $H(\omega_2)$ containing $S, c$ where $0<n<\omega$;
\item $b\in [S]^n$ such that 
\begin{itemize}
\item for any $i<n-1$, $b(i)\in \mc{M}_{i+1}\setminus\mc{M}_i$ and $b_{n-1}\notin \mc{M}_{n-1}$,
\item $\Delta(b)\geq \mc{M}_0\cap \omega_1$;
\end{itemize}
\item $A\subset [S]^2$ is in $\mc{M}_0$ such that
\begin{itemize}
\item for any $a\in A$, $a(0)<_S a(1)$;
\item $\{a(0): a\in A\}$ is dense above $\wedge(b)\up_\xi$ for some $\xi\in \mc{M}_0\cap \omega_1$.
\end{itemize}
\end{enumerate}
Then there is $a\in A\cap \mc{M}_0$ such that $a(1)<_S \wedge(b)$ and $c(a(0), b(i))=0$ for any $i<n$.
\end{lem}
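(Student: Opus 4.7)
The plan is to reduce Lemma~\ref{modelcomb} to Lemma~\ref{splitting tree} by working inside $\mc{M}_0$. Set $\delta_i := \mc{M}_i \cap \omega_1$. Standard facts about countable elementary submodels give $S_\alpha \subseteq \mc{M}_i$ whenever $\alpha < \delta_i$ (using that every level of the Suslin tree $S$ is countable), and combined with the hypotheses on $b$ this implies $ht(b(i))\in [\delta_i,\delta_{i+1})$ for $i<n-1$, $ht(b(n-1))\geq \delta_{n-1}$, and $x:=\wedge(b)\up_\xi\in \mc{M}_0$. In particular, for any $a\in A\cap \mc{M}_0$ one has $ht(a(1))<\delta_0\leq ht(\wedge(b))$, so $a(1)<_S \wedge(b)$ will follow automatically as soon as $a(1)$ lies on the branch to $\wedge(b)$.

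The next step is to construct inside $\mc{M}_0$ a tree $T\subseteq S^{\leq n}$ satisfying conditions (ii)--(iv) of Lemma~\ref{splitting tree}. Level $1$ of $T$ is $\{\langle t\rangle : t\geq_S x,\, ht(t)\in H\}$ for a subset $H\in \mc{M}_0$ of $\omega_1$ chosen so that $otp(H\cap \alpha)\leq \delta_0$ for every $\alpha\geq \delta_0$; this is possible because $cf(\delta_0)=\omega$ and $\mc{M}_0$ sees enough sparse cofinal sequences in $\omega_1$. At higher levels we use recursion to enforce the $\Delta$-bound required by condition (iii) for every $\sigma\in T\cap S^n$, while maintaining uncountability above each $\sigma$ and density above $x$. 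Applying Lemma~\ref{splitting tree} inside $\mc{M}_0$ to $T,A,x$ and invoking elementarity then produces $a\in A\cap \mc{M}_0$ and $\sigma\in T\cap S^n\cap \mc{M}_0$ with $a(1)<_S \sigma(0)$ and $c(a(0),\sigma(i))=0$ for every $i<n$.

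The main obstacle is transferring the coloring conclusion from the $\mc{M}_0$-side tuple $\sigma$ to the external $b$. My intended approach is to strengthen the previous step by iterating Lemma~\ref{splitting tree} over the subtrees $T^\tau$ for $\tau\in T\cap S^{<n}$, which yields predensity above $x$ inside $\mc{M}_0$ of
\[
E=\{\sigma\in T\cap S^n : \exists a\in A,\, a(1)<_S \sigma(0),\, c(a(0),\sigma(i))=0 \text{ for all } i<n\}.
\]
The initial-segment branch of $b$ through $S$ below level $\delta_0$ is generic over $\mc{M}_0$ in the sense that it meets every maximal antichain of $S$ lying in $\mc{M}_0$ at a node of $\mc{M}_0$; propagated up the tree via the recursive definition of $T$, this gives a $T$-approximation of $b$ that meets $E$ at some $\sigma^*\in \mc{M}_0$, and the witness $a\in A\cap \mc{M}_0$ attached to $\sigma^*$ by Lemma~\ref{splitting tree} then satisfies $c(a(0),b(i))=0$ for every $i<n$. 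Carrying out this last step -- aligning the coloring constraints captured by $T$ inside $\mc{M}_0$ with the actual coloring along the external $b$-branch -- is the most delicate ingredient, and will require the predensity of $E$ together with the specific $\Delta$-structure of $T$ built into the sparse height set $H$.
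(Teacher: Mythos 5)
Your overall framework (get a tree in $\mc{M}_0$, feed it to Lemma~\ref{splitting tree}) is the right one, but the way you orient the argument leaves the essential step unproved, and I do not believe it can be completed as stated. The problem is the transfer step you yourself flag as delicate. Lemma~\ref{splitting tree} applied to a tree $T\in\mc{M}_0$ produces, by elementarity, a pair $a\in A\cap\mc{M}_0$ and an \emph{internal} tuple $\sigma\in T\cap S^n\cap\mc{M}_0$ with $c(a(0),\sigma(i))=0$. No amount of predensity of your set $E$ inside $\mc{M}_0$, combined with genericity of the branch through $\wedge(b)$ over $\mc{M}_0$, can convert this into $c(a(0),b(i))=0$ for the actual external nodes $b(i)$: the coloring $c$ is arbitrary, so its values at pairs $(a(0),b(i))$ with $ht(b(i))\geq\mc{M}_i\cap\omega_1$ are not determined by any initial segment of $b$ below $\mc{M}_0\cap\omega_1$, nor by the values of $c$ on tuples lying in $\mc{M}_0$. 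A ``$T$-approximation of $b$ meeting $E$'' gives you a witness $a$ good for the approximation $\sigma^*$, not for $b$.

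The paper resolves exactly this point by running the argument in the contrapositive. Assume the conclusion fails for $b$. Form the set $B$ of all tuples $\langle\{\delta,v_0\},v_1,\dots,v_{n-1}\rangle$ (with $\delta\geq\xi$, $\Delta(\{v_i\})\geq\delta$, $v_0\up_\xi=b(0)\up_\xi$) for which \emph{no} $a\in A$ satisfies $a(1)<_S v_0\up_\delta$ and $c(a(0),v_i)=0$ for all $i<n$. This $B$ is definable from parameters in $\mc{M}_0$, hence $B\in\mc{M}_0$, and the failure assumption says precisely that $\langle\{\mc{M}_0\cap\omega_1,b(0)\},b(1),\dots,b(n-1)\rangle\in B$; the hypotheses $b(i)\in\mc{M}_{i+1}\setminus\mc{M}_i$ then let one prune, inside $\mc{M}_0$, to an everywhere uncountably branching subtree $T^*$ still containing this tuple. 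Applying Lemma~\ref{splitting tree} to (a reindexed version of) $T^*$ and $A$ then produces a tuple \emph{in the tree of bad tuples} that is in fact good --- a contradiction. The key is that the tree handed to Lemma~\ref{splitting tree} must consist of tuples already known to be bad, so that the lemma's output is self-contradictory; your version hands it a neutral tree and then has no mechanism to push the conclusion onto $b$. (The extra coordinate $\delta$ carried in the tuples of $B$ is also what secures condition $(iii)$ of Lemma~\ref{splitting tree} together with $a(1)<_S\sigma(0)$ at the right level, which your sparse height set $H$ only partially addresses.)
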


 \begin{proof}
 Suppose otherwise. Let $B$ collect all $\langle\{\delta, v_0\}, v_1,..., v_{n-1}\rangle$ such that 
 \begin{itemize}
 \item $\delta\in [\xi, \omega_1)$ and $v_i\in S$ for any $i<n$;
 \item  $\Delta(\{v_i: i<n\})\geq \delta$ and $v_0\up_\xi=b(0)\up_\xi$;

 \item there is no $a\in A$ such that $a(1)<_S v_0\up_\delta$ and $c(a(0), v_i)=0$ for any $i<n$.
 \end{itemize}
 Then $B\in \mc{M}_0$ and $\langle\{\mc{M}_0\cap \omega_1, b(0)\}, b(1),...,b(n-1)\rangle\in B$.
 
 Let $T$ be the downward closure of $B$.
 
 Find, in $\mc{M}_0$, a downward closed subtree $T^*$ of $T$ such that
 \begin{itemize}
 \item $\emptyset\in T^*$ and $\langle\{\mc{M}_0\cap \omega_1, b(0)\}, b(1),...,b(n-1)\rangle\in T^*$;
 \item every $\sigma\in T^*$ of height $<n$ has uncountably many immediate successors.
 \end{itemize}
 
  Since $\langle\{\mc{M}_0\cap \omega_1, b(0)\}\rangle\in T^*$ and $T^*\in \mc{M}_0$, $\{u\up_\delta: \langle \{\delta,u\}\rangle\in T^*\}$ is dense above $b(0)\up_\eta$ for some $\eta\in [\xi, \mc{M}_0\cap \omega_1)$. Let
 $$x=b(0)\up_\eta.$$
 
%Find a subtree $T'\subset T^*$ such that $\emptyset\in T'$ and
% \begin{itemize}  \item for any $\langle(\eta, u)\rangle\neq \langle(\delta, v)\rangle$ in $T^*$, either $ht(u)<\delta$ or $ht(v)<\eta$;   \item every $\sigma\in T^*$ of height $<n$ has uncountably many immediate successor;   \item $\{u: \langle(\eta, u)\rangle\in T'\text{ for some } \eta\}$ is dense above $x$.  \end{itemize}
 
 Let $\{s_\alpha: \alpha<\omega_1\}$ enumerate $\{s\in S: s\geq_S x\}$. Now choose, by induction on $\alpha$, $a_\alpha\in A$ and $\langle \{\delta_\alpha, u_\alpha\}\rangle\in T^*$ such that
 \begin{enumerate}
 \item $s_\alpha<_S a_\alpha(0)$ and $s_\alpha<_S u_\alpha\up_{\delta_\alpha}$;
 \item $\sup\{ht(u_\beta): \beta<\alpha\}< ht(a_\alpha(0))<ht(a_\alpha(1))< \delta_\alpha$.
 \end{enumerate}
 
 Let $T'=\{\emptyset\}\cup \{\langle u_\alpha, v_1,..., v_{i}\rangle: \langle \{\delta_\alpha, u_\alpha\}, v_1,..., v_i\rangle \in T^*\}$.
 
 Let $A'=\{a_\alpha: \alpha<\omega_1\}$.
 
 It is straightforward to check that $n, A', x, T'$ satisfy $(i)-(v)$ in Lemma \ref{splitting tree}. Then by Lemma \ref{splitting tree}, there are $a\in A'$ and $\sigma\in T'\cap S^n$ such that $a(1)<_S \sigma(0)$ and $c(a(0), \sigma(i))=0$ for any $i<n$.
 
 Assume $a=a_\alpha$ and $\sigma(0)=u_\beta$ for some $\alpha, \beta$. By (2), $\alpha\leq \beta$ and hence $ht(a(1))<\delta_\beta$. So $a(1)<_S \sigma(0)\up_{\delta_\beta}$ and $c(a(0), \sigma(i))=0$ for any $i<n$. But this contradicts the definition of $B$ since $\langle\{\delta_\beta, \sigma(0)\}, \sigma(1),..., \sigma(n-1)\rangle \in B$.
 \end{proof}

 \begin{thm}\label{PFA rejects 0}
 Suppose that $S$ is a complete coherent Suslin tree and $c: [S]^2\ra 2$ is a coloring.
 Then $\mathrm{PFA}(S)$ implies that $\varphi_0(c)$ fails.
 \end{thm}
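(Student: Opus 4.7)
The plan is to argue by contradiction, assuming both PFA$(S)$ and $\varphi_0(c)$. Building on the model-reflection content of Lemma \ref{modelcomb}, we design a proper forcing $\mc{P}$ preserving $S$ that generically adds an uncountable family directly witnessing the failure of $\varphi_0(c,A_0)$ for some fixed $A_0 \in \mc{C}^0_1$. Applying PFA$(S)$ to $\mc{P}$ against the natural $\aleph_1$-many density requirements yields such a family in the ground model, contradicting $\varphi_0(c)$.

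Fix $A_0 \in \mc{C}^0_1$ with $\{a(0) : a \in A_0\}$ dense above $stem(A_0)\sm 0$ and $\{a(1) : a \in A_0\}$ dense above $stem(A_0)\sm 1$. Declare $p \in \mc{P}$ to be a finite subset of $A_0$ such that $c(a(0), b(0)) = c(a(1), b(1)) = 0$ for every $a \neq b$ in $p$, ordered by reverse inclusion. If $G$ is generic and $X = \bigcup G$, enumerate $X$ as $\langle a_\alpha : \alpha < \omega_1 \rangle$ and set $s_\alpha = a_\alpha(0)$, $x_\alpha = \{a_\alpha\}$. Then for every $\alpha < \beta$ with $s_\alpha <_S s_\beta$ we have $c(a_\alpha(0),a_\beta(0)) = c(a_\alpha(1),a_\beta(1)) = 0$, so no $j<2$ satisfies the coloring clause of $\varphi_0(c,A_0)$.

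The core of the argument is showing that $\mc{P}$ is proper and preserves $S$. Given a countable $\mc{M} \prec H(\omega_2)$ containing all relevant objects and a condition $p \in \mc{M} \cap \mc{P}$, select $a^* \in A_0$ with both $a^*(0), a^*(1)$ of height $\geq \mc{M} \cap \omega_1$; the claim is that $q := p \cup \{a^*\}$ is $(\mc{M}, \mc{P})$-generic. Given a dense open $D \in \mc{M}$, extending $q$ into $D$ reduces to finding $a \in A_0 \cap \mc{M}$ with $c(a(0), a^*(0)) = c(a(1), a^*(1)) = 0$ (plus the analogous conditions against any members of the extension outside $\mc{M}$). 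This is supplied by Lemma \ref{modelcomb}, once one reorganizes $A_0$ into the ``comparable-pair'' shape required by the lemma --- e.g., by pairing each $a\in A_0$ with canonical extensions $a(0) <_S a(0)\sm 1$ and $a(1) <_S a(1)\sm 0$ --- and takes the $b$-tuple from $a^*$. The same master-condition recipe applied to $\mc{P}$-names for maximal antichains of $S$ sitting in $\mc{M}$ gives $S$-preservation.

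The principal obstacle is the two-column bookkeeping: Lemma \ref{modelcomb} delivers a zero-coloring against a tuple $b$ living above a single common stem $\wedge(b)$, whereas our conditions impose zero-colorings against both the $0$-column and the $1$-column of $a^*$, which sit on opposite sides of $stem(A_0)$. Resolving this requires either invoking the lemma twice along carefully nested model-chains (treating the $1$-column as side information when handling the $0$-column and vice versa) or engineering a single derived pair-family whose $b$-tuple simultaneously encodes both columns. Once this bookkeeping is in place, the verification follows the standard proper-forcing pattern for PFA$(S)$-rejection arguments.
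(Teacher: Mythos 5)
Your overall strategy (force a direct counterexample to $\varphi_0(c,A_0)$, then reflect it into the ground model with PFA$(S)$) is sound in outline, and your poset would yield the desired contradiction \emph{if} it were proper, $S$-preserving, and had the height-unbounding sets dense. But the properness argument has a genuine gap that your own ``principal obstacle'' paragraph flags without resolving. Lemma \ref{modelcomb} controls only the color of the \emph{first} coordinate $a(0)$ of the pair it returns, against a tuple $b$ all of whose entries sit above the single node $\wedge(b)$ with $\Delta(b)\geq \mc{M}_0\cap\omega_1$; the second coordinate $a(1)$ receives no coloring guarantee at all (it is merely placed below $\wedge(b)$). Consequently ``invoking the lemma twice'' cannot work: the first application already fixes the pair $a$, and no second application can retroactively arrange $c(a(1),a^*(1))=0$ for that same $a$. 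Nor can a ``derived pair-family'' put $a^*(0)$ and $a^*(1)$ into one $b$-tuple, since they split at $ht(stem(A_0))<\mc{M}_0\cap\omega_1$, violating the hypothesis $\Delta(b)\geq\mc{M}_0\cap\omega_1$. What you would need is a two-sided analogue of Lemma \ref{modelcomb} (and of Lemma \ref{splitting tree} behind it), and nothing in the paper supplies one.

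A second, independent problem is that your conditions demand $c(a(0),b(0))=c(a(1),b(1))=0$ for \emph{all} $a\neq b$ in $p$, including pairs whose nodes are incomparable in $S$, whereas every tool available under $\varphi_0(c)$ produces $0$-colored \emph{comparable} pairs (the lemma gives $a(0)<_S a(1)<_S \wedge(b)\leq_S b(i)$). Worse, under the hypotheses PFA$(S)+\varphi_0(c)$ one has $\mathrm{MA}_{\omega_1}(S,c,\varphi_0)$ by Proposition \ref{prop}, and applying it to $\mc{Q}_{\langle \{a_\alpha\}:\alpha<\omega_1\rangle}$ from Definition \ref{defQ} yields an uncountable $\Gamma$ such that every pair from $\{a_\alpha:\alpha\in\Gamma\}$ realizes the $(j,1-j)$ pattern; the corresponding singletons form an uncountable antichain in your poset, so it is not even c.c.c.\ there, and you give no argument for its properness or for the density of the sets $\{p:\exists a\in p\ (ht(a(0))>\gamma)\}$ needed to make the generic uncountable. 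The paper sidesteps all of this with a structurally different poset: conditions are pairs $(\ms{N}_p,F_p)$ carrying finite $\in$-chains of elementary submodels as side conditions, with $c(s,t)=0$ imposed \emph{only on comparable} $s<_S t$ in $F_p$; genericity is verified one column of $F_r$ above a common stem at a time via Lemma \ref{modelcomb}, and the final contradiction is obtained from Lemma \ref{lem1} (applied to the resulting uncountable set that is $0$-homogeneous on comparable pairs) rather than from $\varphi_0(c,A_0)$ directly.
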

 \begin{proof}
 Suppose, towards a contradiction, that both PFA$(S)$ and $\varphi_0(c)$ hold. 
 
 Let $\mc{P}$ be the poset consisting of $p=(\mathscr{N}_p, F_p)$ such that
 \begin{enumerate}[$(i)$]
 \item $\ms{N}_p$ is a finite $\in$-chain of countable elementary subodels of $H(\omega_2)$ containing everything relevant;
 \item $F_p\in [S]^{<\omega}$ is separated by $\ms{N}_p$, i.e., for any $s\neq t$ in $F_p$, there is $\mc{N}\in \ms{N}_p$ such that $|\{s,t\}\cap \mc{N}|=1$;
 \item for any $s<_S t$ in $F_p$, $c(s,t)=0$.
 \end{enumerate}
 The order is coordinate-wise extension.
 
 We first prove that $\mc{P}$ is proper and $\Vdash_\mc{P} S$ is Suslin. Fix a countable elementary submodel $\mc{M}\prec H(\kappa)$ containing everything relevant where $\kappa$ is a large enough regular cardinal. It suffices to prove that $(r,s)$ is $(\mc{M}, \mc{P}\times S)$-generic whenever $\mc{M}\cap H(\omega_2)\in \ms{N}_r$.
 
  Now fix a dense open $D\subset \mc{P}\times S$ in $\mc{M}$. We will find a condition in $D\cap\mc{M}$ compatible with $(r,s)$. Extending $(r,s)$, we may assume that $(r,s)\in D$.
 
 Without loss of generality,   assume $\ms{N}_r\cap \mc{M}=\emptyset$ and $F_r\cap \mc{M}=\emptyset$. Extending $s$ if necessary, we may assume that $ht(s)>\max\{ht(t): t\in F_r\}$. Let 
 $$n=|F_r|.$$
  By coherence of $S$, choose $\delta\in \mc{M}\cap \omega_1$ such that 
 \begin{enumerate}
\item  for any $t\in F_r$, $s\up_{[\delta, \mc{M}\cap \omega_1)}=t\up_{[\delta, \mc{M}\cap \omega_1)}$.
 \end{enumerate}
 Let $$I=\{i<n: F_r(i)\up_{[\delta, ht(F_r(i)))}=s\up_{[\delta, ht(F_r(i)))}\}.$$
In particular,  for any $i<j$ in $I$, $F_r(i)\up_{[\delta, ht(F_r(i)))}=F_r(j)\up_{[\delta, ht(F_r(i)))}$.

  Now, by a standard reverse induction, we find a downward closed tree $T\subset S^{\leq n+1}$  in $\mc{M}$ such that
  \begin{enumerate}\setcounter{enumi}{1}
  \item $\emptyset\in T$, $\langle F_r(i): i<n\rangle^\smallfrown s \in T$;
  \item for any $a\in T\cap S^{<n}$, there are uncountably many $t\in S$ such that $a^\smallfrown t\in T$;
  \item every $a\in T\cap S^n$ has   an extension in $T$;
  \item for any $a\in T\cap S^{n+1}$, there is a condition $(p,t)\in D$ such that 
  \begin{enumerate}[(5.1)]
  \item $F_p=\{a(i): i<n\}$,  $t=a(n)$ and $ht(a(0))<...<ht(a(n))$;
  \item for any $u\in F_p$, $t\up_{[\delta, \mc{M}_p\cap \omega_1)}=u\up_{[\delta, \mc{M}_p\cap \omega_1)}$ where $\mc{M}_p$ is the $\in$-least element of $\ms{N}_p$;
  \item for any $i<n$, $a(i)\up_\delta=F_r(i)\up_\delta$ and $a(n)\up_\delta=s\up_\delta$;
  \item   $I=\{i<n: a(i)\up_{[\delta, ht(a(i)))}=t\up_{[\delta, ht(a(i)))}\}$.
  \end{enumerate}
  \end{enumerate}
  
    Let $I_i$ be the $i$th element of $I$ in the increasing enumeration. 
  By   $(3)$, for each $i<|I|-1$ and $a\in T\cap S^{I_i+1}$, choose $f(a)>_S a(I_i)$ such that
  \begin{enumerate}\setcounter{enumi}{5}
  \item $\{a(I_i)\up_\delta^\smallfrown b(I_{i+1})\up_{[\delta, ht(b(I_{i+1})))}: b\in T\cap S^{I_{i+1}+1}$ extending $a\}$ is dense above $f(a)$.
   \end{enumerate}
   To see that $f(a)>_S a(I_i)$, note by (5.4), for any $b\in T\cap S^{\geq I_{i+1}+1}$ extending $a$, 
   $$a(I_i)\up_{[\delta, ht(a(I_i)))}=b(I_{i+1})\up_{[\delta, ht(a(I_i)))}\text{ and hence }$$
   $$a(I_i)\up_\delta^\smallfrown b(I_{i+1})\up_{[\delta, ht(b(I_{i+1})))}>_S a(I_i).$$   
   For  $a\in T\cap S^{I_{|I|-1}+1}$,
   \begin{enumerate}\setcounter{enumi}{6}
  \item  choose $f(a)>_S a(I_{|I|-1})$ to be any $t$ such that for some $b\in T\cap S^{n+1}$ extending $a$, $t=a(I_{|I|-1})\up_\delta^\smallfrown b(n)\up_{[\delta, ht(b(n)))}$.
     \end{enumerate}
   Assume, by elementarity, that $f\in \mc{M}$.
   
Then we will inductively choose a $<_T$-increasing sequence $\langle a_i\in \mc{M}\cap T\cap S^{I_i+1}: i<|I|\rangle$ such that for any $i<|I|$,
  \begin{enumerate}\setcounter{enumi}{7}
  \item $f(a_i)<_S F_r(I_i)$;
  \item for any $v\in F_r$, if $v\up_{\mc{M}\cap \omega_1}=F_r(I_i)\up_{\mc{M}\cap \omega_1}$, then $c(a_i(I_i), v)=0$.
    \end{enumerate}
  
 We first find $a_0$. Note that $\langle F_r(j): j\leq I_0\rangle \in T$. So for some $\xi\in \mc{M}\cap \omega_1$, $\{a(I_0): a\in T\cap S^{I_0+1}\}$ is dense above $F_r(I_0)\up_\xi$. Applying Lemma \ref{modelcomb} to an appropriate  $\in$-subchain of $\ms{N}_r$, $\{v\in F_r: v\up_{\mc{M}\cap \omega_1}=F_r(I_0)\up_{\mc{M}\cap \omega_1}\}$ and $\{(a(I_0), f(a)): a\in T\cap S^{I_0+1}\}$, we get $a_0\in \mc{M}\cap T \cap S^{I_0+1}$ such that (8) and (9) hold for $i=0$.
 
 Now suppose that $a_k\in \mc{M}\cap T\cap S^{I_k+1}$ has been chosen to satisfy (8) and (9) and $k<|I|-1$. We check for $i=k+1$.  Let $x_k=F_r(I_{k+1})\up_\delta^\smallfrown f(a_k)\up_{[\delta, ht(f(a_k)))}$. By induction hypothesis (8) at $k$ and (1), 
 $$x_k<_S F_r(I_{k+1}).$$ 
 By  (6) at $k$, (5.3) and complete coherence of $S$, 
 $$\{b(I_{k+1}): b\in T\cap S^{I_{k+1}+1}\text{ extending }a_k\}\text{ is dense above }x_k.$$
 
 Applying Lemma \ref{modelcomb} to an appropriate  $\in$-subchain of $\ms{N}_r$, $\{v\in F_r: v\up_{\mc{M}\cap \omega_1}=F_r(I_{k+1})\up_{\mc{M}\cap \omega_1}\}$ and $\{(b(I_{k+1}), f(b)): b\in T\cap S^{I_{k+1}+1} $ extending $a_k\}$, we get $a_{k+1}\in \mc{M}\cap T \cap S^{I_{k+1}+1}$ extending $a_k$ such that (8) and (9) hold for $i=k+1$. This finishes the induction.\medskip
 
 Now suppose that $\langle a_i\in \mc{M}\cap T\cap S^{I_i+1}: i<|I|\rangle$ has been found to satisfy (8) and (9).  Then, by (7), find $b\in \mc{M}\cap T\cap S^{n+1}$ extending $a_{|I|-1}$ such that 
 $$b(n)\up_{[\delta, ht(b(n)))}=f(a_{|I|-1})\up_{[\delta, ht(f(a_{|I|-1})))}.$$
  Find $(p,t)\in D\cap \mc{M}$ witnessing (5) for $b$. It suffices to prove that $(p, t)$ is compatible with $(r, s)$.
 
 First note that by (5.1) and (8),  
 $$t\up_{[\delta, ht(t))}=b(n)\up_{[\delta, ht(b(n)))}=f(a_{|I|-1})\up_{[\delta, ht(f(a_{|I|-1})))}=F_r(|I|-1)\up_{[\delta, ht(t))}.$$
 Then by (5.3) and (1), $t<_S s$.
 
 Now we only need to check that $(\ms{N}_p\cup \ms{N}_r, F_p\cup F_r)$ is a condition of $\mc{P}$. The only non-trivial case is $(iii)$ for $u<_S v$ such that $u\in F_p$ and $v\in F_r$. Let $j<n$ be such that 
 $$u=F_p(j).$$
 
 Then by (1) and $t<_S s$,  
 $$u\up_{[\delta, ht(u))}=v\up_{[\delta, ht(u))}=s\up_{[\delta, ht(u))}=t\up_{[\delta, ht(u))}.$$
 By (5.4), $j\in I$. By (5.3), $u\up_\delta=F_r(j)\up_\delta$. Together with 
 (1) and the fact that $u<_S v$,  
 $$v\up_{\mc{M}\cap \omega_1}=F_r(j)\up_{\mc{M}\cap \omega_1}.$$
 Then by  (9) and the fact that $j\in I$, $c(u,v)=0$. 
 
 This shows that  $(\ms{N}_p\cup \ms{N}_r, F_p\cup F_r)$ is a condition  and finishes the proof of properness and $S$-preservation.\medskip
 
Above argument shows that for any $s\in S\setminus \mc{M}$, $(\{\mc{M}\cap H(\omega_2)\}, \{s\})$ is $(\mc{M}, \mc{P})$-generic and will force $\dot{F}$ to be uncountable where
$$\dot{F}=\bigcup\{F_p: p\in \dot{G}\} \text{ and $\dot{G}$ is the canonical name of the generic filter}.$$

Applying PFA$(S)$, we get an uncountable $F\subset S$ such that for any $u<_S v$ in $F$, $c(u,v)=0$. But this contradicts, e.g., Lemma \ref{lem1}.
 \end{proof}

\section{\texorpdfstring{MA$_{\omega_1}(S)[S]$}{MAomega1(S)[S]}  together with \texorpdfstring{$\mathrm{Pr}_0(\omega_1, \omega_1, \omega)$}{Pr0(omega1, omega1, omega)}}
In this section, we will introduce a new property and iterated force c.c.c. posets that preserve the property with finite support. Then force with $S$ to get a model of MA$_{\omega_1}(S)[S]$  together with $\mathrm{Pr}_1(\omega_1, \omega, \omega)$. Then a standard lifting argument shows that $\mathrm{Pr}_0(\omega_1, \omega_1, \omega)$ holds  (see, e.g., the beginning of Section 4 of \cite{st87} and  Lemma 1 of \cite{ss91}).

For completeness, we sketch the lifting argument here. Suppose that $d: [\omega_1]^2\ra \omega$ witnesses $\mathrm{Pr}_1(\omega_1, \omega, \omega)$. Fix a surjection $\pi_\alpha: \omega\ra \alpha$  for each $\alpha<\omega_1$ and a sequence of pairwise distinct reals $\langle r_\alpha\in 2^\omega: \alpha<\omega_1\rangle$. Let $\{h_n: n<\omega\}$ list all functions $h: 2^k\times 2^k\ra \omega$ where $k<\omega$. Define $d': [\omega_1]^2\ra \omega_1$   by 
$$d'(\alpha,\beta)=\pi_\beta(h_{d(\alpha,\beta)}(r_\alpha\up_k, r_\beta\up_k)$$
 where $k$ is the number such that  $dom(h_{d(\alpha,\beta)})=2^k\times 2^k$. Then $d'$ witnesses Pr$_0(\omega_1,\omega_1,\omega)$. To see this, fix $n<\omega$, uncountable pairwise disjoint $A\subset [\omega_1]^n$ and $H: n\times n\ra \omega_1$. Going to subsets, assume that for some $k$, $\langle r_{a(i)}\up_k: i<n\rangle$ is constant for $a\in A$ and consists of $n$ different elements. By $\mathrm{Pr}_1(\omega_1, \omega, \omega)$, choose $b\in A$ above $rang(H)$ such that  for any $m<\omega$,
 $$ \text{ there is }a\in A\cap [b(0)]^n\text{ such that }d[a\times b]=\{m\}. \eqno{(*)_m}$$
 Now choose $m$ such that $h_m(r_{b(i)}\up_k, r_{b(j)}\up_k)\in \pi_{b(j)}^{-1}\{ H(i,j)\}$ for any $i,j<n$. Then choose $a$ witnessing $(*)_m$. It is straightforward to check that $d'(a(i), b(j))=H(i,j)$ for any $i,j<n$.\medskip

We will start with a model of GCH in which there is a complete coherent Suslin tree $S$ and a coloring $c: [S]^2\ra \omega$ satisfying Pr$_0(S, \omega, \omega)$. 

Throughout this section, we use $\sigma, \tau$ to denote functions from $m$ to $2$ for some $m<\omega$ and we identify $2^m$ with the collection of functions from $m$ to $2$ for any $m<\omega$.  For $m<\omega$, we use ${\overset{\rightharpoonup}{0}}^m$  ($\overset{\rightharpoonup}{1}^m$) to denote the sequence of constant $0$ ($1$) with length $m$. We will omit $m$ if it is clear from the context.

Recall  the height order $<_{ho}$ in Definition \ref{ho}: for $\sigma\neq \tau$ in $2^m$ where $m<\omega$, $\sigma<_{ho} \tau$ if $\sigma(\Delta(\sigma, \tau))<\tau(\Delta(\sigma, \tau))$.

\begin{defn}\label{defc11}
For $0<n<\omega$ and $1<m<\omega$, $\mc{C}^1_{n,m}$ is the collection of all uncountable $A\subset ([S]^{n})^{2^m}$ such that for some $s\in S$, denoted by $stem(A)$, for any $\langle a_\sigma: \sigma\in 2^m\rangle\in A$,
\begin{enumerate}
\item  for any $\sigma\in 2^m$,  $s^\smallfrown \sigma<_S a_\sigma(0)<_S...<_S a_\sigma(n-1);$
\item for any $\sigma<_{ho} \tau$ in $2^m$,  $ht(a_\sigma(n-1))<ht(a_\tau(0))$;
\item for any $\sigma\neq \tau$ in $2^m$, $D_{a_\sigma(n-1), a_\tau(n-1)}=\{ht(s)+i: i\in D_{\sigma, \tau}\}$;
\item for any $\langle b_\sigma: \sigma\in 2^m\rangle\neq \langle a_\sigma: \sigma\in 2^m\rangle$ in $A$, 
$$\text{either } ht(a_{\overset{\rightharpoonup}{1}}(n-1))< ht(b_{\overset{\rightharpoonup}{0}}(0)) \text{ or } ht(b_{\overset{\rightharpoonup}{1}}(n-1))<ht(a_{\overset{\rightharpoonup}{0}}(0)).$$
\end{enumerate}
\end{defn}

\begin{defn}
\begin{enumerate}
\item $\mc{C}^1=\bigcup \{\mc{C}^1_{n,m}: 0<n<\omega,  1<m<\omega\}$.

\item For $A\in \mc{C}^1$, $(N_A, M_A)$ is the $(n,m)$ such that $A\in \mc{C}^1_{n,m}$. 
\end{enumerate}
\end{defn}
In this section, we will frequently take the greatest integer less than or equal to a logarithm with base 2.
\begin{defn}
For any real number $x$,   $\lfloor x \rfloor$ is the greatest integer less than or equal to $x$.
\end{defn}

Now, we are ready to define the   property $\varphi_1$.
\begin{defn}
\begin{enumerate}
\item For $0<n<\omega$ and $\langle A_i\in \mc{C}^1: i<n\rangle$, $\varphi_1(c, A_0,..., A_{n-1})$ is the assertion that for any collection $\langle s_\alpha: \alpha<\omega_1\rangle$ of pairwise distinct elements of $S$,  for any $\langle x^i_\alpha\in [A_i]^{<\omega}: i<n, \alpha<\omega_1\rangle$ such that $\langle x^i_\alpha: \alpha<\omega_1\rangle$ is pairwise disjoint for each $i<n$, there are $\alpha<\beta$ such that
\begin{enumerate}[$(i)$]
\item $s_\alpha<_S s_\beta$;
\item for any $i<n$, for any $\langle a_\sigma: \sigma\in 2^{M_{A_i}}\rangle\in x^i_\alpha$ and $\langle b_\sigma: \sigma\in 2^{M_{A_i}}\rangle\in x^i_\beta$, there is $\sigma\in 2^{M_{A_i}}$ such that for any $k, k'<N_{A_i}$,
$$c(a_\sigma(k), b_\sigma(k'))=\lfloor \log_2 M_{A_i}\rfloor -1.$$
\end{enumerate}

\item $\varphi_1(c)$ is the assertion that for any $0<n<\omega$, for any $\langle A_i\in \mc{C}^1: i<n\rangle$, $\varphi_1(c, A_0,..., A_{n-1})$ holds.
\end{enumerate}
\end{defn}

Clearly, $\varphi_1(c)$ implies that $S$ is Suslin. 

The procedure is similar as the procedure in Section 4. First, $\varphi_1(c)$ implies that $c$ induces an $S$ name of a witness for Pr$_1(\omega_1, \omega, \omega)$.
\begin{lem}\label{varphi1topr1}
Assume $\varphi_1(c)$. For any $n<\omega$, for any collection $\langle s_\alpha: \alpha<\omega_1\rangle$ of pairwise distinct elements of $S$, for any collection $\langle a_\alpha\in [ht(s_\alpha)]^n: \alpha< \omega_1\rangle$ of pairwise disjoint elements   and  $k<\omega$, there are $\alpha< \beta$ such that $s_\alpha<_S s_\beta $ and $c( s_\alpha\up_{a_\alpha(i)}, s_\beta\up_{a_\beta(j)})=k$ for any $i,j <n$.
\end{lem}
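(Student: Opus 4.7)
The plan parallels the proof of Lemma~\ref{lem1}, but with a fanout of $2^m$ extensions at each stage instead of two. Given the target color $k$, I choose $m=2^{k+1}$, so that the color $\lfloor\log_2 m\rfloor-1$ produced by $\varphi_1$ equals $k$. Use Fact~\ref{f2} to fix $s\in S$ such that $\{s_\alpha:\alpha<\omega_1\}$ is dense above $s$. For each $\alpha<\omega_1$ I will produce indices $f_\sigma(\alpha)>\alpha$ for every $\sigma\in 2^m$, together with the tuple $\vec a_\alpha=\langle a^\alpha_\sigma:\sigma\in 2^m\rangle$ where $a^\alpha_\sigma(i):=s_{f_\sigma(\alpha)}\up_{a_{f_\sigma(\alpha)}(i)}$, so that after a standard thinning the collection $A=\{\vec a_\alpha\}$ lies in $\mc{C}^1_{n,m}$ with stem $s$. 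Enumerate $2^m$ in $\lo$-order as $\sigma_0\lo\cdots\lo\sigma_{2^m-1}=\overset{\rightharpoonup}{1}$. Pick $f_{\sigma_0}(\alpha)$ above $s^\smallfrown\sigma_0$ by density, with $a_{f_{\sigma_0}(\alpha)}(0)>ht(s)+m$. Inductively, setting $h_i=a_{f_{\sigma_i}(\alpha)}(n-1)+1$, use complete coherence to form $v_{\sigma_{i+1}}=s^\smallfrown\sigma_{i+1}{}^\smallfrown s_{f_{\sigma_i}(\alpha)}\up_{[ht(s)+m,\,h_i)}\in S$, and choose $f_{\sigma_{i+1}}(\alpha)>\alpha$ with $v_{\sigma_{i+1}}<_S s_{f_{\sigma_{i+1}}(\alpha)}$, $a_{f_{\sigma_{i+1}}(\alpha)}(0)>h_i$, and $ht(s_{f_{\sigma_{i+1}}(\alpha)})>ht(s_{f_{\sigma_i}(\alpha)})$.

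Conditions (1)--(3) of Definition~\ref{defc11} then follow: (1) and (2) are immediate from the height inequalities, while for (3) an easy induction shows that for $i<j$ the nodes $s_{f_{\sigma_i}(\alpha)}$ and $s_{f_{\sigma_j}(\alpha)}$ agree on $[ht(s)+m,\,h_i)$ and disagree below $ht(s)+m$ exactly at $\{ht(s)+k:k\in D_{\sigma_i,\sigma_j}\}$. A standard thinning to an uncountable $\Gamma$ secures condition~(4), makes every map $f_\sigma$ monotone on $\Gamma$, and forces $ht(s_{f_{\overset{\rightharpoonup}{1}}(\alpha)})<a_{f_{\overset{\rightharpoonup}{0}}(\beta)}(0)$ for $\alpha<\beta$ in $\Gamma$. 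Apply $\varphi_1(c,A)$ to the pairwise distinct sequence $\langle s_{f_{\overset{\rightharpoonup}{1}}(\alpha)}:\alpha\in\Gamma\rangle$ and the pairwise disjoint singletons $\langle\{\vec a_\alpha\}:\alpha\in\Gamma\rangle$ to obtain $\alpha<\beta$ in $\Gamma$ and $\sigma\in 2^m$ such that $s_{f_{\overset{\rightharpoonup}{1}}(\alpha)}<_S s_{f_{\overset{\rightharpoonup}{1}}(\beta)}$ and $c(a^\alpha_\sigma(i),a^\beta_\sigma(j))=k$ for all $i,j<n$.

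It remains to transfer the $<_S$-relation from the $\overset{\rightharpoonup}{1}$-sequence to $(s_{f_\sigma(\alpha)},s_{f_\sigma(\beta)})$. By construction, for each $\gamma\in\{\alpha,\beta\}$ the nodes $s_{f_\sigma(\gamma)}$ and $s_{f_{\overset{\rightharpoonup}{1}}(\gamma)}$ differ on their common domain only at the finite set $X_\sigma=\{ht(s)+i:\sigma(i)=0\}$. Combined with $s_{f_{\overset{\rightharpoonup}{1}}(\alpha)}<_S s_{f_{\overset{\rightharpoonup}{1}}(\beta)}$ and the height bound $ht(s_{f_\sigma(\alpha)})\le ht(s_{f_{\overset{\rightharpoonup}{1}}(\alpha)})<a_{f_\sigma(\beta)}(0)<ht(s_{f_\sigma(\beta)})$, a position-by-position check (outside $X_\sigma$ both sides equal the corresponding $\overset{\rightharpoonup}{1}$-value, inside $X_\sigma$ both sides flip the same common value) yields $s_{f_\sigma(\alpha)}<_S s_{f_\sigma(\beta)}$. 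Since $f_\sigma$ is monotone on $\Gamma$, $f_\sigma(\alpha)<f_\sigma(\beta)$, so the pair $(f_\sigma(\alpha),f_\sigma(\beta))$ witnesses the lemma. The delicate point, and the main obstacle, is the simultaneous discharge in one inductive construction of two demands that pull in opposite directions: membership in $\mc{C}^1_{n,m}$ forces all the $s_{f_\sigma(\alpha)}$ to share a large common tail past $ht(s)+m$ so that the $D$-sets collapse to $\{ht(s)+k:k\in D_{\sigma,\tau}\}$, while the final transfer of $<_S$ needs the height controls above. The ``prepend $s^\smallfrown\sigma_{i+1}$ to the tail of $s_{f_{\sigma_i}(\alpha)}$'' trick, an analog of the single-flip construction in Lemma~\ref{lem1}, handles both at once, provided one tracks carefully the finitely many exceptional positions $X_\sigma$.
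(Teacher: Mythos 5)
Your route is the same as the paper's: choose $m$ with $\lfloor\log_2 m\rfloor-1=k$, build for each $\alpha$ a $2^m$-indexed fan of nodes $s_{f_\sigma(\alpha)}$ above a density point $s$ by successively prepending $s^\smallfrown\sigma$ to the tail of the previously chosen node, package the restrictions into a member of $\mc{C}^1_{n,m}$, apply $\varphi_1$ to the $\overset{\rightharpoonup}{1}$-coordinates, and transfer $<_S$ to the coordinate $\sigma$ that $\varphi_1$ returns. The paper compresses all of this into ``repeating the argument in Lemma \ref{lem1}'' together with its conditions (1)--(3), of which condition (2) records the full difference sets $D_{s_{f_\sigma(\alpha)},s_{f_\tau(\alpha)}}=\{ht(s)+i: i\in D_{\sigma,\tau}\}$.

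There is, however, a concrete flaw in your construction that breaks the final transfer step. You set $v_{\sigma_{i+1}}=s^\smallfrown\sigma_{i+1}{}^\smallfrown s_{f_{\sigma_i}(\alpha)}\up_{[ht(s)+m,\,h_i)}$ with $h_i=a_{f_{\sigma_i}(\alpha)}(n-1)+1$, so $s_{f_{\sigma_{i+1}}(\alpha)}$ is only forced to agree with $s_{f_{\sigma_i}(\alpha)}$ below height $h_i$; on the possibly nonempty interval $[h_i,\,ht(s_{f_{\sigma_i}(\alpha)}))$ you have no control whatsoever. Hence your key claim that $s_{f_\sigma(\gamma)}$ and $s_{f_{\overset{\rightharpoonup}{1}}(\gamma)}$ differ on their common domain exactly at $X_\sigma$ is not justified: there may be extra disagreements in that interval. (This does not hurt membership in $\mc{C}^1_{n,m}$, whose condition (3) only sees the nodes truncated below $h_i$, and for $\gamma=\beta$ the uncontrolled region lies above $ht(s_{f_{\overset{\rightharpoonup}{1}}(\alpha)})$ after your thinning, so it is harmless there.) But for $\gamma=\alpha$ any extra flip sits inside the common domain of $s_{f_\sigma(\alpha)}$ and $s_{f_\sigma(\beta)}$, in which case your position-by-position check gives $s_{f_\sigma(\alpha)}\not<_S s_{f_\sigma(\beta)}$; since $\varphi_1$ hands you only the one $\sigma$, you cannot switch to another coordinate. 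The repair is one line: copy the entire tail, i.e.\ set $v_{\sigma_{i+1}}=s^\smallfrown\sigma_{i+1}{}^\smallfrown s_{f_{\sigma_i}(\alpha)}\up_{[ht(s)+m,\,ht(s_{f_{\sigma_i}(\alpha)}))}$, exactly as in the proof of Lemma \ref{lem1}. Then an induction gives the paper's condition (2) for the full nodes, $X_\sigma$ really is the whole difference set, and your concluding computation goes through.
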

\begin{proof}
Fix $0<n<\omega$, pairwise distinct $\langle s_\alpha: \alpha<\omega_1\rangle$,  pairwise disjoint $\langle a_\alpha\in [ht(s_\alpha)]^n: \alpha< \omega_1\rangle$ and $k<\omega$. Fix $M$ such that 
$$\lfloor \log_2 M\rfloor -1=k.$$

By Fact \ref{f2}, fix $s\in S$ such that $\{s_\alpha: \alpha<\omega_1\}$ is dense above $s$. Repeating the argument in Lemma \ref{lem1}, we choose, for each $\alpha<\omega_1$,   $\langle f_\sigma(\alpha): \sigma\in 2^{M}\rangle$ such that
\begin{enumerate}
\item $A=\{\langle \{s_{f_\sigma(\alpha)}\up_{a_{f_\sigma(\alpha)}(i)}: i<n\}: \sigma\in 2^{M}\rangle : \alpha<\omega_1\}\in \mc{C}^1_{n, M}$;

\item $D_{s_{f_\sigma(\alpha)}, s_{f_\tau(\alpha)}}=\{ht(s)+i: i\in D_{\sigma, \tau}\}$ for any $\alpha<\omega_1$ and $\sigma, \tau\in 2^{M}$;

\item for any $\alpha<\omega_1$ and $\sigma\in 2^{M}$, $ht(s_{f_\sigma(\alpha)})\leq ht(s_{f_{\overset{\rightharpoonup}{1}}(\alpha)})$.
\end{enumerate}

Applying $\varphi_1(c)$ to $\langle s_{f_{\overset{\rightharpoonup}{1}}(\alpha)}: \alpha<\omega_1\rangle$ and $\langle \{\langle \{s_{f_\sigma(\alpha)}\up_{a_{f_\sigma(\alpha)}(i)}: i<n\}: \sigma\in 2^{M}\rangle\}: \alpha<\omega_1\rangle$, we get $\alpha<\beta$ and $\sigma\in 2^{M}$ such that
\begin{enumerate}\setcounter{enumi}{3}
\item $s_{f_{\overset{\rightharpoonup}{1}}(\alpha)}<_S s_{f_{\overset{\rightharpoonup}{1}}(\beta)}$;
\item for any $i,i'<n$,  $c(s_{f_\sigma(\alpha)}\up_{a_{f_\sigma(\alpha)}(i)}, s_{f_\sigma(\beta)}\up_{a_{f_\sigma(\beta)}(i')})=\lfloor \log_2 M\rfloor -1=k$.
\end{enumerate}

By (2)-(4),
\begin{enumerate}\setcounter{enumi}{5}
\item $s_{f_\sigma(\alpha)}<_S s_{f_\sigma(\beta)}$.
\end{enumerate}
It is clear that $f_\sigma(\alpha)<f_\sigma(\beta)$ are as desired.
\end{proof}

Now we prove that $\varphi_1(c)$  follows from Pr$_0(S, \omega, \omega)$.
\begin{lem}
If $c$ witnesses $\mathrm{Pr}_0(S, \omega, \omega)$, then $\varphi_1(c)$ holds.
\end{lem}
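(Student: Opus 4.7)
The plan is to mirror the proof of Lemma \ref{initial}, replacing the parity-based two-valued target by a single-valued target $C_i=\lfloor\log_2 M_{A_i}\rfloor-1$ per slot. The extra structural freedom offered by $\varphi_1$ (only \emph{some} $\sigma\in 2^{M_{A_i}}$ needs to realize the color) will let me commit in advance to one fixed representative per $A_i$ and read off the conclusion directly from the $\mathrm{Pr}_0$ output.

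Writing $n_i=N_{A_i}$, $m_i=M_{A_i}$, $T_i=stem(A_i)$, I would take
$$F_\alpha=\bigcup_{i<n}\bigcup_{\bar a\in x^i_\alpha}\{a_\sigma(k):\sigma\in 2^{m_i},\ k<n_i\},\qquad D=\bigcup_{i<n}\{ht(T_i)+j:j<m_i\},$$
and pass to an uncountable set of $\alpha$ on which the $F_\alpha$ are pairwise disjoint, $|F_\alpha|=K$ is constant, $\max D<\min\{ht(s):s\in F_\alpha\}$, and each $F_\alpha(k)\up_D$ is independent of $\alpha$. As in Lemma \ref{initial}, this makes the combinatorial type of the $k$-th enumerated position (which $A_i$, which tuple of $x^i_\alpha$, which $\sigma\in 2^{m_i}$, which $k'<n_i$) uniform in $\alpha$.

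The crux is the design of the target $h:K\times K\to\omega$. Fix for each $i<n$ a representative $\sigma_i^*\in 2^{m_i}$ and declare $h(k_1,k_2)=C_i$ whenever both positions carry a type of the form $(i,\cdot,\sigma_i^*,\cdot)$ for one and the same $i$, and $h=0$ elsewhere. Applying $\mathrm{Pr}_0(S,\omega,\omega)$ to $\langle s_\alpha\rangle$, $\langle F_\alpha\rangle$ and $h$ produces $\alpha<\beta$ with $s_\alpha<_S s_\beta$ and $c(F_\alpha(k_1),F_\beta(k_2))=h(k_1,k_2)$ for all $k_1,k_2<K$. Then for any $i<n$ and any $\bar a\in x^i_\alpha$, $\bar b\in x^i_\beta$, the single witness $\sigma=\sigma_i^*$ yields $c(a_\sigma(k),b_\sigma(k'))=C_i$ for every $k,k'<n_i$, which is exactly clause (ii) of $\varphi_1(c,A_0,\ldots,A_{n-1})$.

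The one subtle point, which I expect to be the main obstacle, is that $h$ must be single-valued on each pair of indices, while a node in $F_\alpha$ could a priori lie in the $\sigma_i^*$-slot of two different $A_i$'s at once, forcing $h$ to demand both $C_i$ and $C_{i'}$ at the same pair. I plan to resolve this by choosing the $\sigma_i^*$'s recursively: process the $i<n$ in increasing order of $ht(T_i)$ and at each stage pick $\sigma_i^*\in 2^{m_i}$ to disagree with $T_{i'}\up_{[ht(T_i),ht(T_i)+m_i)}$ whenever $T_i<_S T_{i'}$ is a previously listed stem; exploiting that each $m_i\ge 2$ gives at least four candidates against only finitely many one-point constraints, this makes the branches $T_i{}^\smallfrown\sigma_i^*$ pairwise incomparable in $S$, so any node $u\in F_\alpha$ sits above at most one $T_i{}^\smallfrown\sigma_i^*$ and the cross-$i$ coincidence cannot arise. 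With that bookkeeping in hand, $h$ is well-defined and the $\mathrm{Pr}_0$ application closes the argument.
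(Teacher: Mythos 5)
You have correctly isolated the crux, but your resolution of it does not work, and this is a genuine gap. The obstruction is exactly the well-definedness of $h$: a node of $F_\alpha$ may be a member of a tuple of $x^i_\alpha$ and simultaneously of a tuple of $x^{i'}_\alpha$ with $\lfloor\log_2 M_{A_i}\rfloor\neq\lfloor\log_2 M_{A_{i'}}\rfloor$ (nothing in Definition \ref{defc11} or in the hypotheses of $\varphi_1$ forbids the $A_i$'s from sharing nodes), and then a position can carry the types $(i,\cdot,\sigma_i^*,\cdot)$ and $(i',\cdot,\sigma_{i'}^*,\cdot)$ with $C_i\neq C_{i'}$. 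Your proposed bookkeeping fails on two counts. First, as written it is vacuous: if you process the stems in increasing order of height, then no previously listed stem $T_{i'}$ can satisfy $T_i<_S T_{i'}$. Second, and more seriously, the count ``at least four candidates against finitely many one-point constraints'' does not close. With $m_i=2$ there are exactly four candidates, and the number of constraints is bounded only by $n-1$; worse, some constraints cannot be discharged by any choice of the representatives at all. For instance, take $T_0=stem(A_0)$ with $M_{A_0}=2$ (so $C_0=0$) and four further classes $A_1,\dots,A_4$ with $M_{A_j}=4$ (so $C_j=1$) whose stems are the four nodes of height $ht(T_0)+2$ above $T_0$. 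Then for every $\sigma_0^*\in 2^2$ the node $T_0{}^\smallfrown\sigma_0^*$ lies below some $T_j$, hence below $T_j{}^\smallfrown\sigma_j^*$ for \emph{every} choice of $\sigma_j^*$; if the families are built so that each $\vec a\in x^0_\alpha$ shares its $T_j$-slot node with a tuple of $x^j_\alpha$ for each $\sigma'\in 2^4$, the overdetermination of $h$ is unavoidable. Committing in advance to a single representative $\sigma_i^*$ per $A_i$ is therefore too rigid.

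The paper's proof avoids this entirely by making $h$ depend only on the \emph{first} coordinate and only on the trace on $D$: it sets $h(k)=l$ iff $|\{\xi\in D:F_0(k)(\xi)=0\}|=2^l(2j+1)-1$ for some $j$ (i.e.\ $l$ is the $2$-adic valuation of the zero-count plus one), which is automatically single-valued even for shared nodes. The witness $\sigma$ is then found \emph{a posteriori}, separately for each pair of tuples: since the zero-counts $\{|\{\xi\in D:a_\sigma(0)(\xi)=0\}|:\sigma\in 2^{M_{A_i}}\}$ form a block of $M_{A_i}+1$ consecutive integers and the integers of the form $2^{C_i}(2j+1)-1$ are spaced $2^{C_i+1}=2^{\lfloor\log_2 M_{A_i}\rfloor}\leq M_{A_i}$ apart, some $\sigma$ hits the target color $C_i$. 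If you want to salvage your scheme you must either reproduce this kind of coordinate-free, collision-proof encoding, or prove that the representatives can always be chosen conflict-free, which the example above shows is false. (A minor additional point: uniformity of the type pattern of position $k$ in $\alpha$ needs its own refinement step — there are finitely many patterns once $|F_\alpha|$ is fixed — and does not follow from $F_\alpha(k)\up_D=F_0(k)\up_D$ alone; but that part is routine.)
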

\begin{proof}
Arbitrarily choose $0<n<\omega$, $\langle A_i\in \mc{C}^1: i<n\rangle$,  a collection $\langle s_\alpha: \alpha<\omega_1\rangle$ of pairwise distinct elements of $S$,  a collection $\langle x^i_\alpha\in [A_i]^{<\omega}: i<n, \alpha<\omega_1\rangle$ such that $\langle x^i_\alpha: \alpha<\omega_1\rangle$ is pairwise disjoint for each $i<n$. It suffices to find $\alpha<\beta$ witnessing the assertion of $\varphi_1(c, A_0,..., A_{n-1})$.

%Going to uncountable subsets, we may assume that for some $\langle m_i: i<n\rangle$, $|x^i_\alpha|=m_i$ for any $i<n$ and $\alpha<\omega_1$.  

Now for each $\alpha<\omega_1$,  define
$$F_\alpha=\{a_\sigma(k):  \exists i<n \  \exists\langle a_\tau: \tau\in 2^{M_{A_i}}\rangle\in x^i_\alpha\ (\sigma\in 2^{M_{A_i}}  \wedge k<N_{A_i})\} .$$
%$a_\alpha\in S^{|F_\alpha|}$ by $a_\alpha(k)=$ the $k$th element of $F_\alpha$ in $<_{ho}$-increasing enumeration.

%Since extending $s_\alpha$ does not change the conclusion, we may assume that  for any $\alpha<\omega_1$,
%$$ht(s_\alpha)>\max\{ht(s): s\in F_\alpha\}.$$

Denote $D=\bigcup_{i<n} [ht(stem(A_i)), ht(stem(A_i))+M_{A_i})$.

Going to uncountable subsets, we may assume that 
\begin{enumerate}
\item $\{F_\alpha: \alpha<\omega_1\}$ is pairwise disjoint;
\item for any $\alpha<\omega_1$, $|F_\alpha|=|F_0|$ and $\max D< \min\{ht(s): s\in F_\alpha\}$;
%\item for any $i<n$ and $j<m_i$, the position of $x^i_\alpha(j)$ in $F_\alpha$ is independent of $\alpha$, i.e., for any $k<2N_{A_i}$ and $l<|F_\alpha|$,  $x^i_\alpha(j)(k)=F_\alpha(l)$ iff $x^i_0(j)(k)=F_0(l)$;
\item for any $\alpha<\omega_1$ and $k<|F_0|$, $F_\alpha(k)\up_D=F_0(k)\up_D$.
\end{enumerate}

Now define $h: |F_0|\ra \omega$ by  for any $k<|F_0|$,
 $$h(k)=l \text{ iff } \exists j \  |\{\xi\in D: F_0(k)(\xi)=0\}| =2^l(2j+1)-1.$$ 
 Apply Pr$_0(S, \omega, \omega)$ to get $\alpha<\beta$ such that
\begin{enumerate}\setcounter{enumi}{3}
\item $s_\alpha<_S s_\beta$;
\item for any $k,k'< |F_0|$, $c(F_\alpha(k), F_\beta(k'))=h (k)$.
\end{enumerate}
Together with (2), (3) and the definition of $h$, we conclude that for any $k,k'< |F_\alpha|$, 
\begin{enumerate}\setcounter{enumi}{5}
\item $c(F_\alpha(k), F_\beta(k'))=l$ iff $\exists j  \ |\{\xi\in D: F_\alpha(k)(\xi)=0\}| =2^l(2j+1)-1$.
\end{enumerate}\medskip

We now check that $\alpha<\beta$ are as desired. Fix $i<n$, $\langle a_\sigma: \sigma\in 2^{M_{A_i}}\rangle\in x^i_\alpha$ and $\langle b_\sigma: \sigma\in 2^{M_{A_i}}\rangle\in x^i_\beta$.  Note that   
$$\{|\{\xi\in D: a_\sigma(0)(\xi)=0\}|: \sigma\in 2^{M_{A_i}}\}=[K, K+M_{A_i}]$$
where $K=|\{\xi\in D: a_{\overset{\rightharpoonup}{1}}(0)(\xi)=0\}|$.

Then there exists $\sigma\in 2^{M_{A_i}}$ such that for some $j$,
 $$|\{\xi\in D: a_\sigma(0)(\xi)=0\}|=2^{\lfloor \log_2 M_{A_i}\rfloor-1}(2j+1)-1.$$
 
 By (6) and the definition of $\mc{C}^1$,   for any $k, k'<N_{A_i}$,
$$c(a_\sigma(k), b_\sigma(k'))=\lfloor \log_2 M_{A_i}\rfloor -1.$$
 This, together with (4),  shows that $\alpha<\beta$ are as desired and finishes the proof of the lemma.
\end{proof}

 The proof of Lemma \ref{iteration} shows the following.
 \begin{lem}\label{iteration1}
Suppose that $\nu$ is a limit ordinal and $\mc{P}_\nu$ is the direct limit of the finite support iteration  $\langle \mc{P}_\beta, \dot{\mc{Q}}_\beta: \beta< \nu\rangle$   of c.c.c.  posets. If for any $\beta<\nu$, $\Vdash_{\mc{P}_\beta} \varphi_1(c)$, then $\Vdash_{\mc{P}_\nu} \varphi_1(c)$.
\end{lem}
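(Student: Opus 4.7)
The plan is to mimic closely the proof of Lemma \ref{iteration}, since the combinatorial content of the reflection argument is essentially identical; only the parameters of the witnessing objects in $\mc{C}^1$ versus $\mc{C}^0$ change. I will argue by contradiction: assume some $p\in\mc{P}_\nu$ forces that $\varphi_1(c,\dot{A}_0,\dots,\dot{A}_{n-1})$ fails for names $\dot{A}_i\in\dot{\mc{C}}^1$, with failure witnessed by names $\dot{f}:\omega_1\ra S$ and $\dot{g}_i:\omega_1\ra [\dot{A}_i]^{<\omega}$. Extending $p$ if necessary, I fix $t_i,n_i,m_i$ so that $p\Vdash stem(\dot{A}_i)=t_i$, $N_{\dot{A}_i}=n_i$ and $M_{\dot{A}_i}=m_i$ for all $i<n$ (note that compared to Lemma \ref{iteration}, there is just one extra parameter $m_i$ to decide, corresponding to the exponent in $2^{M_{A_i}}$).

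Next, for each $\alpha<\omega_1$ I pick $p_\alpha\leq_{\mc{P}_\nu} p$, $s_\alpha\in S$ and $x^i_\alpha\in[\dot{A}_i]^{<\omega}$ such that $p_\alpha\Vdash \dot{f}(\alpha)=s_\alpha$ and $\dot{g}_i(\alpha)=x^i_\alpha$ for all $i<n$. Since the supports are finite, by the $\Delta$-system lemma there is $\Gamma\in[\omega_1]^{\omega_1}$ such that $\{supp(p_\alpha):\alpha\in\Gamma\}$ forms a $\Delta$-system with root $r$. Choose $\eta<\nu$ with $supp(p)\cup r\subseteq\eta$ and a $\mc{P}_\eta$-generic filter $G\ni p\up_\eta$ such that $\Gamma'=\{\alpha\in\Gamma:p_\alpha\up_\eta\in G\}$ is uncountable. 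Working in $V[G]$, thin $\Gamma'$ to $\Gamma''\in[\Gamma']^{\omega_1}$ so that $\langle s_\alpha:\alpha\in\Gamma''\rangle$ is pairwise distinct and the heights of nodes appearing in the $x^i_\alpha$'s are strictly increasing in $\alpha$; this can be arranged exactly as in Lemma \ref{iteration} because c.c.c.\ of $\mc{P}_\nu$ ensures that for each $\alpha$, only countably many $\beta$ have $\bigcup\{\bigcup x^i_\beta:i<n\}$ meeting $S\up_\alpha$.

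Setting $B_i=\bigcup\{x^i_\alpha:\alpha\in\Gamma''\}$, one checks directly from the conditions (1)--(4) in Definition \ref{defc11} that each $B_i\in\mc{C}^1_{n_i,m_i}$ in $V[G]$, with $stem(B_i)=t_i$. Since by hypothesis $V[G]\vDash\varphi_1(c)$, I apply $\varphi_1(c,B_0,\dots,B_{n-1})$ to the sequences $\langle s_\alpha:\alpha\in\Gamma''\rangle$ and $\langle x^i_\alpha:i<n,\alpha\in\Gamma''\rangle$ to get $\alpha<\beta$ in $\Gamma''$ with $s_\alpha<_S s_\beta$ and with the required $\sigma\in 2^{m_i}$ for each pair $\langle a_\tau\rangle\in x^i_\alpha,\langle b_\tau\rangle\in x^i_\beta$ satisfying $c(a_\sigma(k),b_\sigma(k'))=\lfloor\log_2 m_i\rfloor-1$ for all $k,k'<n_i$.

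Back in $V$, the conditions $p_\alpha,p_\beta$ are compatible because both restrict to elements of $G$ on $\eta$ and their supports outside $\eta$ are disjoint (as guaranteed by the $\Delta$-system). Choose $p'\leq_{\mc{P}_\nu} p_\alpha,p_\beta$; then $p'$ forces that $\alpha<\beta$ witness the conclusion of $\varphi_1(c,\dot{A}_0,\dots,\dot{A}_{n-1})$ for the assignments $\dot{f},\langle\dot{g}_i:i<n\rangle$, contradicting that $p'\leq p$ forces failure. The only nontrivial point is verifying that $B_i\in\mc{C}^1_{n_i,m_i}$ in $V[G]$, i.e.\ that clauses (1)--(3) of Definition \ref{defc11} pass from the individual elements of $x^i_\alpha$ (each of which sits inside $\dot{A}_i^{G_\nu}\in\mc{C}^1_{n_i,m_i}$ in any further generic extension) to the union $B_i$; but since every tuple in $x^i_\alpha$ already satisfies these conditions over the common stem $t_i$, and clause (4) is guaranteed by the height-separation thinning used to produce $\Gamma''$, this is routine. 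I expect no genuine obstacle beyond keeping the bookkeeping of $(n_i,m_i)$ straight.
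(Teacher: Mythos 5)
Your proposal is correct and is exactly the paper's intended argument: the paper proves this lemma by simply asserting that the proof of Lemma \ref{iteration} carries over, and your adaptation (deciding the extra parameter $M_{\dot{A}_i}$, forming the $B_i\in\mc{C}^1_{n_i,m_i}$ in $V[G]$, and running the same $\Delta$-system/compatibility contradiction) is precisely that adaptation.
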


We define the standard forcing to destroy c.c.c. of posets that destroy $\varphi_1(c)$.
\begin{defn}
Suppose that $0<n<\omega$, $A_0,..., A_{n-1}$ are in $\mc{C}^1$ and $\langle x^i_\alpha\in [A_i]^{<\omega}:\alpha<\omega_1\rangle$ is pairwise disjoint for each $i<n$. Then $\mc{Q}^1_{\langle x^i_\alpha: i<n,\alpha<\omega_1\rangle}$ is the poset consisting of all $p\in [\omega_1]^{<\omega}$ such that
\begin{itemize}
\item  for any $\alpha<\beta$ in $p$ and $i<n$, for any  $\langle a_\sigma: \sigma\in 2^{M_{A_i}}\rangle \in x^i_\alpha$ and  $\langle b_\sigma: \sigma\in 2^{M_{A_i}}\rangle\in x^i_\beta$,  there is $\sigma\in 2^{M_{A_i}}$ such that for any $k,k'<N_{A_i}$, 
$$c(a_\sigma(k), b_\sigma(k'))=\lfloor \log_2 M_{A_i} \rfloor -1.$$
\end{itemize}
The order is reverse inclusion.
\end{defn}
The proof of Lemma \ref{preservation}  shows that   $\varphi_1(c)$ is preserved by $\mc{Q}^1_{\langle x^i_\alpha: i<n,\alpha<\omega_1\rangle}$.
\begin{lem}
Assume $\varphi_1(c)$.
If $0<n<\omega$, $A_0,..., A_{n-1}$ are in $\mc{C}^1$ and $\langle x^i_\alpha\in [A_i]^{<\omega}:\alpha<\omega_1\rangle$ is pairwise disjoint for each $i<n$, then $\mc{Q}^1_{\langle x^i_\alpha: i<n,\alpha<\omega_1\rangle}$ is c.c.c. and $\Vdash_{\mc{Q}^1_{\langle x^i_\alpha: i<n,\alpha<\omega_1\rangle}} \varphi_1(c)$.
\end{lem}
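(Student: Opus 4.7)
The plan is to imitate the proof of Lemma \ref{preservation} almost verbatim, adapting only the combinatorial target from the $0$/$1$ alternation pattern of $\varphi_0$ to the ``some $\sigma\in 2^{M_A}$ with constant color $\lfloor\log_2 M_A\rfloor-1$'' pattern of $\varphi_1$. The abstract shape of the argument is unchanged: the forcing $\mc{Q}^1_{\langle x^i_\alpha: i<n,\alpha<\omega_1\rangle}$ is ``naturally compatible'' in that two conditions with disjoint nontrivial parts are compatible iff the combinatorial witness exists; hence the $\Delta$-system lemma reduces both c.c.c.\ and preservation to a single application of $\varphi_1(c)$ to appropriate sequences.

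First I would establish c.c.c. Given any uncountable $\{p_\alpha:\alpha<\omega_1\}\subset \mc{Q}^1_{\langle x^i_\alpha: i<n,\alpha<\omega_1\rangle}$, thin to an uncountable $\Gamma$ so that $\{p_\alpha:\alpha\in\Gamma\}$ forms a $\Delta$-system with root $r$ and set $y^i_\alpha=\bigcup_{\gamma\in p_\alpha\setminus r} x^i_\gamma$. Since the root contributes equally to $p_\alpha$ and $p_\beta$, $p_\alpha\cup p_\beta$ is a condition iff the $\mc{C}^1$-compatibility clause holds for every pair coming from $y^i_\alpha\times y^i_\beta$. Taking any pairwise distinct $\langle s_\alpha:\alpha\in\Gamma\rangle$ (e.g., use nodes at distinct heights of $S$), $\varphi_1(c,A_0,\dots,A_{n-1})$ applied to $\langle s_\alpha\rangle$ and $\langle y^i_\alpha\rangle$ delivers $\alpha<\beta$ in $\Gamma$ with $p_\alpha$ and $p_\beta$ compatible.

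Second I would prove the preservation clause. Suppose for contradiction that some $p\in \mc{Q}^1_{\langle x^i_\alpha: i<n,\alpha<\omega_1\rangle}$ and $\mc{Q}^1$-names $\dot B_0,\dots,\dot B_{m-1}$ in $\dot{\mc{C}}^1$, $\dot f:\omega_1\to S$, $\dot g_i:\omega_1\to[\dot B_i]^{<\omega}$ witness the failure of $\varphi_1(c,\dot B_0,\dots,\dot B_{m-1})$. Extend $p$ to decide $stem(\dot B_i)=t_i$, $N_{\dot B_i}=k_i$, $M_{\dot B_i}=m_i$, and for each $\alpha$ pick $p_\alpha\le p$ and $s_\alpha$, $\langle y^i_\alpha\rangle_{i<m}$ deciding $\dot f(\alpha)=s_\alpha$ and $\dot g_i(\alpha)=y^i_\alpha$. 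Apply the $\Delta$-system lemma to obtain $\Gamma\in[\omega_1]^{\omega_1}$ with $\{p_\alpha:\alpha\in\Gamma\}$ a $\Delta$-system with root $r$, $\langle ht(s_\alpha):\alpha\in\Gamma\rangle$ strictly increasing, and the $y^i_\alpha$ well-separated in height; then each $B'_i=\{a:a\in y^i_\alpha,\alpha\in\Gamma\}$ lies in $\mc{C}^1_{k_i,m_i}$. Setting $y^{m+i}_\alpha=\bigcup_{\gamma\in p_\alpha\setminus r} x^i_\gamma$ for $i<n$ (pairwise disjoint across $\alpha$ by the $\Delta$-system property), apply $\varphi_1(c,B'_0,\dots,B'_{m-1},A_0,\dots,A_{n-1})$ to $\langle s_\alpha:\alpha\in\Gamma\rangle$ and the concatenated sequence $\langle y^i_\alpha:i<m+n,\alpha\in\Gamma\rangle$. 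This yields $\alpha<\beta$ in $\Gamma$ such that $s_\alpha<_S s_\beta$, compatibility holds on each $B'_i$ ($i<m$), and compatibility holds on each $A_i$ ($i<n$). The latter shows $p_\alpha\cup p_\beta\in \mc{Q}^1_{\langle x^i_\alpha: i<n,\alpha<\omega_1\rangle}$ is a common extension, while the former shows this extension forces the $\varphi_1$-witness for $\dot B_0,\dots,\dot B_{m-1}$ to hold at the pair $\alpha<\beta$, contradicting the choice of $p$.

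The only nontrivial verification beyond bookkeeping is that $B'_i\in\mc{C}^1_{k_i,m_i}$, which amounts to checking Definition \ref{defc11}(1)--(4) for the collected sequences $y^i_\alpha$; clauses (1)--(3) are inherited from $\dot B_i\in\dot{\mc{C}}^1$ via the forced equalities of stems and parameters, and clause (4) is arranged by the height separation built into $\Gamma$. This is the one place where one must remember that c.c.c.\ of $\mc{Q}^1$ together with $p\Vdash rang(\dot g_i)$ pairwise disjoint is used to show that at each countable stage only countably many $\beta$ can have $y^i_\beta$ meeting $S\upharpoonright_\alpha$, allowing the height-separation refinement; this is exactly the argument in the proof of Lemma \ref{preservation} and I expect no additional difficulty.
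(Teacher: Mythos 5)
Your proposal is correct and is essentially the paper's own argument: the paper gives no separate proof here, stating only that "the proof of Lemma \ref{preservation} shows" the result, and your adaptation (including deciding the extra parameter $M_{\dot B_i}$ alongside $N_{\dot B_i}$ and $stem(\dot B_i)$, and replacing the $0$/$1$-alternation clause by the "some $\sigma\in 2^{M_{A_i}}$ of constant color $\lfloor\log_2 M_{A_i}\rfloor-1$" clause) is exactly the intended transcription.
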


Now the argument in Theorem \ref{varphi0} shows the following.
 \begin{thm}\label{varphi1}
It is consistent that there is a complete coherent Suslin tree $S$ and a coloring $c: [S]^2\ra \omega$ such that $\mathrm{MA}_{\omega_1}(S)$ and $\varphi_1(c)$ hold.
 \end{thm}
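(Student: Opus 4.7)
The plan is to run the three-step template established in Theorem \ref{varphi0}, replacing $\varphi_0(c)$ everywhere by $\varphi_1(c)$. Start with a ground model $V$ of GCH containing a complete coherent Suslin tree $S \subset 2^{<\omega_1}$ and a coloring $c: [S]^2 \to \omega$ witnessing $\mathrm{Pr}_0(S, \omega, \omega)$ (provided by Fact \ref{fact1}). By the lemma just proved in this section, $V \vDash \varphi_1(c)$. Then carry out the standard finite-support iteration $\langle \mc{P}_\alpha, \dot{\mc{Q}}_\alpha : \alpha < \omega_2 \rangle$ of length $\omega_2$ in which, at each stage, one enumerates and forces with some c.c.c.\ poset of size $\leq \omega_1$ that preserves $\varphi_1(c)$, using standard Martin-style bookkeeping so that every such poset in the final extension is treated. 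Let $G$ be generic and $\mc{P} = \mc{P}_{\omega_2}$ its direct limit.

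Preservation of $\varphi_1(c)$ along the iteration is handled by Lemma \ref{iteration1} at limit stages and by hypothesis at successor stages, so $V[G] \vDash \varphi_1(c)$ and in particular $S$ remains Suslin in $V[G]$. The substantive step is to verify $V[G] \vDash \mathrm{MA}_{\omega_1}(S)$. Given a c.c.c.\ poset $\mc{Q}$ of size $\leq \omega_1$ in $V[G]$ with $\Vdash_{\mc{Q}} S$ is Suslin, reflect $\mc{Q}$ down to some $V[G_\alpha]$; it suffices to show $V[G_\alpha] \vDash \Vdash_{\mc{Q}} \varphi_1(c)$, because then the bookkeeping at some later stage $\eta \geq \alpha$ covers $\mc{Q}$, and any family $\{D_\alpha : \alpha < \omega_1\}$ of dense sets in $\mc{Q}$ will appear in some $V[G_\eta]$ and be met by the generic for $\dot{\mc{Q}}_\eta$.

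Suppose toward contradiction that $V[G_\alpha] \vDash$ ``$\nVdash_{\mc{Q}} \varphi_1(c)$''. Mimicking the opening moves of the $\varphi_1$-analogue of Lemma \ref{preservation}, fix in $V[G_\alpha]$ a condition $p \in \mc{Q}$, $\mc{Q}$-names $\dot{B}_0, \dots, \dot{B}_{m-1}$ in $\dot{\mc{C}}^1$, $\dot{f} : \omega_1 \to S$, and $\dot{g}_i : \omega_1 \to [\dot{B}_i]^{<\omega}$ that witness the failure of $\varphi_1(c, \dot{B}_0, \dots, \dot{B}_{m-1})$; extend $p$ to decide the stems and the values of $N_{\dot{B}_i}$, $M_{\dot{B}_i}$; then for each $\alpha < \omega_1$ pick $p_\alpha \leq p$ deciding $\dot{f}(\alpha) = s_\alpha$ and $\dot{g}_i(\alpha) = y^i_\alpha$. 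Thin to an uncountable $\Gamma$ so that $\langle s_\alpha : \alpha \in \Gamma \rangle$ is pairwise distinct, the heights of the $y^i_\alpha$'s are suitably separated, and for each $i < m$ the union $A_i = \bigcup_{\alpha \in \Gamma} y^i_\alpha$ belongs to $\mc{C}^1$ with $\langle y^i_\alpha : \alpha \in \Gamma\rangle$ pairwise disjoint. The analogue of clause (5) from the proof of Theorem \ref{varphi0} then reads: whenever $\alpha < \beta$ in $\Gamma$ satisfy $p_\alpha \parallel_{\mc{Q}} p_\beta$, either $s_\alpha \not<_S s_\beta$ or there are $i < m$, $\langle a_\sigma\rangle \in y^i_\alpha$, $\langle b_\sigma\rangle \in y^i_\beta$ such that no $\sigma \in 2^{M_{\dot{B}_i}}$ realizes $c(a_\sigma(k), b_\sigma(k')) = \lfloor \log_2 M_{\dot{B}_i} \rfloor - 1$ for all $k, k' < N_{\dot{B}_i}$.

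Now invoke the $\varphi_1$ version of Lemma \ref{preservation} proved just above: the auxiliary poset $\mc{Q}^1_{\langle y^i_\alpha : i < m, \alpha \in \Gamma\rangle}$ is c.c.c.\ and preserves $\varphi_1(c)$ in every $V[G_\beta]$ for $\beta \in [\alpha, \omega_2)$; thus it is used at some stage $\eta \geq \alpha$ with an $\omega_1$-sized generic. The generic $G(\eta)$ yields an uncountable $H \subset \Gamma$ such that for every $\alpha < \beta$ in $H$, every $i < m$, every $\langle a_\sigma\rangle \in y^i_\alpha$ and $\langle b_\sigma\rangle \in y^i_\beta$, there is $\sigma \in 2^{M_{\dot{B}_i}}$ with $c(a_\sigma(k), b_\sigma(k')) = \lfloor \log_2 M_{\dot{B}_i}\rfloor - 1$ for all $k, k' < N_{\dot{B}_i}$. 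Combined with the contrapositive of the previous clause, for every $\alpha < \beta$ in $H$ with $p_\alpha \parallel p_\beta$ we must have $s_\alpha \not<_S s_\beta$; hence $\{(p_\alpha, s_\alpha) : \alpha \in H\}$ is an uncountable antichain in $\mc{Q} \times S$ (in $V[G]$), contradicting $\Vdash_{\mc{Q}} S$ is Suslin. This completes the verification and yields $\mathrm{MA}_{\omega_1}(S) + \varphi_1(c)$ in $V[G]$. The only non-routine obstacle is setting up the clause-by-clause analogue of Lemma \ref{preservation} for $\varphi_1$, but it proceeds exactly as in the $\varphi_0$ case since the definition of $\mc{C}^1$ is again closed under passing to uncountable subsets of the relevant form.
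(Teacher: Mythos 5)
Your proposal is correct and is essentially the paper's own argument: the paper proves Theorem \ref{varphi1} by simply invoking the proof of Theorem \ref{varphi0} with $\varphi_1(c)$ in place of $\varphi_0(c)$, relying on the same section's analogues of Lemma \ref{initial}, Lemma \ref{iteration} (namely Lemma \ref{iteration1}), and Lemma \ref{preservation}, exactly as you do. Your spelled-out version of the reflection/antichain step matches the paper's template step for step.
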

 
 Now forcing with $S$ over the model mentioned in the theorem above and using the lifting argument described in the beginning of the section, we conclude the following.
 \begin{cor}
$\mathrm{MA}_{\omega_1}(S)[S]$ is consistent with $\mathrm{Pr}_0(\omega_1, \omega_1, \omega)$.
\end{cor}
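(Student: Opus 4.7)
The plan is to combine Theorem \ref{varphi1} with an $S$-forcing step and then apply the standard lifting described at the beginning of this section. First I would start in a model $W$ produced by Theorem \ref{varphi1}, so that in $W$ there is a complete coherent Suslin tree $S \subset 2^{<\omega_1}$ and a coloring $c: [S]^2 \to \omega$ with $\mathrm{MA}_{\omega_1}(S)$ and $\varphi_1(c)$. Then I would force with $S$ over $W$ and let $V = W[G]$ for a generic branch $G$. By the very definition, $V \models \mathrm{MA}_{\omega_1}(S)[S]$, so the only remaining task is to verify that $\mathrm{Pr}_0(\omega_1, \omega_1, \omega)$ holds in $V$.

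For that, I would use $c$ to define an $S$-name $\dot{c}^*: [\omega_1]^2 \to \omega$ in exactly the same way as in Corollary \ref{cor}: for $\alpha < \beta < \omega_1$ and $s \in S \setminus S\upharpoonright_\beta$, set $s \Vdash_S \dot{c}^*(\alpha, \beta) = c(s\upharpoonright_\alpha, s\upharpoonright_\beta)$. Repeating the argument of Corollary \ref{cor} verbatim, but using Lemma \ref{varphi1topr1} in place of Lemma \ref{lem1}, shows that $\Vdash_S \dot{c}^*$ witnesses $\mathrm{Pr}_1(\omega_1, \omega, \omega)$. Concretely: given an $S$-name $\dot X$ of an uncountable pairwise disjoint family in $[\omega_1]^n$, a condition $t \in S$ and $k < \omega$, choose $s_\alpha >_S t$ and $a_\alpha \in [\omega_1 \setminus \alpha]^n$ with $s_\alpha \Vdash_S a_\alpha \in \dot X$ and $ht(s_\alpha) > \max a_\alpha$, thin out to get $\langle s_\alpha\rangle$ pairwise distinct and $\langle a_\alpha\rangle$ pairwise disjoint, and apply Lemma \ref{varphi1topr1} to obtain $\alpha < \beta$ with $s_\alpha <_S s_\beta$ and $c(s_\alpha\upharpoonright_{a_\alpha(i)}, s_\beta\upharpoonright_{a_\beta(j)}) = k$ for all $i, j < n$. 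Then $s_\beta \Vdash_S \dot{c}^*(a_\alpha(i), a_\beta(j)) = k$ for all $i, j < n$, which is what is needed.

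Having established $\mathrm{Pr}_1(\omega_1, \omega, \omega)$ in $V$, I would invoke the lifting recipe written at the beginning of this section. Let $d: [\omega_1]^2 \to \omega$ be a witness to $\mathrm{Pr}_1(\omega_1, \omega, \omega)$. Fix in $V$ surjections $\pi_\alpha: \omega \to \alpha$, a pairwise distinct sequence $\langle r_\alpha \in 2^\omega: \alpha < \omega_1\rangle$, and an enumeration $\{h_n: n < \omega\}$ of all $h: 2^k \times 2^k \to \omega$ with $k < \omega$. Define $d': [\omega_1]^2 \to \omega_1$ by
\[
d'(\alpha, \beta) = \pi_\beta\bigl(h_{d(\alpha, \beta)}(r_\alpha\upharpoonright_k, r_\beta\upharpoonright_k)\bigr),
\]
where $k$ is the integer with $\mathrm{dom}(h_{d(\alpha,\beta)}) = 2^k \times 2^k$. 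To check that $d'$ witnesses $\mathrm{Pr}_0(\omega_1, \omega_1, \omega)$, fix $n < \omega$, uncountable pairwise disjoint $A \subset [\omega_1]^n$ and $H: n \times n \to \omega_1$; refine $A$ so that for some $k$ the map $i \mapsto r_{a(i)}\upharpoonright_k$ is constant on $a \in A$ and takes $n$ distinct values. Pick $b \in A$ with $\min b(0) > \sup \mathrm{rang}(H)$, and choose $m < \omega$ so that $h_m(r_{b(i)}\upharpoonright_k, r_{b(j)}\upharpoonright_k) \in \pi_{b(j)}^{-1}\{H(i, j)\}$ for all $i, j < n$; then $\mathrm{Pr}_1(\omega_1, \omega, \omega)$ produces $a \in A$ below $b(0)$ with $d[a \times b] = \{m\}$, and unpacking the definition yields $d'(a(i), b(j)) = H(i, j)$ for all $i, j < n$.

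There is no serious obstacle in this argument beyond assembling the pieces carefully: Lemma \ref{varphi1topr1} is the key technical input, and it is already proved; the $S$-forcing step mirrors Corollary \ref{cor} and works because $\varphi_1(c)$ implies $S$ is Suslin and is preserved by the final $S$-extension in the sense needed for $\dot c^*$; the lifting step is entirely a ZFC computation in $V$ using CH (which holds in $V$, since $V = W[G]$ and $W \models \mathrm{CH}$ with $S$ being $\omega_1$-preserving and not adding reals). The mildest care is required in ensuring that in $W$ we truly have the hypotheses needed to invoke Theorem \ref{varphi1}, which is handled by the initial model of Fact \ref{fact1}.
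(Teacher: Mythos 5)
Your proposal is correct and follows exactly the paper's route: force with $S$ over the model of Theorem \ref{varphi1}, transfer $\varphi_1(c)$ to $\mathrm{Pr}_1(\omega_1,\omega,\omega)$ via Lemma \ref{varphi1topr1} by repeating the argument of Corollary \ref{cor}, and then apply the lifting described at the start of the section. The only blemish is your parenthetical claim that CH holds in $V$ — it does not, since $\mathrm{MA}_{\omega_1}(S)$ forces $2^{\aleph_0}>\aleph_1$ — but this is harmless because the lifting only needs surjections $\pi_\alpha:\omega\to\alpha$ and $\omega_1$ many distinct reals, which exist in ZFC.
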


We formulate  a new forcing axiom similar as MA$_{\omega_1}(S, c, \varphi_0)$.
\begin{defn}
For a complete coherent Suslin tree $S\subset 2^{<\omega_1}$ and a coloring $c: [S]^2\ra \omega$, $\mathrm{MA}_{\omega_1}(S, c, \varphi_1)$ is the assertion that
\begin{enumerate}[(i)]
\item $\varphi_1(c)$ holds;
\item if $\mc{P}$ is a c.c.c. forcing that preserves $\varphi_1(c)$ and $\{D_\alpha: \alpha<\omega_1\}$ is a collection of dense subsets of $\mc{P}$, then there is a filter meeting them all.
\end{enumerate}
\end{defn}
Say that MA$_{\omega_1}(S, c, \varphi_1)[S]$  holds if  the universe is a forcing extension by $S$ over a model of MA$_{\omega_1}(S, c, \varphi_1)$.

The argument of Proposition \ref{prop} shows the following.
\begin{prop}
For a complete coherent Suslin tree $S\subset 2^{<\omega_1}$ and a coloring $c: [S]^2\ra \omega$, the following are equivalent.
\begin{enumerate}
\item $\mathrm{MA}_{\omega_1}(S, c, \varphi_1)$.
\item $\mathrm{MA}_{\omega_1}(S)$ and $\varphi_1(c)$.
\end{enumerate}
\end{prop}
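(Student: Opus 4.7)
The plan is to transcribe the proof of Proposition \ref{prop} with $\varphi_0$ replaced by $\varphi_1$ and $\mc{Q}$ replaced by $\mc{Q}^1$, making sure that each ingredient used there has already been established for $\varphi_1$. The direction (2) $\Rightarrow$ (1) is immediate: since $\varphi_1(c)$ implies that $S$ is Suslin, any c.c.c.\ poset $\mc{P}$ that preserves $\varphi_1(c)$ automatically preserves $S$. Hence MA$_{\omega_1}(S)$ handles any such $\mc{P}$ together with the prescribed $\omega_1$-many dense sets, yielding the required filter.

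For (1) $\Rightarrow$ (2), $\varphi_1(c)$ is a clause of MA$_{\omega_1}(S, c, \varphi_1)$, so only MA$_{\omega_1}(S)$ needs proof. First I would fix a c.c.c.\ poset $\mc{P}$ with $\Vdash_\mc{P} ``S\text{ is Suslin}"$ and reduce the goal to showing $\Vdash_\mc{P} \varphi_1(c)$; granted this, $\mc{P}$ itself is a legal input for the forcing clause of MA$_{\omega_1}(S, c, \varphi_1)$, and we are done. Assume toward a contradiction that $\nVdash_\mc{P} \varphi_1(c)$. Running the argument used in Lemma \ref{preservation}/Theorem \ref{varphi0} (the cited analog for $\varphi_1$) I would extract $0<m<\omega$, tuples $A_0, \dots, A_{m-1} \in \mc{C}^1$, pairwise distinct $\langle s_\alpha : \alpha<\omega_1\rangle$ in $S$, and pairwise disjoint $\langle y^i_\alpha : \alpha<\omega_1\rangle \subseteq [A_i]^{<\omega}$ for each $i<m$, together with conditions $p_\alpha \in \mc{P}$, such that after a $\Delta$-system and stabilization cleanup, $\mc{Q}^1_{\langle y^i_\alpha : i<m,\alpha<\omega_1\rangle}$ forces its generic filter to select an uncountable subset $\Gamma$ of $\omega_1$ for which $\{(p_\alpha, s_\alpha) : \alpha \in \Gamma\}$ is an antichain in $\mc{P}\times S$.

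Now I would invoke the $\varphi_1$-analog of Lemma \ref{preservation}: $\mc{Q}^1_{\langle y^i_\alpha : i<m,\alpha<\omega_1\rangle}$ is c.c.c.\ and preserves $\varphi_1(c)$. It is therefore an admissible poset for MA$_{\omega_1}(S, c, \varphi_1)$, and applying the axiom to the natural $\omega_1$-many dense sets ensuring $\Gamma$ is uncountable produces in $V$ an uncountable $\Gamma \subseteq \omega_1$ with $\{(p_\alpha, s_\alpha) : \alpha \in \Gamma\}$ pairwise incompatible in $\mc{P}\times S$. This contradicts the assumption that $\mc{P}$ is c.c.c.\ and $\Vdash_\mc{P} S$ is Suslin (equivalently, $\mc{P}\times S$ is c.c.c.), and completes the implication.

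Nothing in this argument is delicate beyond noting that the two black boxes used---the failure-of-$\varphi_1$ analog of the extraction step in Theorem \ref{varphi0}, and the preservation-of-$\varphi_1$ lemma for $\mc{Q}^1$---have both been set up in this section; the only mild obstacle is to verify that the $\Delta$-system and separation bookkeeping in the extraction step goes through with the slightly more elaborate combinatorics of $\mc{C}^1$ (trees indexed by $2^{M_{A_i}}$ rather than pairs), but this is exactly what the preceding $\varphi_1$-versions of the iteration and preservation lemmas were designed to accommodate.
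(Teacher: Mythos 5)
Your proposal is correct and follows exactly the route the paper takes: the paper's proof of this proposition is literally ``the argument of Proposition \ref{prop} shows the following,'' i.e.\ transcribe the $\varphi_0$ argument with $\mc{C}^1$, $\mc{Q}^1$ and the already-established $\varphi_1$-analogues of the extraction and preservation lemmas. Both directions are handled as in the paper, so there is nothing to add.
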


\begin{cor}
\begin{enumerate}
\item $\mathrm{MA}_{\omega_1}(S, c, \varphi_1)$ is consistent.
\item $\mathrm{MA}_{\omega_1}(S, c, \varphi_1)[S]$ implies $\mathrm{MA}_{\omega_1}(S)[S]$ and $\mathrm{Pr}_0(\omega_1, \omega_1, \omega)$.
\end{enumerate}
\end{cor}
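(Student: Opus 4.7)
The plan is to deduce both clauses directly from results already in place, paralleling the analogous corollary after Proposition \ref{prop} in Section 4.

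For clause (1), I would simply combine Theorem \ref{varphi1} with the preceding Proposition. Theorem \ref{varphi1} gives a model in which there is a complete coherent Suslin tree $S\subset 2^{<\omega_1}$ and a coloring $c:[S]^2\to\omega$ such that $\mathrm{MA}_{\omega_1}(S)$ and $\varphi_1(c)$ both hold. The Proposition (the $\varphi_1$ analogue of Proposition \ref{prop}) then tells us that this is equivalent to $\mathrm{MA}_{\omega_1}(S,c,\varphi_1)$, so we are done.

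For clause (2), assume $\mathrm{MA}_{\omega_1}(S,c,\varphi_1)[S]$, i.e.\ the universe $V[b]$ is the extension by the generic branch $b$ of $S$ over some ground model $V$ satisfying $\mathrm{MA}_{\omega_1}(S,c,\varphi_1)$. The first conclusion, $\mathrm{MA}_{\omega_1}(S)[S]$, is immediate, since by the Proposition $V\models\mathrm{MA}_{\omega_1}(S)$ and $\mathrm{MA}_{\omega_1}(S)[S]$ is by definition exactly forcing with $S$ over such a model. For the second conclusion, I would use $c$ to manufacture a witness to $\mathrm{Pr}_1(\omega_1,\omega,\omega)$ in $V[b]$: define the $S$-name $\dot c^*:[\omega_1]^2\to\omega$ by $s\Vdash_S\dot c^*(\alpha,\beta)=c(s\!\up_\alpha,s\!\up_\beta)$ for $\alpha<\beta$ and $s\in S\setminus S\!\up_\beta$, exactly as in Corollary \ref{cor}. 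Running the argument of Corollary \ref{cor} but invoking Lemma \ref{varphi1topr1} (which uses $\varphi_1(c)$ in $V$ to produce, for any $k<\omega$, an $S$-increasing pair $s_\alpha<_S s_\beta$ with $c(s_\alpha\!\up_{a_\alpha(i)},s_\beta\!\up_{a_\beta(j)})=k$) in place of Lemma \ref{lem1}, shows that $\dot c^*$ is forced to witness $\mathrm{Pr}_1(\omega_1,\omega,\omega)$ in $V[b]$.

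Finally, to upgrade $\mathrm{Pr}_1(\omega_1,\omega,\omega)$ to $\mathrm{Pr}_0(\omega_1,\omega_1,\omega)$, I would just apply the lifting procedure sketched verbatim at the beginning of Section 5 inside $V[b]$: fix surjections $\pi_\alpha:\omega\to\alpha$, pairwise distinct reals $\langle r_\alpha:\alpha<\omega_1\rangle$, and an enumeration $\{h_n:n<\omega\}$ of the finite auxiliary functions, then set $d'(\alpha,\beta)=\pi_\beta(h_{d(\alpha,\beta)}(r_\alpha\!\up_k,r_\beta\!\up_k))$, where $d$ is a witness to $\mathrm{Pr}_1(\omega_1,\omega,\omega)$ derived from $\dot c^*$ and $k$ is determined by $d(\alpha,\beta)$. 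Since that lifting is already carried out in the preamble to this section, there is essentially no new work. There is no genuine obstacle here: the only thing to be careful about is that the lifting argument and Lemma \ref{varphi1topr1} are used in $V[b]$ rather than $V$, but both transfer without change because $\varphi_1(c)$ is preserved by $S$ (as $\varphi_1(c)$ persists in any extension that does not add reals or destroy $S$, and in particular Lemma \ref{varphi1topr1} yields the forcing statement via $\dot c^*$ uniformly).
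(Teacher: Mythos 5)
Your proposal is correct and is essentially the paper's own (implicit) argument: clause (1) is Theorem \ref{varphi1} plus the $\varphi_1$ analogue of Proposition \ref{prop}, and clause (2) is obtained by rerunning Corollary \ref{cor} with Lemma \ref{varphi1topr1} in place of Lemma \ref{lem1} and then applying the lifting from $\mathrm{Pr}_1(\omega_1,\omega,\omega)$ to $\mathrm{Pr}_0(\omega_1,\omega_1,\omega)$ sketched at the start of Section 5. Your closing worry about transferring $\varphi_1(c)$ to $V[b]$ is a non-issue: as in Corollary \ref{cor}, Lemma \ref{varphi1topr1} is applied in the ground model to the reflected families $(s_\alpha,a_\alpha)$, and the lifting is a ZFC argument carried out in $V[b]$ from any witness of $\mathrm{Pr}_1(\omega_1,\omega,\omega)$ there.
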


\bibliographystyle{plain}

\end{document}